\newtheorem{theorem}{Theorem}[section]
\newtheorem{lemma}[theorem]{Lemma}
\newtheorem{corollary}[theorem]{Corollary}
\theoremstyle{definition}
\newtheorem{definition}[theorem]{Definition}
\theoremstyle{remark}
\newtheorem{remark}[theorem]{Remark}
\numberwithin{equation}{section}
\newcommand{\bx}{\mathbf{x}}
\newcommand{\bq}{\mathbf{q}}
\newcommand{\bu}{\mathbf{u}}
\newcommand{\bfu}{\mathbf{u}}
\newcommand{\bff}{\mathbf{f}}
\newcommand{\vu}{\mathbf{u}}
\newcommand{\bfpsi}{\boldsymbol{\psi}}
\newcommand{\vc}[1]{{\bf #1}}
\newcommand{\dq}{\, \mathrm{d} \mathbf{q}}
\newcommand{\dx}{\, \mathrm{d} \mathbf{x}}
\newcommand{\dt}{\, \mathrm{d}t}
\newcommand{\ds}{\, \mathrm{d}\sigma}
\newcommand{\dif}{\, \mathrm{d}}
\newcommand{\D}{\, \mathrm{d}}
\newcommand{\Q}{\mathcal{Q}}
\newcommand{\Div}{\mathrm{div}_{\mathbf{x}}}
\newcommand{\divx}{\mathrm{div}_{\mathbf{x}}}
\newcommand{\divq}{\mathrm{div}_{\mathbf{q}}}
\newcommand{\nabx}{\nabla_{\mathbf{x}}}
\newcommand{\nabq}{\nabla_{\mathbf{q}}}
\newcommand{\Delx}{\Delta_{\mathbf{x}}}
\newcommand{\Grad}{\nabla_{\mathbf{x}}}
\newcommand{\R}{\mathbb{R}}
\newcommand{\mr}{\mathbb{R}}
\newcommand{\mt}{\mathcal{T}^d}
\newcommand{\tor}{\mathcal{T}^d}
\newcommand{\inttO}{\int_{\Q_t}}
\begin{document}

\title
{Local well-posedness of the compressible FENE dumbbell model of Warner type}

\author{Dominic Breit}
\address{Department of Mathematics, Heriot-Watt University, Edinburgh, EH14 4AS, United Kingdom}
\email{d.breit@hw.ac.uk}


\author{Prince Romeo Mensah}
\address{Gran Sasso Science Institute, Viale F. Crispi, 7. 67100 L'Aquila, Italy}
\email{romeo.mensah@gssi.it}

\subjclass[2010]{76Nxx; 76N10; 35Q30 ; 35Q84; 82D60}

\date{\today}


\keywords{Compressible Navier--Stokes--Fokker--Planck system, FENE dumbbell, Polymer molecules}

\begin{abstract}
We consider a dilute suspension of dumbbells joined by a finitely extendible nonlinear elastic (FENE) connector evolving under the classical  Warner potential $U(s)=-\frac{b}{2} \log(1-\frac{2s}{b})$, $s\in[0,\frac{b}{2})$. The solvent under consideration is modelled by the compressible Navier--Stokes system  defined on the torus $\mathbb{T}^d$ with $d=2,3$ coupled with the Fokker--Planck equation (Kolmogorov forward equation) for the probability density function of the dumbbell configuration. We prove the existence of a  unique local-in-time solution to the coupled system where this solution is smooth in the spacetime variables and interpreted weakly in the elongation variable.
 Our result holds true independently of whether or not the centre-of-mass diffusion term is incorporated in the Fokker--Planck equation.
\end{abstract}

\maketitle

\section{Introduction}

The interactions of polymer molecules and fluids are of great importance in many areas of applied sciences. Polymeric fluid analysis also has various practical applications including performances in industrial and household items such as paints, lubricants, plastics and in the processing of food stuff, see \cite{bird1987dynamics}.
\\
A common mathematical model to describe the behaviour of such complex fluids is the FENE dumbbell model introduced by Warner \cite{warner1972kinetic}, where the polymer molecules are
idealized as a bead-spring chain with a finitely extensible nonlinear elastic (FENE) type spring potential. Two beads are connected by a spring which is represented by a vector
$\bq\in B$, where $B\subset\R^d$, $d=2,3$ is a ball with radius $\sqrt{b}$ around the origin.\\
On the mesoscopic level, we describe the evolutionary changes in the distribution of the dumbbell configuration  by the Fokker--Planck equation for the polymer density function $\psi =\psi(t, \mathbf{x},\mathbf{q})$ (depending on time $t\geq0$, spatial position $\bx\in\R^d$ and the prolongation vector $\bq\in B$ of the spring).
On the macroscopic level, we consider a viscous isentropic fluid described by the compressible
Navier--Stokes equations for the fluid velocity $\bu=\bu(t,\bx)$ and density $\varrho=\varrho(t,\bx)$. The beads of the dumbbells, which model the monomers that join to form a polymer chain, unsettle the flow field around the dumbbells once immersed in the fluid.
These mesoscopic effects of the polymer molecules on the fluid motion are described by an elastic stress tensor $\mathbb{T}$. It is meant to describe the random movements of polymer chains/springs and can be modelled using the \textit{spring potential} $ U$. The potential $U$ is unbounded on the interval $[0,b/2)$,  $ U (0)=0$ and belongs to the class $ U \in C^1\big([0,b/2); \mathbb{R}_{\geq0}\big)$.  The \textit{elastic spring force} $\mathbf{F}:B \subset \mathbb{R}^d \rightarrow \mathbb{R}^d$ and associated \textit{Maxwellian} $M$ are defined by
\begin{align}
\label{elasticSpringForce}
\mathbf{F} (\mathbf{q}) = \nabq U\Big(\frac{1}{2}\vert \mathbf{q} \vert^2 \Big) = U'\Big(\frac{1}{2}\vert \mathbf{q} \vert^2 \Big) \mathbf{q}
\end{align}
and
\begin{align}
\label{maxwellian}
M(\mathbf{q}) = \frac{e^{-U \left(\frac{1}{2}\vert \mathbf{q} \vert^2 \right) }}{\int_Be^{-U \left(\frac{1}{2}\vert \mathbf{q} \vert^2 \right) }\,\mathrm{d}\mathbf{q}}
\end{align}
respectively such that $\int_B M(\mathbf{q})\,\mathrm{d}\mathbf{q}=1$. 
Several of such models are proposed in the literature, see for example \cite[Table 11.5-1]{bird1987dynamics}. We will concentrate our attention on the following \textit{spring potential} introduced by Warner \cite{warner1972kinetic}
\begin{align*}
 U (s) = -\frac{b}{2} \log \bigg(1-  \frac{2s}{b} \bigg), \quad s\in [0,b/2), \quad b>2,
\end{align*}
or equivalently, given \eqref{elasticSpringForce}, the following \textit{elastic spring force}
\begin{align}
\mathbf{F} (\mathbf{q} ) = \frac{b\,\mathbf{q}}{b-\vert \mathbf{q}\vert^2}
\end{align}
for $\mathbf{q}\in B$ so that in particular, $  \vert \mathbf{q} \vert^2 \neq b$. The choice of the above potential or spring force reflects its conformity with physical applications unlike other unrealistic models such as the Hookean dumbbell and Hookean bead-spring models which assume arbitrary large extensions of their polymer chains, see for instance \cite{chemin2001about, lions2000global, renardy1991existence}.\\
For simplicity of the presentation, we focus on periodic boundary conditions in space, hence
the underlying domain can be identified with the flat torus $\mt\subset\R^d$. If $\psi =\psi(t, \mathbf{x},\mathbf{q})$ is the \textit{probability density function} of the polymer depending on time $t\geq0$, spatial position $ \mathbf{x}\in\tor$
and of the \textit{elongation /conformation vector} $\mathbf{q} \in \mathbb{R}^{d}$, then the elastic stress tensor $\mathbb{T}$ is given by
\begin{align}
\label{extraStreeTensor}
\mathbb{T}(\psi) 
=
 \int_B \psi(t, \mathbf{x},\mathbf{q})U'\Big(\frac{1}{2}\vert \mathbf{q} \vert^2 \Big) \mathbf{q} \otimes\mathbf{q} \dq.
\end{align}
Using \eqref{elasticSpringForce}, we can rewrite \eqref{extraStreeTensor} as
\begin{align}
\label{extraStreeTensor1}
\mathbb{T}(\psi) = 
 \int_B \psi(t, \mathbf{x},\mathbf{q})\, \mathbf{F}(\mathbf{q}) \otimes\mathbf{q} \dq.
\end{align}
The stress tensor \eqref{extraStreeTensor} or \eqref{extraStreeTensor1} encodes the relationship between the rheological behaviour and fluid dynamics. In particular, it elucidates how  the polymers - described by the force law for the spring - are transmitted through the fluid.
\\
In addition to the surface forces induced by
the viscous stress tensor (whose precise definition we shall introduce shortly) as well as the elastic stress tensor $\mathbb T$, we consider an external volume force  $\mathbf{f}:(t, \mathbf{x} )\in \mathbb{R}_{\geq 0} \times \tor \mapsto  \mathbf{f}(t, \mathbf{x}) \in \mathbb{R}^d$ in the fluid motion. This force may account for the influence of gravity and/or electric force as well as artificial forces produced by for example, an ultracentrifuge. \\
The coupled system is now described by the compressible Navier--Stokes--Fokker-Planck system.
We wish to find the fluid's density $\varrho:(t, \mathbf{x})\in\mathbb{R}_{\geq 0}\times \tor \mapsto  \varrho(t, \mathbf{x}) \in \mathbb{R}_{>0}$, the fluid's velocity field $\mathbf{u}:(t, \mathbf{x})\in \mathbb{R}_{\geq 0} \times \tor \mapsto  \mathbf{u}(t, \mathbf{x}) \in \mathbb{R}^d$ and the probability density function $\psi:(t, \mathbf{x}, \mathbf{q})\in\mathbb{R}_{\geq 0} \times \tor \times B \mapsto \psi(t, \mathbf{x}, \mathbf{q}) \in \mathbb{R}_{\geq0}$
such that the equations
\begin{align}
\partial_t \varrho + \mathrm{div}_ \mathbf{x}(\varrho \mathbf{u})=0,
\label{contEq}
\\
\partial_t(\varrho \mathbf{u}) +\mathrm{div}_ \mathbf{x} (\varrho\mathbf{u}\otimes \mathbf{u}) + \nabx   p(\varrho) = \divx   \mathbb{S}(\nabx \mathbf{u}) +\divx   \mathbb{T}(\psi) +\varrho \mathbf{f},
\label{momEq}
\\
\partial_t \psi + \divx  (\mathbf{u} \psi) 
=
\varepsilon \Delx \psi
-
 \divq  \big( (\nabx   \mathbf{u}) \mathbf{q}\psi \big) 
 +
\frac{A_{11}}{4\lambda}  \divq  \bigg( M \nabq  \bigg(\frac{\psi}{M} \bigg)
\bigg)
\label{fokkerPlank}
\end{align}
are satisfied pointwise a.e. in $\mathbb{R}_{\geq 0} \times \tor \times B$ subject to the following initial and boundary conditions
\begin{align}
&\bigg[\frac{A_{11}}{4\lambda}    M \nabq  \frac{\psi}{M} -(\nabx   \mathbf{u}) \bq \psi
 \bigg] \cdot \frac{\bq}{\vert \bq \vert} =0
&\quad \text{on }  \mathbb{R}_{> 0}\times \tor \times \partial B,
\label{fokkerPlankBoundary}
\\
&(\varrho, \mathbf{u})\vert_{t=0} =(\varrho_0, \mathbf{u}_0)
&\quad \text{in }  \tor,
\label{initialDensityVelo}
\\
&\psi \vert_{t=0} =\psi_0 \geq 0
& \quad \text{in } \tor \times B.
\label{fokkerPlankIintial}
\end{align}
The momentum equation is complemented by
Newton's rheological law
\begin{align}
\label{visStressTensor0}
\mathbb{S}(\nabx \mathbf{u}) = \mu^S \bigg( \nabx   \mathbf{u} +\nabx  ^T \mathbf{u} -\frac{2}{d} \divx   \mathbf{u} \, \mathbb{I}  \bigg) +\mu^B \divx   \mathbf{u} \, \mathbb{I}
\end{align}
with shear viscosity $\mu^S>0$ and bulk viscosity $\mu^B\geq0$; as well as the adiabatic pressure law
\begin{align}
\label{isentropicPressure}
p(\varrho) = a\varrho^\gamma, \quad a>0, \gamma> 1.
\end{align}
The parameter $A_{11} >0$ in \eqref{fokkerPlank} is the first component of the  symmetric positive definite \textit{Rouse matrix} or \textit{connectivity matrix} $\mathbb{A}=A_{ij}$ for polymer chains, see \cite{rouse1953theory}. The Rouse matrix describes the network of monomers that combine to form a polymer chain. A detailed analysis of this matrix can be found in \cite{gordon1979riemann, nitta1999graph}. The non-dimensional parameter $\lambda > 0$ is the \textit{Deborah number} $\mathrm{De}$ and the parameter $\varepsilon\geq 0$ is the centre-of-mass diffusion coefficient. The Deborah number, introduced by Reiner \cite{reiner1964deborah}, measures the time it takes a material to be restored to an equilibrium state following a disturbance and the time it takes to observe the aforementioned material.
 Most models in the literature usually ignore the centre-of-mass diffusion term $\varepsilon\Delta_\bx \psi$ in \eqref{fokkerPlank}. However, an alternative school of thought, see \cite{barrett2007existence, degond2009kinetic, schieber2006generalized}, gives justifications for the inclusion of this term. We do not want to enter this discussion. Instead, we provide an approach which is suitable for both cases.\\
Existence of a solution to the Fokker--Planck equation for a given solenoidal velocity field incorporating the center-of-mass diffusion term has been established by El-Kareh and Leal \cite{el1989existence} independently of the Deborah number. 
The incompressible Navier--Stokes--Fokker--Planck system (when the time evolution of the fluid is described by the incompressible Navier--Stokes equations) for polymeric fluids including centre-of-mass diffusion (the case $\varepsilon>0$) has been studied considerably. See for example, the works by Barrett, Schwab \& S\"uli \cite{barrett2005existence}, Barrett \& S\"uli \cite{ barrett2007existence, barrett2008existence, barrettSuli2011existence, barrett2012existenceMMMAS, barrett2012existenceJDE}. 
All these results derive global-in-time weak solutions for variations of the incompressible Navier--Stokes equation coupled with the Fokker--Planck equation. On the other hand, a unique local-in-time strong solution for the centre-of-mass system was first shown to exist by Renardy \cite{renardy1991existence}. Unfortunately, \cite{renardy1991existence} excludes the physically relevant FENE dumbbell models. The local theory was then revisited by Jourdain, Leli\`evre \& Le Bris \cite{jourdain2004existence} for the stochastic FENE model for the simple Couette flow
and by E, Li \& Zhang \cite{e2004well} who analysed the  incompressible Navier--Stokes equation coupled with a system of SDEs describing the configuration of the spring (rather than the Fokker--Planck equation
for their probability distribution). The corresponding deterministic system (the incompressible Navier--Stokes equations coupled with the Fokker--Planck equation) was studied by Li, Zhang \& Zhang \cite{li2004local} and Zhang \& Zhang \cite{zhang2006local}. Constantin proved the existence of Lyapunov functionals and smooth solutions in \cite{constantin2005nonlinear} and then derived global-in-time strong solution for the 2-D system in \cite{constantin2007regularity} together with Fefferman, Titi \& Zarnescu. \\
If $\varepsilon=0$, the analysis is significantly harder since \eqref{fokkerPlank} becomes a degenerate parabolic equation which behaves like an hyperbolic equation in the spacetime $(t,\bx)$-variable.  A global weak solution result to the incompressible Navier--Stokes--Fokker--Planck system for the FENE dumbbell model without centre-of-mass diffusion
has recently been achieved in the seminal paper \cite{masmoudi2013global} by Masmoudi.  The main difficulty is to pass to the limit in the term $\divq  \big( (\nabx   \mathbf{u}) \mathbf{q}\psi \big) $ on the right-hand side of \eqref{fokkerPlank} which does not have any obvious compactness properties.
Earlier global weak solution results when $\varepsilon=0$ include the work by Lions \& Masmoudi \cite{lions2000global} for Oldroyd models,  Lions \& Masmoudi \cite{ lions2007global} who study the corotational case, and Otto \& Tzavaras \cite{otto2008continuity} who study weak solutions for the stationary system.
Masmoudi \cite{masmoudi2008well} also constructed a local-in-time strong solution to the incompressible Navier--Stokes--Fokker--Planck system for the FENE dumbbell model without centre-of-mass diffusion
in \cite{masmoudi2008well}. Furthermore, the solution is global near equilibrium, see also  Kreml \& Pokorn\'y \cite{klainerman1981singular}. The corresponding result of \cite{masmoudi2008well} in Besov spaces is shown by  Luo \& Yin \cite{luo2017global}.
\\
There are a few results in the compressible case. An extensive analysis in the 3D and 2D case has been performed by Barrett \& S\"uli \cite{barrett2016existenceA, barrett2016existence} respectively with constant viscosity coefficients and by Feireisl, Lu \& S\"uli \cite{feireisl2016dissipative} with variable viscosity coefficients.
However, all of them are concerned with the existence of weak solutions for the problem with centre-of-mass diffusion. Related results include the work  by Barrett \& S\"uli \cite{barrett2018existenceFENEP} for the FENE-P model and by Barrett, Lu \& S\"uli \cite{barrett2017existenceOldroyd} for the Oldroyd-B model.
We are not aware of any results on strong solutions.
Also, there are no results for the compressible system without centre-of-mass diffusion. We close both gaps in this paper and prove the existence of a unique local-in-time  solution to \eqref{contEq}--\eqref{fokkerPlank} under the assumption that $\varepsilon\geq0$ (see Theorem \ref{thm:main0} in the next section for the precise statement). This solution is classically strong in the spacetime variables and interpreted weakly in the elongation variable.
\\
The strategy of our proof works as follows. Inspired by \cite{breit2018local}, we rewrite the compressible Navier--Stokes equations - 
by dividing the momentum equation \eqref{momEq} by the density - into a symmetric hyperbolic system perturbed by partial viscosity. For a given elastic potential, we derive higher order energy estimates for the velocity and density (see Section \ref{sec:fluid}; in particular Theorems \ref{thm:main} and \ref{thm:main'}).
On the other hand, we derive estimates for the polymer density function if the density and viscosity of the fluid are given (see Section \ref{sec:FP}; in particular Theorems \ref{thm:mainFP} and \ref{thm:main'FP}).
This is significantly more complicated than the analysis of the incompressible Fokker--Planck equation 
in \cite{masmoudi2008well}. In particular, since our fluid is compressible, the solution of the Fokker--Planck equation is no longer transported by the Lagrangian flow as was the case in \cite{masmoudi2008well}. As such, it is of little use, if at all, to lift and study the Fokker--Planck equation from the Eulerian description to the Lagrangian description. We therefore solve the  Fokker--Planck equation  in its entirety in the Eulerian framework. To close the fluid and kinetic equations, we use a fixed-point argument. As is now widely known in contraction arguments, we are faced with the hitherto interesting twist where after showing boundedness of the fixed point map in the natural space, one is unable to show the contraction property in the same space. Indeed, we perform a difference estimate (for two solutions of the Fokker-Planck equation in terms of the given density and velocity of the fluid, cf. Theorem \ref{thm:main'FP}) in a weaker space with lower norms. When combined with the difference estimate for the fluid system, this suffices to obtain the solution we are looking for. 
This completes the proof when the fluid system is in its symmetric hyperbolic form. A transformation in density then yields a solution to the original coupled system.
Finally, as a consequence of the proof of our main theorem, a blow-up criterion is presented in Corollary \ref{cor:blowup}.
%

\section{Preliminaries and main result}
\label{sec:prelim}
In this section, we fix the notation, collect some preliminary materials (on function spaces) and present the main result.
\subsection{Notations}
We will primarily deal with three independent variables: $t\geq 0$, $\mathbf{x}\in \tor$ and $\mathbf{q}\in B$. Here, $\tor$ is the flat torus in $\mathbb{R}^d$ with $d\in \{2,3\}$ and  $B:=B(\bm{0},\sqrt{b})\subset \mathbb{R}^{d}$ is a bounded open ball of radius $\sqrt{b} \in \mathbb{R}_{>0}=(0,\infty)$ centred at $\bm{0} \in \mathbb{R}^{d}$. Time and space variables are represented by $t$ and $\mathbf{x}$ respectively whereas  $\mathbf{q} \in B $ is the \textit{elongation /conformation vector} of a polymer molecule. The spacetime cylinder $(0,t)\times \tor$ will sometimes be denoted as $\Q_t$. For functions $F$ and $G$ and a variable $p$, we write $F \lesssim G$ and $F \lesssim_p G$ if there exists  a generic constant $c>0$ and another such constant $c(p)>0$ which now depends on $p$ such that $F \leq c\,G$ and $F \leq c(p) G$ respectively.\\
By $L^p_\bx := L^p(\tor)$ and $W^{s,p}_\bx := W^{s,p}(\tor)$ for $1\leq p\leq\infty$ and $s\in\mathbb N$, we denote the standard Lebesgue and Sobolev spaces for functions with periodic boundary conditions.
For a separable Banach space $(X,\|\cdot\|_X)$, we denote by $L^p(0,T;X)$ the space of Bochner-measurable functions $u:(0,T)\rightarrow X$ such that $\|u\|_X\in L^p(0,T)$.
Finally, 
$C([0,T];X)$ is the set of continuous functions $u:[0,T]\rightarrow X$.
  
 \subsection{Function spaces}
Let us recall some Moser-type inequalities whose  proofs can be found in the appendix of \cite{klainerman1981singular}.
let $\alpha=(\alpha_1, \ldots, \alpha_d)$ be a $d$-tuple multi-index of nonnegative integers $\alpha_i$ such that $\vert \alpha \vert = \alpha_1 + \ldots +\alpha_d\leq s$ for a nonnegative integer $s\geq0$. 
\begin{itemize}
\item For any $u,v \in W^{s,2}(\tor) \cap L^\infty(\tor)$, we have that
\begin{align}
\Vert \partial^\alpha_\bx(uv)  \Vert_{L^2_\bx}
\lesssim_{s,d}
\Vert u \Vert_{L^\infty_\bx}
\Vert \nabx^s v \Vert_{L^2_\bx}
+
\Vert v \Vert_{L^\infty_\bx}
\Vert \nabx^s u \Vert_{L^2_\bx}.
\end{align}
\item For any $u \in W^{s,2}(\tor) ,\, \nabx u \in L^\infty(\tor)$ and any $v \in W^{s-1,2}(\tor) \cap L^\infty(\tor)$, we have that
\begin{align}
\label{commutatorEstimate}
\Vert \partial^\alpha_\bx(uv) 
-
u\partial^\alpha_\bx v \Vert_{L^2_\bx}
\lesssim_{s,d}
\Vert \nabx u \Vert_{L^\infty_\bx}
\Vert \nabx^{s-1} v \Vert_{L^2_\bx}
+
\Vert v \Vert_{L^\infty_\bx}
\Vert \nabx^s u \Vert_{L^2_\bx}.
\end{align}
\item If $1 \leq \vert \alpha \vert \leq 2$, then for any $u \in W^{s,2}(\tor) \cap C(\tor)$ and an $s$-times continuously differentiable function $F$ on an open neighbourhood of  a compact set $G =\mathrm{range}[u]$, we also have
\begin{align}
\label{commutatorEstimateContinuous}
\Vert \partial^\alpha_\bx F(u)  \Vert_{L^2_\bx}
\lesssim_{s,d}
\Vert \partial_u F \Vert_{C^{s-1}(G)}
\Vert u \Vert_{L^\infty_\bx}^{\vert \alpha \vert -1} \,
\Vert \partial_\bx^\alpha u\Vert_{L^2_\bx}.
\end{align}
\end{itemize} 
%
We now define a couple of weighted spaces for functions depending on the conformation vector. For the real-valued Maxwellian $M>0$, whose precise definition is given by \eqref{maxwellian}, and $p\geq1$, we denote by
\begin{align*}
&L^ p _M(B)  =  \big\{ f\in L^1_{\mathrm{loc}}(B) \, : \, \Vert f \Vert_{L^p_M(B)}^p < \infty \big\}, 
\quad
&H^1_M(B) =  \big\{ f\in L^1_{\mathrm{loc}}(B) \, : \, \Vert f \Vert_{H^1_M(B)}^2 < \infty \big\}
\end{align*}
the Maxwellian-weighted $L^p $ and $H^1$ spaces over $B$ with norms
\begin{align*}
\Vert f \Vert_{L^ p _M(B)}^p :=  \int_B M \Big\vert  \frac{f}{M} \Big\vert^ p  \dq  
\quad \quad \text{and} \quad \quad
\Vert f \Vert_{H^1_M(B)}^2 :=  \int_B M \Big\vert \nabq   \frac{f}{M} \Big\vert^2  \dq  
\end{align*}
respectively.
The following crucial lemma is originally due to \cite{masmoudi2008well}.
\begin{lemma}\label{lem:A1}
For every $\delta>0$, there exists $c_\delta>0$ such that 
\begin{align*}
\bigg( \int_B \frac{\vert \psi \vert}{1-\vert \mathbf{q} \vert}\dq  \bigg)^2
\leq
\delta
 \int_B M \bigg\vert \nabq  \frac{\psi }{M}\bigg\vert^2\dq  
 + c_\delta
  \int_B M \bigg\vert  \frac{\psi }{M} \bigg\vert^2\dq
\end{align*}
for all $\psi\in H^1_M(B)$.
\end{lemma}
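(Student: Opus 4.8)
The plan is to reduce the estimate to a one-dimensional Hardy-type inequality near the boundary $\partial B$, where the Maxwellian $M$ degenerates. Recall that for the Warner potential $U(s)=-\tfrac b2\log(1-\tfrac{2s}{b})$ we have $e^{-U(\frac12|\bq|^2)}=(1-|\bq|^2/b)^{b/2}$, so that $M(\bq)\sim(1-|\bq|)^{b/2}$ up to a positive multiplicative constant as $|\bq|\to\sqrt b$ (using $1-|\bq|^2/b = (1-|\bq|/\sqrt b)(1+|\bq|/\sqrt b)$ and $b>2$). Writing $\phi:=\psi/M$, the left-hand side is $\big(\int_B \tfrac{M|\phi|}{1-|\bq|}\dq\big)^2$ and the right-hand side controls $\delta\int_B M|\nabq\phi|^2\dq+c_\delta\int_B M|\phi|^2\dq$. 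So the claim is a weighted Poincaré/Hardy inequality: the weight $M/(1-|\bq|)$ in $L^1$ is dominated by the $H^1_M$-norm modulo an $L^2_M$-term with a small constant in front of the gradient.

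First I would split the integral over $B$ into the ``bulk'' region $\{|\bq|\le 1-\kappa_0\}$ for a fixed small $\kappa_0>0$ depending only on $b,d$, and the ``boundary layer'' $\{1-\kappa_0<|\bq|<\sqrt b\}$ (note $\sqrt b>1$ since $b>2$, so $1$ is an interior radius and the singular factor $1/(1-|\bq|)$ is genuinely problematic only near $|\bq|=1$, which is \emph{inside} $B$; this is harmless since there $M$ is bounded above and below). On the bulk region, $1/(1-|\bq|)$ is bounded and $M$ is comparable to a constant, so Cauchy--Schwarz gives $\big(\int_{\text{bulk}}\tfrac{M|\phi|}{1-|\bq|}\big)^2\lesssim \int_B M|\phi|^2\dq$, which is absorbed into the $c_\delta$-term. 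For the boundary layer one passes to polar coordinates $\bq=r\omega$, $r\in(1-\kappa_0,\sqrt b)$, $\omega\in S^{d-1}$, and reduces to the radial Hardy inequality: with $w(r):=(1-r^2/b)^{b/2}r^{d-1}$ one needs $\big(\int w(r)\tfrac{|\phi|}{1-r}\,dr\big)^2\le \delta\int w(r)|\partial_r\phi|^2\,dr+c_\delta\int w(r)|\phi|^2\,dr$ for each fixed $\omega$, then integrate over $S^{d-1}$ and use Cauchy--Schwarz in $\omega$. To obtain this, write $\phi(r)=\phi(1-\kappa_0)+\int_{1-\kappa_0}^r\partial_s\phi(s)\,ds$; the constant part is controlled by the $L^2_M$ norm on the bulk (a trace-type bound, via averaging the identity over an interval of $r$), and for the integral part one uses the standard trick $\int_{1-\kappa_0}^r|\partial_s\phi|\,ds\le\big(\int_{1-\kappa_0}^r w(s)|\partial_s\phi|^2 ds\big)^{1/2}\big(\int_{1-\kappa_0}^r w(s)^{-1}ds\big)^{1/2}$, observing that $\int_{1-\kappa_0}^{\sqrt b} w(s)^{-1}ds=\infty$ when $b\ge2$, so one cannot be this crude globally — instead one localizes further, writing $1-r$ explicitly and integrating by parts in the weight to trade the $1/(1-r)$ singularity for a derivative, exactly as in the proof of the classical one-dimensional Hardy inequality $\int_0^1\tfrac{|g(t)|^2}{t^2}dt\le 4\int_0^1|g'(t)|^2 dt$ adapted to the weight $w$.

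The cleanest route, and the one I would actually carry out, is therefore: prove the weighted Hardy inequality $\int_{1-\kappa_0}^{\sqrt b} \tfrac{w(r)}{(1-r)^2}|\phi-\bar\phi|^2\,dr\le C\int_{1-\kappa_0}^{\sqrt b} w(r)|\partial_r\phi|^2\,dr$ (with $\bar\phi$ an appropriate average, or with $\phi$ itself if one keeps a lower-order remainder), using integration by parts against a primitive of $w(r)/(1-r)^2$ and the fact that $w$ vanishes to order $b/2\ge1$ at $r=\sqrt b$; then by Cauchy--Schwarz $\int \tfrac{w|\phi|}{1-r}\,dr = \int \tfrac{w^{1/2}|\phi|}{1-r}\cdot w^{1/2}\,dr\le\big(\int\tfrac{w|\phi|^2}{(1-r)^2}dr\big)^{1/2}\big(\int w\,dr\big)^{1/2}$, and $\int_B M\dq<\infty$ since $b>2$ guarantees integrability of $(1-|\bq|)^{b/2}$ — wait, integrability holds for any $b>0$; the role of $b>2$ is rather to make the Hardy constant finite, i.e. to ensure $\int w(r)^{-1}dr$-type quantities or the boundary terms in the integration by parts behave. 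Finally, squaring and splitting $|\phi|^2\le 2|\phi-\bar\phi|^2+2|\bar\phi|^2$ produces the $\delta$-gradient term (after rescaling $\kappa_0$ or absorbing constants, the smallness of $\delta$ is obtained by shrinking the boundary layer, since the Hardy constant on $\{1-\kappa<|\bq|<\sqrt b\}$ can be made small by taking $\kappa$ small — this is the mechanism that yields ``for every $\delta>0$'') plus the $c_\delta\|\psi\|_{L^2_M}^2$ term.

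The main obstacle is the last point just flagged: getting the \emph{small} constant $\delta$ in front of $\int_B M|\nabq\psi/M|^2\dq$ rather than just a fixed constant. A naive global Hardy inequality gives only a fixed constant $C_b$. The resolution is the layer-shrinking argument: the genuinely singular contribution comes from an arbitrarily thin neighborhood of $\partial B$ (the factor $1/(1-|\bq|)$ is bounded away from the boundary), and on a thin layer $\{1-\kappa<|\bq|<\sqrt b\}$ the Poincaré/Hardy constant tends to $0$ as $\kappa\to0$ because the measure of the layer (with weight $w$) shrinks; the non-singular part on $\{|\bq|\le 1-\kappa\}$ is then handled by plain Cauchy--Schwarz and dumped into $c_\delta$, with $c_\delta$ blowing up as $\kappa\to0$, i.e. as $\delta\to0$. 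Making this quantitative — tracking how the Hardy constant on the thin layer depends on $\kappa$, and on $b$ and $d$ — is the one computation that requires genuine care; everything else is bookkeeping with polar coordinates and Cauchy--Schwarz.
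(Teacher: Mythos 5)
The paper itself offers no proof of this lemma --- it is quoted verbatim from Masmoudi \cite{masmoudi2008well} --- so your proposal can only be judged on its own terms, and there it has two genuine problems. The first is your reading of the statement. You observe that $1-|\bq|$ vanishes on the interior sphere $|\bq|=1$ (since $\sqrt b>1$) and declare this ``harmless since there $M$ is bounded above and below''. That is exactly backwards: near an interior hypersurface where $M$ is bounded below, the factor $1/|1-|\bq||$ is not integrable, so already for $\psi=M$ the left-hand side is infinite (indeed ill-defined, since $1-|\bq|$ changes sign) while the right-hand side is finite; correspondingly, your intermediate radial inequality on $(1-\kappa_0,\sqrt b)$ with the weight $w(r)/(1-r)$ fails for $\phi\equiv1$. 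The statement only makes sense --- and is only ever used in the paper, namely to control $\mathbb T(\psi)=\int_B\psi\,\frac{b\,\bq\otimes\bq}{b-|\bq|^2}\dq$ --- if $1-|\bq|$ is read as the distance to $\partial B$ (Masmoudi's unit-ball normalization; equivalently a factor $1-|\bq|^2/b$ up to constants), where the degeneracy $M\sim\operatorname{dist}(\bq,\partial B)^{b/2}$ compensates the singularity. A correct proof must exploit that compensation rather than dismiss the singular set.

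Second, even under the correct reading, your mechanism for producing the small $\delta$ is not valid and is also unnecessary. The constant in a (weighted) Hardy inequality does not tend to zero as the layer $\{\sqrt b-\kappa<|\bq|<\sqrt b\}$ shrinks: Hardy constants are scale-invariant (e.g.\ $\int_0^\kappa t^{-2}|g|^2\,\dd t\le 4\int_0^\kappa|g'|^2\,\dd t$ with the same constant $4$ for every $\kappa$), so ``the Hardy constant can be made small by taking $\kappa$ small'' cannot be the source of smallness; what does vanish as $\kappa\to0$ is the mass $\int_{L_\kappa}M\dq$ (or $\int_{L_\kappa}M\operatorname{dist}^{-2}\dq$) entering through Cauchy--Schwarz, and your argument would need to be reorganised around that. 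In fact, since the paper assumes $b>2$, the entire boundary-layer/Hardy apparatus is superfluous: by Cauchy--Schwarz,
\begin{align*}
\bigg(\int_B\frac{|\psi|}{\operatorname{dist}(\bq,\partial B)}\dq\bigg)^2
=\bigg(\int_B\frac{M}{\operatorname{dist}(\bq,\partial B)}\Big|\frac{\psi}{M}\Big|\dq\bigg)^2
\le\int_B\frac{M}{\operatorname{dist}(\bq,\partial B)^2}\dq\,\int_B M\Big|\frac{\psi}{M}\Big|^2\dq,
\end{align*}
and $\int_B M\operatorname{dist}^{-2}\dq<\infty$ precisely because $M\sim\operatorname{dist}^{b/2}$ with $b/2-2>-1$, i.e.\ $b>2$; this yields the lemma with $\delta=0$ and no gradient term at all. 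The $\delta$--$c_\delta$ structure (and Masmoudi's genuinely harder argument) is only needed for $0<b\le2$, a case your layer-shrinking mechanism would not close either, for the reason just given.
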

 Finally, for $s\in\mathbb N$ and $p\geq1$ we define the spaces $W^{s,p}(\mt;L_M^2(B))$ and $W^{s,p}(\mt;H_M^1(B))$ respectively, as the set of measurable functions $f$ on $\mt\times B$ for which the corresponding norm
\begin{align*}
\|f\|^p_{W^{s,p}_{\bx}L^2_M}:= \sum_{\vert \alpha \vert\leq s}\int_{\mt}\bigg(\int_B M \Big\vert\partial^\alpha_\bx  \frac{f}{M} \Big\vert^2  \dq \bigg)^\frac{p}{2} \dx,
\quad 
\|f\|^p_{W^{s,p}_{\bx}H^1_M}:=\sum_{\vert \alpha \vert\leq s}\int_{\mt}\bigg(\int_B M \Big\vert \partial^\alpha_\bx\nabla_{\bq}  \frac{f}{M} \Big\vert^2  \dq\bigg)^\frac{p}{2} \dx 
\end{align*}
is finite.
%
%

\subsection{Main result}
We start by giving a rigorous definition of solution to the coupled system \eqref{contEq}--\eqref{fokkerPlank}.
\begin{definition}
\label{def:strsolmartRho}
Let $s\in\mathbb N$ and $T>0$. Assume that
$(\varrho_0,\vu_0)\in W^{s,2}(\mt)\times W^{s,2}(\mt;\mathbb{R}^d )$, $\psi_0\in W^{s,2} \big( \tor; L^2_M(B) \big) $
and $\bff\in C([0,T]; W^{s,2}(\tor;\mathbb{R}^d ))$.
We call the triple
$\left(\varrho,\vu,\psi\right)$
a solution to the system \eqref{contEq}--\eqref{fokkerPlank} with initial condition $(\varrho_0,\vu_0,\psi_0)$ in the interval $[0,T]$ provided the following holds.
\begin{itemize}
\item[(a)] $\varrho$ satisfies
$$\varrho \in  C([0,T]; W^{s,2}(\tor)), \ \varrho>0;$$

\item[(b)] $\vu$ satisfies
$$ \vu \in C([0,T]; W^{s,2}(\tor;\mathbb{R}^d))\cap L^2(0,T;W^{s+1,2}(\mt;\mathbb{R}^d));$$
\item[(c)] $\psi$ satisfies
\begin{align*}
\psi\in C \big( [0, T]; W^{s,2} \big( \tor; L^2_M(B) \big) \big)  
\cap L^2 \big( 0,T; W^{s,2}\big(\tor; H^1_M(B) \big)\big);
\end{align*}
\item[(d)] there holds for all $(t,\bx)\in[0,T] \times \mt$, for a.e. $\bq \in  B$ and for any $\phi(\bq) \in C^1(B)$,
\begin{equation}
\begin{aligned}
\nonumber
\varrho(t) &= \varrho_0 -  \int_0^{t} \Div (\varrho\vu) \,\mathrm{d}\sigma, 
\\
\varrho\vu (t)  &= \varrho_0\vu_0 - \int_0^{t} \Big[ \Div\left(\varrho \vu \otimes \vu\right)  +  \nabx p(\varrho) -  \Div \mathbb{S} (\Grad \vu) -  \Div \mathbb{T}(\psi)   - \varrho\bff\Big]  \,\mathrm{d}\sigma,\\
\int_B\psi (t)\phi\dq  &= \int_B\psi_0\phi \dq - \int_0^t \int_B\Big[ \divx  (\mathbf{u} \psi) 
-
\varepsilon \,\Delx \psi
 \Big] \phi\dq\dif\sigma
+
\int_0^t \int_B
 (\nabx   \mathbf{u}) \mathbf{q}\psi \cdot \nabq \phi \, \dq\dif\sigma
\\&
-
\frac{A_{11}}{4\lambda}  \int_0^t\int_B
M \nabq  \bigg(\frac{\psi}{M} \bigg)\cdot \nabq \phi \,\dq\dif\sigma.
\end{aligned}
\end{equation} 
\end{itemize}
\end{definition}
We remark that the assumed regularity of the solution $(\varrho,\vu,\psi)$ as well as the data together with the equations in (d) immediately imply that $\varrho$, $\vu$ and $\psi$ are differentiable in time so that indeed we obtain \eqref{contEq}--\eqref{fokkerPlank} (with \eqref{fokkerPlank} understood in the weak sense with respect to the elongation variable). Similarly, it is easy to see that $\varrho(0)=\varrho_0$, $\bu(0)=\bu_0$ and $\psi(0)=\psi_0$.
Our main result, stated below in Theorem \ref{thm:main0}, establishes the existence of a unique local-in-time solution to the coupled kinetic-fluid system \eqref{contEq}--\eqref{fokkerPlank} satisfying the boundary and initial conditions \eqref{initialDensityVelo}--\eqref{fokkerPlankBoundary}.
\begin{theorem}
\label{thm:main0}
Let $s\in\mathbb N$ satisfy $s>\frac{d}{2} + 2$ and suppose that $\varepsilon\geq0$. Assume that
$(\varrho_0,\vu_0)\in W^{s,2}(\mt) \times W^{s,2}(\mt;\mr^d)$, $\varrho_0>0$,  $\psi_0\in W^{s,2} \big( \tor; L^2_M(B) \big) $
and $\bff \in C([0,T_0]; W^{s,2}(\tor;\mathbb{R}^d))$. Then there is $T>0$
such that there is a unique solution $(\varrho,\vu,\psi)$ to problem \eqref{contEq}--\eqref{fokkerPlank}
in the sense of Definition \ref{def:strsolmartRho} on the interval $[0,T]$ with the initial condition $(\varrho_0,\vu_0,\psi_0)$.
\end{theorem}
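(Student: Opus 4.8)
The plan is to recast the problem as a fixed-point iteration that decouples the fluid and the kinetic equations, to solve each linear sub-problem by the a priori theory prepared in Sections \ref{sec:fluid} and \ref{sec:FP}, and then to close the loop by a contraction argument. Following \cite{breit2018local}, I first rewrite the Navier--Stokes subsystem in symmetric hyperbolic form: dividing \eqref{momEq} by $\varrho$ and passing to a suitable function of the density (so that the pressure gradient becomes a symmetric coupling term), \eqref{contEq}--\eqref{momEq} becomes a symmetric hyperbolic system for $(\varrho,\bu)$ perturbed by the partial viscosity $\varrho^{-1}\divx\mathbb{S}(\nabx\bu)$ and forced by $\varrho^{-1}\divx\mathbb{T}(\psi)+\bff$. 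The iteration map $\mathcal{L}$ is then defined as follows: given $(\tilde\varrho,\tilde\bu)$ in a suitable ball, first solve the linear Fokker--Planck equation \eqref{fokkerPlank} with these frozen coefficients to obtain $\psi$ (Theorem \ref{thm:mainFP}), form the elastic stress $\mathbb{T}(\psi)$, and then solve the linear fluid system with this stress as a source to obtain the new $(\varrho,\bu)$ (Theorems \ref{thm:main} and \ref{thm:main'}).

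The first task is to show that, for $T$ small, $\mathcal{L}$ maps the closed ball
\[
\mathcal{B}_R=\Big\{(\varrho,\bu,\psi):\ \sup_{[0,T]}\big(\|\varrho-\varrho_0\|_{W^{s,2}_{\bx}}+\|\bu\|_{W^{s,2}_{\bx}}+\|\psi\|_{W^{s,2}_{\bx}L^2_M}\big)+\|\bu\|_{L^2(0,T;W^{s+1,2}_{\bx})}+\|\psi\|_{L^2(0,T;W^{s,2}_{\bx}H^1_M)}\le R\Big\}
\]
into itself. Beyond the fluid estimates of \cite{breit2018local}, the only genuinely new ingredient here is the control of the coupling term $\divx\mathbb{T}(\psi)$ in $W^{s,2}_{\bx}$. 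Since $\mathbb{T}(\psi)=\int_B\psi\,\mathbf{F}(\bq)\otimes\bq\dq$ with $\mathbf{F}(\bq)=b\bq/(b-|\bq|^2)$ singular on $\partial B$, the estimate exploits the Maxwellian weight: writing $\psi=M\,(\psi/M)$ and using that $M(\bq)\propto(1-|\bq|^2/b)^{b/2}$ vanishes to order $b/2>1$ on $\partial B$ while $|\mathbf{F}(\bq)\otimes\bq|$ blows up only like $(b-|\bq|^2)^{-1}$, one bounds $\|\partial^\alpha_\bx\mathbb{T}(\psi)\|_{L^2_{\bx}}$ by quantities of the Hardy type controlled in Lemma \ref{lem:A1}, which absorbs them into $\|\psi\|_{W^{s,2}_{\bx}H^1_M}$ up to lower order terms. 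Combining this with the Moser estimates \eqref{commutatorEstimate}--\eqref{commutatorEstimateContinuous} for the nonlinear fluid terms and with the smallness of $T$ yields $\mathcal{L}(\mathcal{B}_R)\subset\mathcal{B}_R$ for $R$ fixed by the data.

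The main obstacle is the contraction step. As is typical for quasilinear hyperbolic systems, $\mathcal{L}$ is not a contraction on $\mathcal{B}_R$ in the $W^{s,2}$-topology: the difference estimates for both the symmetric hyperbolic fluid part and the Fokker--Planck part lose one spatial derivative, the latter because of the transport term $\divx(\bu\psi)$ and especially the drag term $\divq((\nabx\bu)\bq\psi)$, which carries the full gradient of $\bu$ and has no compactness. The remedy, furnished by Theorems \ref{thm:main'} and \ref{thm:main'FP}, is to estimate the difference of two iterates in the weaker norm at level $W^{s-1,2}$ in $\bx$, while retaining the $L^2_M$/$H^1_M$ structure in $\bq$: one proves $\|\mathcal{L}(\mathbf{z}_1)-\mathcal{L}(\mathbf{z}_2)\|_{s-1}\le C(R)\,T^\theta\,\|\mathbf{z}_1-\mathbf{z}_2\|_{s-1}$ for some $\theta>0$. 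Together with the uniform bound in $\mathcal{B}_R$ at level $s$, this makes the iterates a Cauchy sequence in the $(s-1)$-norm; they converge there, and weak-$*$ in the $s$-norm, to a fixed point, which by lower semicontinuity lies in $\mathcal{B}_R$ and, by a standard argument, in fact belongs to the continuity class of Definition \ref{def:strsolmartRho} rather than merely $L^\infty$ in time. The case $\varepsilon=0$ requires care: then \eqref{fokkerPlank} is only degenerate parabolic, behaving hyperbolically in $(t,\bx)$, so all estimates of Section \ref{sec:FP} must be $\varepsilon$-uniform, the $\bq$-dissipation (the $A_{11}$-term, via Lemma \ref{lem:A1}) compensating for the missing $\bx$-regularization; the no-flux condition \eqref{fokkerPlankBoundary} ensures that testing against powers of $\psi/M$ produces no boundary contributions.

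Finally, undoing the density transformation converts the symmetric-hyperbolic solution into a solution of \eqref{contEq}--\eqref{fokkerPlank} in the sense of Definition \ref{def:strsolmartRho}; positivity of $\varrho$ persists on a short interval because $\varrho$ is continuous with $\varrho_0>0$. Uniqueness follows by applying the same $(s-1)$-level difference estimate to two solutions in $\mathcal{B}_R$ and invoking Gronwall's inequality, and the blow-up criterion of Corollary \ref{cor:blowup} is obtained by continuation from the a priori bounds.
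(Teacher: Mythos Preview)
Your proposal is correct and follows essentially the same strategy as the paper: transform to the symmetric hyperbolic $(r,\bu)$-system, invoke the a priori theory of Sections~\ref{sec:fluid} and~\ref{sec:FP} (including Lemma~\ref{lem:A1} for the stress tensor), run a fixed-point iteration that is bounded at level $s$ but contractive only at level $s'\le s-1$, and then undo the density transformation. The only organisational difference is that the paper sets up the fixed-point map $\mathfrak{T}$ on the polymer variable $\widetilde\psi\mapsto\psi$ (via $(r,\bu)$), whereas you iterate on the fluid variables $(\tilde\varrho,\tilde\bu)\mapsto(\varrho,\bu)$ (via $\psi$); since the two sub-problems are solved in the same order either way, this is a cosmetic choice and the estimates used (Theorems~\ref{thm:main}, \ref{thm:main'}, \ref{thm:mainFP}, \ref{thm:main'FP}) are identical. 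One terminological slip: the fluid sub-problem you solve via Theorem~\ref{thm:main} is not linear but the full quasilinear system with $\mathbb{T}$ prescribed; this does not affect the argument.
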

\begin{remark}
One can easily adapt the method used in this paper to solve the equivalent version of Theorem \ref{thm:main0} on the whole space by imposing the following far field conditions
\begin{align*}
\varrho\rightarrow \overline{\varrho}, \quad \bu  \rightarrow 0, \quad \psi  \rightarrow M  \quad \text{as} \quad \vert \bx \vert \rightarrow \infty,
\end{align*}
where $\overline{\varrho}>0$ is a constant and $M$ is the Maxwellian \eqref{maxwellian}. An extension to bounded domains complemented with the no-slip boundary conditions for the velocity field seems more involved. In particular, a Galerkin approximation cannot be used for the justification of the estimates.
\end{remark}
We shall prove Theorem \ref{thm:main0} in an indirect way by first rewriting \eqref{contEq}--\eqref{momEq} as a symmetric hyperbolic-parabolic system in terms of $(r, \bu)$ where $r=r(\varrho)$ similar to \cite{breit2018local}. This reformulation relies on the non-anticipation of possible vacuum region (i.e. $\varrho \neq 0$) in the construction of strong solutions to \eqref{contEq}--\eqref{momEq}. Having transformed the original system \eqref{contEq}--\eqref{momEq} into a symmetric hyperbolic-parabolic one, we derive a priori estimates for $(r,\bu)$ under the assumption that the given data has enough regularity. To derive this reformulation, we first observe that formerly we have
\begin{align*}
\partial_t(\varrho \mathbf{u})= \partial_t\varrho \, \mathbf{u} + \varrho \partial_t \mathbf{u}, \quad \partial_t \varrho = - \divx   (\varrho\mathbf{u}),
\end{align*}
due to \eqref{contEq}.
As such, we can rewrite the momentum balance  equation \eqref{momEq} as
\begin{align*}
\varrho \partial_t \mathbf{u}  + \varrho\mathbf{u} \cdot  \nabx  \mathbf{u} + \nabx   p(\varrho) = \divx   \mathbb{S}(\nabx   \mathbf{u}) +\divx   \mathbb{T} +\varrho \mathbf{f}.
\end{align*}
Now since the non appearance of a vacuum state  is  anticipated for the existence of a strong solution to the compressible system, we can further rewrite the above equation as
\begin{align*}
 \partial_t \mathbf{u}  + \mathbf{u} \cdot  \nabx  \mathbf{u} + \frac{1}{\varrho}\nabx   p(\varrho) = \frac{1}{\varrho}\divx   \mathbb{S}(\nabx   \mathbf{u}) +\frac{1}{\varrho}\divx   \mathbb{T}(\psi) + \mathbf{f}.
\end{align*}
Finally, if we introduce
\begin{align}
\label{transfRhoToR}
r:=\sqrt{\frac{2a\gamma}{\gamma -1}} \varrho^{\frac{\gamma-1}{2}}, \quad 
D(r):=\frac{1}{\varrho(r)}=\bigg( \frac{\gamma -1 }{2a\gamma} \bigg)^\frac{-1}{\gamma-1} r^\frac{-2}{\gamma -1},
\end{align}
then the mass-momumtum balance equations \eqref{contEq}--\eqref{momEq} together with the Fokker--Planck equation \eqref{fokkerPlank} become
\begin{align}
\partial_t r + \mathbf{u}\cdot \nabx   r +\frac{\gamma -1}{2} r\, \divx   \mathbf{u} =0, \label{contEquR}
\\
 \partial_t \mathbf{u}  + \mathbf{u} \cdot  \nabx  \mathbf{u} + r\nabx   r = D(r)\divx   \mathbb{S}(\nabx   \mathbf{u}) +D(r)\divx   \mathbb{T}(\psi) + \mathbf{f},\label{momEquR}
 \\
\partial_t \psi + \divx  (\mathbf{u} \psi) 
=
\varepsilon \Delx \psi
-
 \divq  \big( (\nabx   \mathbf{u}) \mathbf{q}\psi \big) 
 +
\frac{A_{11}}{4\lambda}  \divq  \bigg( M \nabq  \bigg(\frac{\psi}{M} \bigg)
\bigg) \label{fokkerPlankR} 
\end{align}
respectively. Setting $r_0:=\sqrt{\frac{2a\gamma}{\gamma -1}} \varrho_0^{\frac{\gamma-1}{2}}$ we can endow the above system with the following initial and boundary conditions
\begin{align}
&\bigg[\frac{A_{11}}{4\lambda}    M \nabq  \frac{\psi}{M} -(\nabx   \mathbf{u}) \bq \psi
 \bigg] \cdot \frac{\bq}{\vert \bq \vert} =0
&\quad \text{on }  \mathbb{R}_{> 0}\times \tor \times \partial B,
\\
&(r, \mathbf{u})\vert_{t=0} =(r_0, \mathbf{u}_0)
&\quad \text{in }  \tor,
\label{initialDensityR}
\\
&\psi \vert_{t=0} =\psi_0 \geq 0
& \quad \text{in } \tor \times B.
\end{align} 
Analogous to Definition \ref{def:strsolmartRho}, we give in the following, the definition of a solution to \eqref{contEquR}--\eqref{fokkerPlankR}.
\begin{definition}
\label{def:strsolmartR}
Let $s\in\mathbb N$ and $T>0$. Assume that
$(r_0,\vu_0)\in W^{s,2}(\mt)\times W^{s,2}(\mt;\mathbb{R}^d )$, $\psi_0\in W^{s,2} \big( \tor; L^2_M(B) \big) $
and $\bff\in C([0,T]; W^{s,2}(\tor;\mathbb{R}^d ))$.
We call the triple
$\left(r,\vu,\psi\right)$
a solution to the system \eqref{contEquR}--\eqref{fokkerPlankR} with initial condition $(r_0,\vu_0,\psi_0)$ in the interval $[0,T]$ provided the following holds.
\begin{itemize}
\item[(a)] $r$ satisfies
$$r \in  C([0,T]; W^{s,2}(\tor)), \ r>0;$$

\item[(b)] the velocity $\vu$ satisfies
$$ \vu \in C([0,T]; W^{s,2}(\tor;\mathbb{R}^d))\cap L^2(0,T;W^{s+1,2}(\mt;\mathbb{R}^d));$$
\item[(c)] $\psi$ satisfies
\begin{align*}
\psi\in C \big( [0, T]; W^{s,2} \big( \tor; L^2_M(B) \big) \big)  
\cap L^2 \big( 0,T; W^{s,2}\big(\tor; H^1_M(B) \big)\big);
\end{align*}
\item[(d)] there holds for all $(t,\bx)\in[0,T] \times \mt$, for a.e. $\bq \in  B$ and for any $\phi(\bq) \in C^1(B)$,
\begin{equation}
\begin{aligned}
\nonumber
r(t) &= r_0 -  \int_0^{t}\Big[\vu \cdot \Grad r \  + \tfrac{\gamma - 1}{2}\, r\, \Div \vu\Big] \,\mathrm{d}\sigma, 
\\
\vu (t)  &= \vu_0 - \int_0^{t} \big[ \vu \cdot \Grad \vu + r \Grad r  -  D(r) \Div \mathbb{S} (\Grad \vu)  -   D(r) \Div \mathbb{T}   - \bff \big] \,\mathrm{d}\sigma,
\\
\int_B\psi (t)\phi\dq  &= \int_B\psi_0 \phi\dq - \int_0^t \int_B\Big[ \divx  (\mathbf{u} \psi) 
-
\varepsilon \,\Delx \psi
 \Big]\phi \dq\dif\sigma
+
\int_0^t \int_B
 (\nabx   \mathbf{u}) \mathbf{q}\psi \cdot \nabq \phi \, \dq\dif\sigma
\\&
-
\frac{A_{11}}{4\lambda}  \int_0^t\int_B
M \nabq  \bigg(\frac{\psi}{M} \bigg)\cdot \nabq \phi \,\dq\dif\sigma.
\end{aligned}
\end{equation} 
\end{itemize}
\end{definition}
Majority of our effort will be concentrated on proving the following theorem.
\begin{theorem}
\label{thm:aux}
Let $s\in\mathbb N$ satisfy $s>\frac{d}{2} + 2$ and suppose that $\varepsilon\geq0$. Assume that
$(r_0,\vu_0)\in W^{s,2}(\mt) \times W^{s,2}(\mt;\mr^d)$, $r_0>0$,  $\psi_0\in W^{s,2} \big( \tor; L^2_M(B) \big) $
and $\bff \in C([0,T_0]; W^{s,2}(\tor;\mathbb{R}^d))$. Then there is $T>0$
such that there is a unique solution $(r,\vu,\psi)$ to problem \eqref{contEquR}--\eqref{fokkerPlankR}
in the sense of Definition \ref{def:strsolmartR}  in the interval $[0,T]$and  with the initial condition $(r_0,\vu_0,\psi_0)$.
\end{theorem}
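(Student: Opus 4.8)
The plan is to prove Theorem~\ref{thm:aux} via a fixed-point (contraction-type) argument that decouples the fluid subsystem \eqref{contEquR}--\eqref{momEquR} from the Fokker--Planck equation \eqref{fokkerPlankR}. First I would set up the iteration space: fix $R>0$ large (depending on the data norms) and $T>0$ small (to be chosen), and let
\[
\mathcal{X}_{R,T}:=\Big\{(r,\bu,\psi)\ :\ \|r\|_{L^\infty_t W^{s,2}_\bx}+\|\bu\|_{L^\infty_t W^{s,2}_\bx}+\|\bu\|_{L^2_t W^{s+1,2}_\bx}+\|\psi\|_{L^\infty_t W^{s,2}_\bx L^2_M}+\|\psi\|_{L^2_t W^{s,2}_\bx H^1_M}\le R\Big\},
\]
intersected with the positivity constraint $r\ge \underline{r}>0$ (possible for short time since $r_0>0$ and $r_0\in W^{s,2}_\bx\hookrightarrow C(\tor)$ as $s>d/2$). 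The map $\mathcal{T}$ sends $(\bar r,\bar\bu,\bar\psi)\in\mathcal{X}_{R,T}$ to $(r,\bu,\psi)$ where: (i) $\psi$ solves the linear Fokker--Planck equation \eqref{fokkerPlankR} with the given velocity $\bar\bu$ — this is handled by Theorem~\ref{thm:mainFP}, which supplies exactly the estimate keeping $\psi$ in the class (c) with norm controlled in terms of $\|\bar\bu\|$ and $\|\psi_0\|$; and (ii) $(r,\bu)$ solves the symmetric hyperbolic--parabolic system \eqref{contEquR}--\eqref{momEquR} with the elastic stress $\mathbb{T}(\psi)$ (now a known source term, via the bound $\|\mathbb{T}(\psi)\|_{W^{s,2}_\bx}\lesssim\|\psi\|_{W^{s,2}_\bx L^2_M}$ which one gets from Lemma~\ref{lem:A1} to absorb the FENE singularity in $U'$) — this is handled by Theorem~\ref{thm:main} (the fluid a priori estimates), giving $(r,\bu)$ in classes (a),(b) with norm controlled in terms of the data and $\|\mathbb{T}(\psi)\|$.

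The next step is the \emph{self-map} property: choosing $R$ roughly twice the data norm and then $T=T(R)$ small enough, the a priori estimates of Theorems~\ref{thm:main} and \ref{thm:mainFP} (which are of the standard form ``$\le$ data $+\ T\cdot P(R)$'' after a Gronwall/continuity-in-time argument) show $\mathcal{T}(\mathcal{X}_{R,T})\subset\mathcal{X}_{R,T}$, and likewise that $r$ stays bounded away from zero on $[0,T]$. After that comes the \emph{contraction} step, and this is where the twist the authors flagged in the introduction enters: one cannot close the contraction in the strong norm of $\mathcal{X}_{R,T}$ because differentiating the equations $s$ times and taking differences loses a derivative (the term $\divq((\nabx\bu)\bq\psi)$ and the transport terms are the culprits). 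Instead I would estimate the difference of two iterates $(r_1,\bu_1,\psi_1)-(r_2,\bu_2,\psi_2)$ in a weaker norm — $L^\infty_t L^2_\bx$ for $(r,\bu)$ (plus $L^2_t W^{1,2}_\bx$ for $\bu$) and $L^\infty_t L^2_\bx L^2_M\cap L^2_t L^2_\bx H^1_M$ for $\psi$ — using the difference estimate for the Fokker--Planck part from Theorem~\ref{thm:main'FP} together with the analogous low-norm difference estimate for the fluid part (Theorem~\ref{thm:main'}). In this weaker metric $\mathcal{X}_{R,T}$ is a complete metric space (being closed in the weak norm is not automatic, but one passes to the closure; the uniform strong bound $R$ is preserved by weak-$*$ limits), and for $T$ small the constant in the difference inequality is $<1$, so Banach's fixed point theorem yields a unique fixed point $(r,\bu,\psi)$, which by construction solves \eqref{contEquR}--\eqref{fokkerPlankR} in the sense of Definition~\ref{def:strsolmartR}.

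Two points then need tidying up. First, \emph{uniqueness in the full class}: the fixed point is unique in $\mathcal{X}_{R,T}$, but a general solution in the sense of Definition~\ref{def:strsolmartR} on a possibly shorter interval is shown to coincide with it by running the same weak-norm difference estimate on the overlap interval (Gronwall with zero initial difference forces agreement), then extending by a continuation argument. Second, \emph{time continuity}: the a priori estimates give $L^\infty_t W^{s,2}_\bx$ bounds, and upgrading to $C([0,T];W^{s,2}_\bx)$ (class (a)–(c)) is done by the standard argument — weak continuity is immediate, and norm continuity follows from the energy identity (the $W^{s,2}_\bx$ ``energy'' is continuous in $t$), or equivalently by the Friedrichs-mollification/Bona--Smith technique used to justify the estimates in the first place.

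The main obstacle I expect is the contraction step, specifically the interplay between the loss of derivatives and the FENE singularity: the difference estimate for $\psi$ must handle the term $\divq((\nabx(\bu_1-\bu_2))\bq\psi_1)$, whose natural control when tested against $\psi_1-\psi_2$ in the $L^2_M$ inner product produces $\int_B (\nabx(\bu_1-\bu_2))\bq\psi_1\cdot\nabq((\psi_1-\psi_2)/M)\dq$; bounding this requires simultaneously the $H^1_M$-smallness of the difference (absorbed into the dissipation with a Young's inequality) \emph{and} an $L^\infty$-type control of $\psi_1/M$ near $\partial B$, which is precisely where Lemma~\ref{lem:A1} and the weighted-space machinery are indispensable. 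Getting the \emph{same} weak norm to work on both the fluid and kinetic sides simultaneously — so that the coupled difference inequality actually closes — is the delicate bookkeeping that the proof hinges on.
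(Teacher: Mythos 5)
Your proposal matches the paper's proof in all essentials: the paper also decouples the system via Theorems \ref{thm:main}, \ref{thm:main'}, \ref{thm:mainFP}, \ref{thm:main'FP}, proves a self-map property of the iteration on a ball in the strong space $X^s$ (Lemma \ref{lems:1}, using Lemma \ref{lem:A1} to control $\mathbb T(\psi)$), and then obtains the contraction only in the weaker topology $X^{s'}$, $s'\le s-1$ (Lemma \ref{lems:2}), concluding by Banach's fixed point theorem. The only cosmetic differences are that the paper's map $\mathfrak T$ iterates in $\psi$ alone (the fluid variables being slaved to $\mathbb T(\widetilde\psi)$) and measures the contraction in $W^{s',2}$-based norms rather than at the $L^2$ level, and that the bound on $\mathbb T(\psi)$ from Lemma \ref{lem:A1} genuinely needs the $\delta$-weighted $H^1_M$ dissipation term, not the $L^2_M$ norm alone, exactly as absorbed in Lemma \ref{lems:1}.
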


\section{Solving the fluid system}
\label{sec:fluid}
We start by solving the fluid system for a given elastic stress tensor and a given external force. That is, for a given $\mathbb T$ and $\bff$, we want to solve the following system
\begin{equation} \label{E3}
\partial_t r + \vu \cdot \Grad r  + \frac{\gamma - 1}{2} r \Div \vu = 0,
\end{equation}
\begin{equation} \label{E4}
\partial_t \vu +  \vu \cdot \Grad \vu + r \Grad r   = D(r) \Div \mathbb{S} (\Grad \vu) +D(r) \Div\mathbb{T} + \bff.
\end{equation}
Let us start with a precise definition of the solution.
\begin{definition}[Strong solution] \label{def:strsolmartRfixedT}
Let $s\in\mathbb N$ and $T>0$. Assume that
$(r_0,\vu_0)\in W^{s,2}(\mt)\times W^{s,2}(\mt ; \mathbb{R}^d )$, $\mathbb T\in C([0,T]; W^{s,2}(\tor;\mathbb{R}^{d\times d}))$
and $\bff\in C([0,T]; W^{s,2}(\tor ;\mathbb{R}^d ))$.
We call the tuple
$\left(r,\vu\right)$
a strong solution to the system \eqref{E3}--\eqref{E4} with initial condition $(r_0,\vu_0)$ in the interval $[0,T]$ provided the following holds.
\begin{itemize}
\item[(a)] $r$ satisfies
$$r \in  C([0,T]; W^{s,2}(\tor)), \ r>0;$$

\item[(b)] $\vu$ satisfies
$$ \vu \in C([0,T]; W^{s,2}(\tor;\mathbb{R}^d))\cap L^2(0,T;W^{s+1,2}(\mt;\mathbb{R}^d));$$

\item[(c)] there holds for all $t\in[0,T]$
\[
\begin{split}
r(t) &= r_0 -  \int_0^{t}\Big[\vu \cdot \Grad r \  + \tfrac{\gamma - 1}{2}\, r\, \Div \vu\Big] \,\mathrm{d}\sigma, \\
\vu (t)  &= \vu_0 - \int_0^{t} \left[ \vu \cdot \Grad \vu + r \Grad r \right]  \,\mathrm{d}\sigma + \int_0^{t}  D(r) \Div \mathbb{S} (\Grad \vu) \, \mathrm{d}\sigma + \int_0^{t}  D(r) \Div \mathbb{T}   \,\mathrm{d}\sigma+ \int_0^{t} \bff  \,\mathrm{d}\sigma.
\end{split}
\]
\end{itemize}
\end{definition}
We now formulate our results concerning existence and uniqueness of solutions to \eqref{E3}--\eqref{E4}. 
\begin{theorem}\label{thm:main}
Let $s\in\mathbb N$ satisfy $s>\frac{d}{2} + 2$ and $T_0>0$. Assume that
$(r_0,\vu_0)\in W^{s,2}(\mt)\times W^{s,2}(\mt;\mr^d)$, $r_0>0$, $\mathbb T\in C([0,T_0]; W^{s,2}(\tor;\mathbb{R}^{d \times d}))$
and $\bff \in C([0,T_0]; W^{s,2}(\tor ;\mathbb{R}^d ))$. Then there is $T\in(0,T_0]$
such that there is a strong solution $(r,\vu)$ to problem \eqref{E3}--\eqref{E4}
in the sense of Definition \ref{def:strsolmartRfixedT} with the initial condition $(r_0,\vu_0)$ in the interval $[0,T]$. Moreover, we have the estimate
\begin{equation}
\begin{aligned}
\label{est_thm:main}
\sup_{0<t<T}  &\big\Vert (r , \mathbf{u}   ) \big\Vert^2_{W^{s,2}_\mathbf{x}}
+ \int_{0}^T\big\Vert \mathbf{u}  \big\Vert^2_{W^{s+1,2}_\mathbf{x}}
\dt
\leq c\,  \bigg[
 \big\Vert \big(r_0, \mathbf{u}_0  \big) \big\Vert^2_{W^{s,2}_\mathbf{x}}
+ 
\int_0^T  \Big(
\Vert  \mathbf{f} \Vert_{W^{s,2}_\mathbf{x}}^2
+
\Vert   \mathbb T\Vert_{W^{s,2}_\mathbf{x}}^2
\Big)
 \, \mathrm{d}t
 \bigg]
\end{aligned}
\end{equation}
where the constant $c$ depends only on $\gamma,s$ and $d$.
\end{theorem}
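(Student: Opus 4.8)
The plan is to solve \eqref{E3}--\eqref{E4} as a quasilinear symmetric hyperbolic system (the $r$-equation and the first-order part of the $\vu$-equation) perturbed by a second-order parabolic term $D(r)\divx\mathbb{S}(\nabx\vu)$, treating $D(r)\divx\mathbb{T}+\bff$ as a given source. First I would set up a linearization/iteration scheme: given $(\tilde r,\tilde\vu)$ in a suitable ball, solve the \emph{linear} problem
\begin{equation*}
\partial_t r + \tilde\vu\cdot\nabx r + \tfrac{\gamma-1}{2}\tilde r\,\divx\vu = 0,\qquad
\partial_t\vu + \tilde\vu\cdot\nabx\vu + \tilde r\nabx r = D(\tilde r)\divx\mathbb{S}(\nabx\vu) + D(\tilde r)\divx\mathbb{T} + \bff,
\end{equation*}
for $(r,\vu)$. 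The transport equation for $r$ is solved along the flow of $\tilde\vu$ (which is Lipschitz since $s>\tfrac d2+1$), preserving positivity $r>0$ on a short time interval because $r_0>0$ and $\nabx\cdot\tilde\vu$ is bounded; the velocity equation is a linear parabolic system with smooth uniformly elliptic principal part ($D(\tilde r)\mathbb{S}$ is elliptic since $\mu^S>0$, $D(\tilde r)\ge c>0$), solvable by standard Galerkin or semigroup methods with the asserted regularity.

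The heart of the matter is the \emph{a priori estimate} \eqref{est_thm:main}, uniform in the iteration. For this I would apply $\partial^\alpha_\bx$, $|\alpha|\le s$, to both equations, test the $r$-equation with $\partial^\alpha_\bx r$ and the $\vu$-equation with $\partial^\alpha_\bx\vu$, and add. The key cancellation: the symmetrized first-order terms $\tilde r\nabx r$ (momentum) and $\tfrac{\gamma-1}{2}\tilde r\divx\vu$ (mass) combine so that the worst derivative-on-the-unknown contributions cancel after integration by parts — this is exactly why the change of variables \eqref{transfRhoToR} was made. The commutators $[\partial^\alpha_\bx,\tilde\vu\cdot\nabx]$, $[\partial^\alpha_\bx,\tilde r]\nabx(\cdot)$, and $[\partial^\alpha_\bx, D(\tilde r)]\divx\mathbb{S}(\nabx\vu)$ are controlled by the Moser-type estimates \eqref{commutatorEstimate} and \eqref{commutatorEstimateContinuous}, using $W^{s,2}_\bx\hookrightarrow L^\infty_\bx$ and that $D\in C^\infty$ on $\{r>0\}$ with $r$ bounded away from $0$ and $\infty$. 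The parabolic term yields, after integration by parts, a good term $c\int|\nabx\partial^\alpha_\bx\vu|^2\,\dx$ (controlling the $L^2_tW^{s+1,2}_\bx$ norm of $\vu$ via Korn/Gårding on the torus) plus lower-order errors $\int \partial^\alpha_\bx(D(\tilde r))\,\mathbb{S}(\nabx\vu):\nabx\partial^\alpha_\bx\vu\,\dx$ absorbed by Young's inequality. Collecting everything gives
\begin{equation*}
\frac{\dd}{\dt}\|(r,\vu)\|_{W^{s,2}_\bx}^2 + c\|\vu\|_{W^{s+1,2}_\bx}^2 \le C\big(\|(\tilde r,\tilde\vu)\|_{W^{s,2}_\bx}\big)\big(\|(r,\vu)\|_{W^{s,2}_\bx}^2 + \|\mathbb{T}\|_{W^{s,2}_\bx}^2 + \|\bff\|_{W^{s,2}_\bx}^2\big),
\end{equation*}
and a standard continuation argument on a ball of radius $\sim 2(\|(r_0,\vu_0)\|_{W^{s,2}_\bx}^2 + \int_0^{T_0}(\|\mathbb{T}\|^2+\|\bff\|^2))$ produces a time $T\in(0,T_0]$ on which the bound is self-improving, yielding \eqref{est_thm:main} via Grönwall.

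To close, I would show the iteration map is a contraction in a \emph{lower-order} norm: the differences $(\delta r,\delta\vu)$ of two iterates satisfy the same system with right-hand sides quadratic in the differences, and an $L^2_\bx$ (or $W^{1,2}_\bx$) energy estimate — now without needing cancellation at top order — gives contraction on a possibly smaller interval. Interpolation between the uniform $W^{s,2}_\bx$ bound and the low-norm contraction upgrades convergence to $C([0,T];W^{s',2}_\bx)$ for any $s'<s$, and weak-$*$ lower semicontinuity places the limit in the bounded set at level $s$; a standard argument (Lions--Aubin or the equation itself) recovers time continuity at level $s$. Uniqueness follows from the same low-norm difference estimate plus Grönwall. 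The main obstacle I anticipate is bookkeeping the top-order cancellation cleanly in the presence of the variable coefficient $D(\tilde r)$ in front of the viscosity — one must make sure the elliptic term genuinely dominates after the commutator $[\partial^\alpha_\bx,D(\tilde r)]$ is split off, which forces the lower bound on $r$ to be tracked quantitatively throughout the continuation argument; everything else is routine energy estimates of the type in \cite{breit2018local, klainerman1981singular}.
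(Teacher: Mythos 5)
Your energy-estimate core is exactly the paper's: apply $\partial^\alpha_\bx$, use the Moser/commutator bounds \eqref{commutatorEstimate}--\eqref{commutatorEstimateContinuous}, exploit the cancellation of the cross terms $\tilde r\,\divx\partial^\alpha_\bx\vu\,\partial^\alpha_\bx r$ after weighting the velocity estimate by $\tfrac{\gamma-1}{2}$ (the whole point of the change of variables \eqref{transfRhoToR}), keep $r$ bounded away from $0$ and $\infty$ so that $D(r)$ is controlled, and extract the $L^2_tW^{s+1,2}_\bx$ norm of $\vu$ from the viscous term; this is precisely \eqref{conEqMultiIndex3}--\eqref{gronwellRandV}. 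Where you differ is the construction: the paper does not iterate on a linearized PDE. It inserts cut-offs $\varphi_R(\|\vu\|_{2,\infty})$ into the nonlinearities, solves the resulting globally Lipschitz system by a spatial Galerkin scheme (transport for $r[\vu^n]$ given $\vu^n\in X_n$, Banach fixed point for the projected momentum equation), passes to the limit $n\to\infty$ using the uniform bound, and finally chooses $T$ so small that $\|\vu_R(t)\|_{W^{s,2}_\bx}\le R$ and the cut-offs are never active. Your Picard scheme with high-norm boundedness plus low-norm contraction and interpolation is a legitimate, standard alternative (closer to Klainerman--Majda/Majda-type arguments) and buys a cleaner uniqueness statement for free; the paper's cut-off route buys global solvability of every approximate problem and avoids having to prove well-posedness of an auxiliary linear system.

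One step of your proposal is not accurate as written and needs repair. The "linear problem" you pose is still \emph{coupled}: the new unknown $\divx\vu$ appears in the $r$-equation and the new unknown $\nabx r$ appears in the $\vu$-equation, so you cannot solve the $r$-equation by integrating along the flow of $\tilde\vu$ and then solve a scalar parabolic problem for $\vu$ separately; the coupled linear hyperbolic--parabolic system needs its own existence argument (e.g.\ a Galerkin approximation, which is essentially what the paper sets up for the cut-off system). Note that you cannot evade this by fully decoupling the linearization (putting $\tilde r\nabx\tilde r$ and $\tilde r\,\divx\tilde\vu$ on the right-hand side as data), because then the source $\tilde r\nabx\tilde r$ must be estimated in $W^{s,2}_\bx$, which requires $\tilde r\in W^{s+1,2}_\bx$ — regularity the iteration ball does not provide — and the top-order cancellation that closes the estimate is lost. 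So the coupled linearization you chose is the right one for the estimates, but the solvability of each iterate has to be argued, not asserted; once that is supplied (and the lower bound on $r$ is tracked as you indicate), the rest of your argument goes through and reproduces \eqref{est_thm:main}.
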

The proof of Theorem \ref{thm:main} can be found in Section \ref{sec:exisNS}.
\begin{theorem}\label{thm:main'}
Let the assumptions of Theorem \ref{thm:main} be satisfied. Then the solution from Theorem \ref{thm:main} is unique. Moreover, for $s'\in\mathbb N$ with $s' \leq s-1$ we have the estimate
\begin{equation}
\begin{aligned}
\label{est:thm:main'}
 \sup_{0<t<T} &\Vert ( r^{1}(t)-r^2(t), \mathbf{u}^{1}(t)-\bu^2(t))   \Vert^2_{W^{s',2}_\mathbf{x}}
+\int_0^T \Vert \mathbf{u}^{1}-\bu^2   \Vert^2_{W^{s'+1,2}_\mathbf{x}}\, \mathrm{d}t
\\&
\leq\,c
\exp\bigg(c\int_0^T \Big( 
\Vert (r^1,r^2)  \Vert^2_{W^{s'+1,2}_\mathbf{x}}
+ \Vert (\mathbf{u}^1,\mathbf{u}^2)   \Vert_{W^{s'+2,2}_\mathbf{x}}
+
\Vert   \mathbb T^2\Vert_{W^{s',2}_\mathbf{x}}^2+1\Big)
\, \mathrm{d}t \bigg)\\
&\qquad\qquad\times
\int_0^T\Vert   \mathbb T^1-\mathbb T^2\Vert_{W^{s',2}_\mathbf{x}}^2
 \Vert r^1 \Vert_{W^{s'+1,2}_\mathbf{x}}
\, \mathrm{d}t,
\end{aligned}
\end{equation}
for some $T=T(\sup_{t}\|\bu^1\|_{s,2},\sup_{t}\|\bu^2\|_{s,2})$.
Here $(r^1,\vu^1)$ and $(r^2,\vu^2)$ are two strong solutions to \eqref{E3}--\eqref{E4}
with data $(r_0,\vu_0,\bff,\mathbb T^1)$ and $(r_0,\vu_0,\bff,\mathbb T^2)$ respectively.
\end{theorem}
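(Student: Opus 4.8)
The plan is to derive the difference estimate \eqref{est:thm:main'} directly from the equations satisfied by the two solutions, using energy methods at the regularity level $W^{s',2}_\bx$ with $s'\le s-1$, and then to read off uniqueness by specializing to $\mathbb{T}^1=\mathbb{T}^2$. Set $\bar r:=r^1-r^2$, $\bar\bu:=\bu^1-\bu^2$, $\bar{\mathbb{T}}:=\mathbb{T}^1-\mathbb{T}^2$. Subtracting the two continuity-type equations \eqref{E3} gives
\begin{equation*}
\partial_t\bar r + \bu^1\cdot\Grad\bar r + \tfrac{\gamma-1}{2}\bar r\,\Div\bu^1
= -\,\bar\bu\cdot\Grad r^2 - \tfrac{\gamma-1}{2}\,r^2\,\Div\bar\bu,
\end{equation*}
and subtracting the momentum equations \eqref{E4} gives
\begin{equation*}
\partial_t\bar\bu + \bu^1\cdot\Grad\bar\bu + r^1\Grad\bar r
= -\,\bar\bu\cdot\Grad\bu^2 - \bar r\,\Grad r^2
+ D(r^1)\divx\mathbb{S}(\nabx\bar\bu) + \big(D(r^1)-D(r^2)\big)\divx\mathbb{S}(\nabx\bu^2)
+ D(r^1)\divx\bar{\mathbb{T}} + \big(D(r^1)-D(r^2)\big)\divx\mathbb{T}^2.
\end{equation*}
The key structural point, exactly as in the proof of Theorem \ref{thm:main}, is that the principal part $(\partial_t + \bu^1\cdot\Grad)$ together with the coupling terms $r^1\Grad\bar r$ in the velocity equation and $\tfrac{\gamma-1}{2}r^1\Div\bar\bu$ (after replacing $\bu^1$-dependent coefficients appropriately) forms a symmetric hyperbolic system up to the parabolic regularization $D(r^1)\divx\mathbb{S}(\nabx\bar\bu)$, which is coercive since $r^1>0$ is bounded below on $[0,T]$.

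First I would apply $\partial_\bx^\alpha$ for $|\alpha|\le s'$ to both equations, test the first with $\partial_\bx^\alpha\bar r$ and the second with $\partial_\bx^\alpha\bar\bu$, integrate over $\tor$, and sum. The symmetric structure causes the two worst coupling terms $\int r^1\,\partial_\bx^\alpha\Grad\bar r\cdot\partial_\bx^\alpha\bar\bu$ and $\int \tfrac{\gamma-1}{2}r^1\,\partial_\bx^\alpha\Div\bar\bu\,\partial_\bx^\alpha\bar r$ to combine, after an integration by parts, into a term controlled by $\|\Grad r^1\|_{L^\infty}\|(\bar r,\bar\bu)\|_{W^{s',2}_\bx}^2$ — here one uses $s'+1\le s$ so $r^1\in W^{s',2}_\bx\hookrightarrow W^{1,\infty}_\bx$ by Sobolev embedding ($s>d/2+2$). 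The parabolic term yields $+c\,\mu^S\|\nabx^{s'+1}\bar\bu\|_{L^2_\bx}^2$ on the good side (plus a commutator $[\partial_\bx^\alpha,D(r^1)]\divx\mathbb{S}(\nabx\bu^2)$-type remainder handled by the Moser estimate \eqref{commutatorEstimate}), while the remaining right-hand side contributions are estimated by the Moser product and commutator inequalities \eqref{commutatorEstimate}--\eqref{commutatorEstimateContinuous}: the terms linear in $(\bar r,\bar\bu)$ produce factors $\|(r^1,r^2)\|_{W^{s'+1,2}_\bx}+\|(\bu^1,\bu^2)\|_{W^{s'+2,2}_\bx}$, the term $(D(r^1)-D(r^2))\divx\mathbb{T}^2$ produces $\|\mathbb{T}^2\|_{W^{s',2}_\bx}\|\bar r\|_{W^{s',2}_\bx}$ (using that $D\in C^\infty$ and $r^1,r^2$ are bounded away from $0$), and the source term $D(r^1)\divx\bar{\mathbb{T}}$ is integrated by parts onto $\bar\bu$ and absorbed via Young's inequality into the parabolic term, leaving $c\,\|\bar{\mathbb{T}}\|_{W^{s',2}_\bx}^2$. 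A subtlety to flag: the paper's stated bound \eqref{est:thm:main'} carries an extra weight $\|r^1\|_{W^{s'+1,2}_\bx}$ multiplying $\|\bar{\mathbb{T}}\|_{W^{s',2}_\bx}^2$ in the time integral; I would track that factor by not fully absorbing the $\bar{\mathbb{T}}$ contribution but instead estimating $\int D(r^1)\partial_\bx^\alpha\divx\bar{\mathbb{T}}\cdot\partial_\bx^\alpha\bar\bu$ more carefully, keeping the $r^1$-dependence of $D(r^1)$ explicit, so that the final Grönwall source is exactly $\int_0^T\|\bar{\mathbb{T}}\|_{W^{s',2}_\bx}^2\|r^1\|_{W^{s'+1,2}_\bx}\,\mathrm{d}t$.

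Collecting everything yields a differential inequality of the form
\begin{equation*}
\frac{\mathrm{d}}{\mathrm{d}t}\,\|(\bar r,\bar\bu)\|_{W^{s',2}_\bx}^2 + c\,\|\bar\bu\|_{W^{s'+1,2}_\bx}^2
\le C(t)\,\|(\bar r,\bar\bu)\|_{W^{s',2}_\bx}^2 + c\,\|\bar{\mathbb{T}}\|_{W^{s',2}_\bx}^2\,\|r^1\|_{W^{s'+1,2}_\bx},
\end{equation*}
where $C(t)\lesssim \|(r^1,r^2)\|_{W^{s'+1,2}_\bx}^2 + \|(\bu^1,\bu^2)\|_{W^{s'+2,2}_\bx} + \|\mathbb{T}^2\|_{W^{s',2}_\bx}^2 + 1$; note $C\in L^1(0,T)$ because $\bu^i\in L^2(0,T;W^{s+1,2}_\bx)$ with $s'+2\le s+1$ and $r^i\in C([0,T];W^{s,2}_\bx)$. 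Since $\bar r(0)=\bar\bu(0)=0$, Grönwall's lemma gives \eqref{est:thm:main'}. Uniqueness follows immediately: with $\mathbb{T}^1=\mathbb{T}^2$ the source vanishes and $(\bar r,\bar\bu)\equiv 0$ on $[0,T]$, where $T$ depends on the two solutions only through $\sup_t\|\bu^i\|_{s,2}$ via the lower bound on $r^i$ (preserved by the transport equation for $r^i$ on a short time interval) and via the $L^1$-in-time control of $C(t)$. The main obstacle is bookkeeping rather than conceptual: ensuring that every commutator and product term genuinely closes at the level $W^{s',2}_\bx$ with $s'\le s-1$ — i.e. that one never needs more than $s$ derivatives on $r^i,\bu^i$ — and extracting the precise $\|r^1\|_{W^{s'+1,2}_\bx}$ weight on the $\bar{\mathbb{T}}$ term so the estimate matches \eqref{est:thm:main'} exactly; the loss of one derivative (working at $s'\le s-1$ rather than $s$) is precisely what makes these product/commutator terms affordable, which is the standard price paid for a contraction estimate in a scale of spaces.
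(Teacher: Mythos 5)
Your proposal follows essentially the same route as the paper's proof: write the difference equations for $(r^1-r^2,\bu^1-\bu^2)$, apply $\partial^\alpha_\bx$ with $|\alpha|\le s'\le s-1$, test with the differences, exploit the symmetric coupling between $r\Grad r$ and $\tfrac{\gamma-1}{2}r\,\Div\bu$ (weighted by $\tfrac{\gamma-1}{2}$), absorb the viscous and $\divx\bar{\mathbb T}$ contributions via the parabolic term, control the commutators by the Moser estimates and the maximum-principle bounds on $r^i$ and $D(r^i)$, and conclude by Gr\"onwall, with uniqueness from $\mathbb T^1=\mathbb T^2$. The only differences are cosmetic bookkeeping (your grouping puts $r^2\Div\bar\bu$ where the paper uses $r^1\Div\bar\bu$, a lower-order discrepancy you flag, and the paper extracts the $\Vert\mathbb T^2\Vert_{W^{s',2}_\bx}$ coefficient by an integration by parts on $(D(r^1)-D(r^2))\divx\partial^\alpha_\bx\mathbb T^2$), so the argument is correct and matches the paper's.
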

The proof of Theorem \ref{thm:main'} can be found in Section \ref{sec:uniqNS}.

\subsection{The Galerkin approximation}
\label{subsec:galerkin}
We start with a regularized system which contains cut-offs in the nonlinear term to render them globally Lipschitz. It can be solved by a standard Galerkin approximation. To be precise,
we consider for $R\gg1$
the system
\begin{align} \label{E3'}
\partial_t r + \varphi_R (\|\vu \|_{2,\infty})\Big[\vu \cdot \Grad r \  + \tfrac{\gamma - 1}{2}\, r\, \Div \vu\Big] &= 0,\\
\label{E4'}
\partial_t \vu + \varphi_R(\| \vu \|_{2,\infty})\left[ \vu \cdot \Grad \vu + r \Grad r \right]  & =\varphi_R(\| \vu \|_{2,\infty}) D(r) \Div \mathbb{S} (\Grad \vu) \\& +\varphi_R(\| \vu \|_{2
,\infty}) D(r) \Div \mathbb{T}+\bff\nonumber,\\
r(0)=r_{0} ,\quad \bfu(0)&=\bfu_0,\label{approx:initial}
\end{align}
where $\varphi_R:[0,\infty)\rightarrow[0,1]$ are smooth cut-off functions satisfying
\begin{align*}
\varphi_R(y)=\begin{cases}1,\quad &0\leq y\leq R,\\
0,\quad & R+1\leq y.
\end{cases}
\end{align*}
To begin with, observe that for any $\vu \in C([0,T]; W^{2,\infty}(\mt))$, the transport equation (\ref{E3'}) admits a classical solution
$r = r[\vu]$, uniquely determined by the initial datum $r_{0}$.
 In addition, for a certain universal constant $c$, we have the estimates
\begin{equation} \label{est1}
\begin{split}
\frac{1}{R}
\exp \left( - cR t \right)&\leq
\exp \left( - cR t \right)  \inf_{\mt} r_0  \leq r(t, \cdot) \leq \exp \left(  cR t \right) \sup_{\mt} r_{0} \leq  R \exp \left(  cR t \right), \\
|\Grad r (t,\cdot) | & \leq \exp \left(  cR t \right)|\Grad r_0 |\leq
R \exp \left(  cR t \right) \ t \in [0,T].
\end{split}
\end{equation}

Next, we consider the orthonormal basis $\left\{ \bfpsi_m \right\}_{m=1}^\infty$ of the space $L^{2}(\mt)$ formed by trigonometric functions and set
\[
X_n = {\rm span} \left\{ \bfpsi_1, \dots, \bfpsi_n \right\}, \quad \mbox{with the associated projection}\ P_n : L^2 \to X_n.
\]
We look
for approximate solutions $\bu^n$ of \eqref{E4'} belonging to {$C([0,T]; X_n)$},
satisfying
\begin{equation} \label{est2}
\begin{split}
\partial_t\int_{\mt} \bu^n\cdot \bfpsi_i \dx &+ \varphi_R(\| \bu^n \|_{2,\infty})\int_{\mt} \Big[[\bu^n \cdot \Grad \bu^n + r^n \Grad r^n \Big]\cdot \bfpsi_i  \dx
\\
&=\varphi_R(\| \bu^n \|_{2,\infty}) \int_{\mt} D(r^n) \Div \mathbb{S} (\Grad \bu^n)\cdot \bfpsi_i \dx\\  &
+  \varphi_R(\| \bu^n \|_{2,\infty}) \int_{\mt} D(r^n) \Div \mathbb{T} \cdot \bfpsi_i \dx +\int_{\mt}\bff\cdot\bfpsi\dx,\quad i = 1, \dots, n.\\
\bu^n(0)&=\bu_0^n:=P_n \bfu_0.
\end{split}
\end{equation}
Here $r^n=r[\bu^n]$ is the solution to \eqref{E3'} with $\bu=\bu^n$.
As all norms on $X_n$ are equivalent, solutions of \eqref{est2} can be obtained in a standard way by means of the Banach fixed point argument.
Specifically, we have to show that the mapping
\[
\vu \mapsto \mathscr{T} \vu : X_n \to X_n,
\]
\begin{align} \nonumber
\int_{\mt} \mathscr{T}\vu\cdot \bfpsi_i \dx =& \int_{\mt} \vu_0\cdot \bfpsi_i \dx- \int_0^\cdot\varphi_R(\| \vu \|_{2,\infty})\int_{\mt}  \Big[\vu\cdot \Grad \vu + r[\vu] \Grad r[\vu ] \Big]\cdot \bfpsi_i   \dx\dt
\\
&+\int_0^\cdot\varphi_R(\| \vu \|_{2,\infty})\int_{\mt} D(r[\vu]) \Div \mathbb{S} (\Grad \vu)\cdot \bfpsi_i \dx\dt\label{est3}\\  &
+  \int_0^\cdot\varphi_R(\| \vu \|_{2,\infty}) \int_{\mt}  D(r[\vu]) \Div \mathbb{T}\cdot \bfpsi_i \dx\dt+\int_0^\cdot\int_{\mt}\bff\cdot\bfpsi\dx\dt ,\quad i = 1, \dots, n.\nonumber
\end{align}
is a contraction on $\mathcal B=C([0,T^\ast]; X_n)$ for $T^\ast$ sufficiently small. 
For $r_1 = r[\vc{v}_1]$, $r_2 = r[\vc{v}_2]$, we get
\begin{equation*} \label{T8}
\begin{split}
\D (r_1 - r_2) &+ \vc{v}_1 \cdot \Grad (r_1 - r_2) \dt - \frac{\gamma - 1}{2} \Div \vc{v}_1 (r_1 - r_2) \dt \\
&= - \Grad r_2 \cdot (\vc{v}_1 - \vc{v}_2) - \frac{\gamma - 1}{2}  r_2 \Div (\vc{v}_1 - \vc{v}_2) \dt,
\end{split}
\end{equation*}
where we have set
\[
\vc{v}_1 = \varphi_R(\| \vu_1 \|_{2,\infty}) \vu_1 , \ \vc{v}_2 = \varphi_R(\| \vu_2 \|_{2,\infty})\vu_2.
\]
Consequently, we easily deduce that
\begin{align}\label{eq:new}
\sup_{0\leq t\leq T^\ast}\big\|r[\vu_1 ]-r[\vu_2 ]\big\|^2_{L^2}\leq T^\ast c(n,R,T)\sup_{0\leq t\leq T^\ast} \big\| \vu_1 - \vu_2 \big\|_{X_n}^2
\end{align}
noting that $r_1$, $r_2$ coincide at $t=0$ and that $r_j$, $\Grad r_j$ are bounded by a constant depending on $R$, recall \eqref{est1}.
As a consequence of \eqref{est1}, \eqref{eq:new} and the equivalence of norms on $X_n$ we can show that the mapping
$\mathscr T$ satisfies the estimate
\begin{align}\label{Tdet}
\|\mathscr{T}\vu_1 - \mathscr{T}\vu_2 \|_\mathcal{B}^2\leq T^{\ast}c(n,R,T)\| \vu_1 - \vu_2 \|_{\mathcal B}^2.
\end{align}
The inequality \eqref{Tdet} shows that
$\mathscr T$ is a contraction provided we choose $T^\ast> 0$ small enough. 
Due to the cut-offs all nonlinearities in \eqref{est2} are globally Lipschitz continuous. Taking also into account that all norms on the finite-dimensional space $X_n$ are equivalent it is easy to show that
\begin{align*}
\sup_{0\leq t\leq T^*}\|\bfu^n\|_{X_n}^2\leq\, c(n,R,T,\bff,\mathbb T,\bfu^n_0)
\end{align*}
using $\bfu^n$ as  test-function.
Consequently, a continuity argument can be used to extend the solution $(r^n,\bu^n)$ to the whole interval $[0,T]$.

\subsection{A priori estimates}
\label{sec:exisNS}
Consider the solution $(r^n,\bu^n)$ to \eqref{E3'}, \eqref{est2} constructed in the previous section.
From the maximum principle, we gain the estimate 
\begin{align}
\label{maxPrinciple}
 r^n(t,\bx)\leq \sup_{\bx \in \tor} r_0(\bx)\exp\bigg(\int_0^t\varphi_R (\|\vu^n \|_{2,\infty})\|\nabx \bu^n \|_{L^\infty_{\mathbf{x}}}\dt\bigg)\leq \sup_{\bx \in \tor} r_0(\bx) \exp(cR\,t) \leq K_R
\end{align}
as well as
\begin{align}
\label{minPrinciple}
 r^n(t,\bx)\geq \inf_{\bx \in \tor} r_0(\bx)\exp\bigg(-\int_0^t\varphi_R (\|\vu^n \|_{2,\infty})\|\nabx \bu^n \|_{L^\infty_{\mathbf{x}}}\dt\bigg)\geq \inf_{\bx \in \tor} r_0(\bx) \exp(-cR\,t) \geq \frac{1}{K_R}
\end{align}
hold for all $(t,\bx)\in [0,T] \times \mt$ with a constant $K_R$. 
Similarly, we have
\begin{align}
\label{maxPrinciple2}
\begin{aligned}
 |\nabx r^n(t,\bx)|&\leq \sup_{\bx \in \tor} |\nabx r_0(\bx)|\exp\bigg(\int_0^t\varphi_R (\|\vu^n \|_{2,\infty})\| \bu^n \|_{W^{2,\infty}_{\mathbf{x}}}\dt\bigg)\\&\leq \sup_{\bx \in \mt} r_0(\bx) \exp(cR\,t) \leq K_R.
 \end{aligned}
\end{align}
Furthermore, per the definition \eqref{transfRhoToR} and the pressure law \eqref{isentropicPressure}, it follows from \eqref{maxPrinciple}--\eqref{maxPrinciple2} that
\begin{align}
\label{lipschitzDR}
\Vert D(r^n) \Vert_{W^{1,\infty}_\bx} + \Vert D(r^n)^{-1} \Vert_{L^{\infty}_\bx} \leq \,cK_R.
\end{align}
Additionally, it follows from \eqref{commutatorEstimateContinuous} and \eqref{maxPrinciple} that
\begin{align}
\label{dOfr}
\Vert D(r^n) \Vert_{W^{s,2}_\bx} \leq\,cK_R\,\Vert r^n\Vert_{W^{s,2}_\bx}.
\end{align}
With the above preparation, we now derive uniform a priori bounds for the solution $(r^n, \bu^n)$ of the coupled mass and momentum balance equations \eqref{E3'}, \eqref{est2} given $(r_0, \mathbf{u}_0, \mathbb T, \mathbf{f})$ as assumed in Theorem \ref{thm:main}. In particular, we suppose
$s > \frac{d}{2}+2$. In what follows, the constants hidden in $\lesssim$  depend on $R$
only through the constant $K_R$ from \eqref{maxPrinciple}--\eqref{dOfr} but are otherwise independent of $R$.
We proceed by applying $\partial^\alpha_\mathbf{x}$ to \eqref{E3'}, \eqref{est2}. This yield
\begin{equation}
\begin{aligned}
\label{contEqMultiIndex}
\partial_t \partial^\alpha_\mathbf{x} r^n  + \varphi_R (\|\vu^n \|_{2,\infty})\bigg[\mathbf{u}^n   \cdot \nabx  \partial^\alpha_\mathbf{x} r  ^n &+ \frac{\gamma-1}{2} r^n\, \divx   \partial^\alpha_\mathbf{x} \bu^n\bigg]
= J_1^n  + J_2^n 
\end{aligned}
\end{equation}
in $\mathcal Q_T$, where
\begin{align*}
J_1^n &= \varphi_R (\|\vu^n \|_{2,\infty})\bigg[\bu^n \cdot \partial^\alpha_\mathbf{x} \nabx    r^n - \partial^\alpha_\mathbf{x}(\bu^n \cdot \nabx   r^n)\bigg],\\
J_2^n &= \varphi_R (\|\vu^n \|_{2,\infty})\bigg[\frac{\gamma-1}{2}
\big( 
r^n   \partial^\alpha_\mathbf{x} \divx   \bu^n  - \partial^\alpha_\mathbf{x}(r^n \divx   \bu^n )  \big)\bigg],
\end{align*}
and
\begin{equation}
\begin{aligned}
\label{momEquMultiIndex}
\partial_t \partial^\alpha_\mathbf{x} \bu^n  + \varphi_R (\|\vu^n \|_{2,\infty})\big[\bu^n  \cdot \nabx  \partial^\alpha_\mathbf{x} \bu^n   &+ r^n\nabx   \partial^\alpha_\mathbf{x}r^n - D(r^n)\divx   \mathbb{S}(\nabx   \partial^\alpha_\mathbf{x} \bu^n )  
- 
D(r^n )\divx   \partial^\alpha_\mathbf{x} \mathbb{T}\big]
\\
&
=
 \partial^\alpha_\mathbf{x} \mathbf{f} + I_1^n  + I_2^n  + I_3^n  + I_4^n 
\end{aligned}
\end{equation}
in $X_n'$, where
\begin{align*}
&I_1^n  = \varphi_R (\|\vu^n \|_{2,\infty})\big[\bu^n  \cdot \partial^\alpha_\mathbf{x} \nabx   \bu^n   - \partial^\alpha_\mathbf{x}(\bu^n  \cdot \nabx   \bu^n  ) \big],
\\
&I_2^n  = \varphi_R (\|\vu^n \|_{2,\infty})\big[r^n\partial^\alpha_\mathbf{x} \nabx   r^n
-
 \partial^\alpha_\mathbf{x}(r^n   \nabx   r^n )\big],
 \\
&I_3^n  =\varphi_R (\|\vu^n \|_{2,\infty})\big[ - D(r^n ) \partial^\alpha_\mathbf{x}  \divx   \mathbb{S}(\nabx   \bu^n  ) + \partial^\alpha_\mathbf{x} \big( D(r^n )  \divx   \mathbb{S}(\nabx   \bu^n  ) \big)\big],
 \\
&I_4^n  = \varphi_R (\|\vu^n \|_{2,\infty})\big[- D(r^n ) \partial^\alpha_\mathbf{x}  \divx   \mathbb{T} + \partial^\alpha_\mathbf{x} \big( D(r^n )  \divx   \mathbb{T} \big) \big],
\end{align*}
respectively. 
We obtain from \eqref{commutatorEstimate} the estimates
\begin{equation}
\begin{aligned}
\label{j1}
\Vert J_1^n \Vert_{L^2_\bx} 
&\lesssim \varphi_R (\|\vu^n \|_{2,\infty})\Big(
\big\Vert \nabx  \bu^n   \big\Vert_{L^\infty_\bx}
\big\Vert \nabx  ^s  r^n   \big\Vert_{L^2_\bx}
+
\big\Vert \nabx   r^n  \big\Vert_{L^\infty_\bx}
\big\Vert \nabx  ^s \bu^n  \big\Vert_{L^2_\bx}\Big),
\end{aligned}
\end{equation}
 and
\begin{equation}
\begin{aligned}
\label{j2}
\Vert J_2^n  \Vert_{L^2_\bx} 
&\lesssim \varphi_R (\|\vu^n \|_{2,\infty})\Big(
\big\Vert \nabx   r^n  \big\Vert_{L^\infty_\bx}
\big\Vert \nabx  ^s  \bu^n   \big\Vert_{L^2_\bx}
+
\big\Vert \divx   \bu^n   \big\Vert_{L^\infty_\bx}
\big\Vert \nabx  ^s  r^n  \big\Vert_{L^2_\bx}\Big),
\end{aligned}
\end{equation}
for the right-hand side of \eqref{contEqMultiIndex}. We also obtain the estimates
\begin{equation}
\begin{aligned}
\label{i123}
\Vert I_1^n  \Vert_{L^2_\bx} 
&\lesssim \varphi_R (\|\vu^n \|_{2,\infty})
\big\Vert \nabx  \bu^n   \big\Vert_{L^\infty_\bx}
\big\Vert \nabx  ^{s}\bu^n   \big\Vert_{L^2_\bx},
\\
\Vert I_2^n  \Vert_{L^2_\bx} 
&\lesssim \varphi_R (\|\vu^n \|_{2,\infty})
\big\Vert \nabx   r^n  \big\Vert_{L^\infty_\bx}
\big\Vert \nabx  ^{s} r^n  \big\Vert_{L^2_\bx},
\\
\Vert I_3^n  \Vert_{L^2_\bx} 
&\lesssim\varphi_R (\|\vu^n \|_{2,\infty})\Big(
\big\Vert \nabx   D(r^n ) \big\Vert_{L^\infty_\bx}
\big\Vert  \nabx  ^{s} \mathbb{S}(\nabx   \bu^n  ) \big\Vert_{L^2_\bx}
+
\big\Vert \divx   \mathbb{S}(\nabx   \bu^n  )  \big\Vert_{L^\infty_\bx}
\big\Vert \nabx  ^s D(r^n ) \big\Vert_{L^2_\bx} \Big),
\end{aligned}
\end{equation}
for the indicated terms on the right-hand side of \eqref{momEquMultiIndex}. For the last term in \eqref{momEquMultiIndex} we have
\begin{equation}
\begin{aligned}
\label{i4}
\Vert I_4^n  \Vert_{L^2_\bx} 
&\leq \varphi_R (\|\vu^n \|_{2,\infty})\Big(
\big\Vert \nabx   D(r^n) \big\Vert_{L^\infty_\bx}
\big\Vert  \nabx  ^{s} \mathbb{T} \big\Vert_{L^2_\bx}
+
\big\Vert \divx   \mathbb{T}  \big\Vert_{L^\infty_\bx}
\big\Vert \nabx  ^s D(r^n) \big\Vert_{L^2_\bx}\Big).
\end{aligned}
\end{equation}
 Now if we multiply \eqref{contEqMultiIndex} by $ \partial^\alpha_\mathbf{x} r^n$  and integrate by parts when necessary, we gain
\begin{equation}
\begin{aligned}
\label{conEqMultiIndex1}
\frac{\mathrm{d}}{\mathrm{d}t}\int_{\mt}& \big\vert \partial^\alpha_\mathbf{x} r^n  \big\vert^2 \, \mathrm{d}\mathbf{x}
+ (\gamma -1)\varphi_R (\|\vu^n \|_{2,\infty})
\int_{\mt}   r^n  \divx   \partial^\alpha_\mathbf{x} \bu^n   \,\partial^\alpha_\mathbf{x} r^n  \, \mathrm{d}\mathbf{x}
\\&
= \varphi_R (\|\vu^n \|_{2,\infty})\int_{\mt} \big\vert \partial^\alpha_\mathbf{x} r^n   \big\vert^2 \divx   \bu^n   \, \mathrm{d}\mathbf{x}
+2
\int_{\mt} ( J_1^n  + J_2^n ) \,  \partial^\alpha_\mathbf{x} r^n  \, \mathrm{d}\mathbf{x}.
\end{aligned}
\end{equation}
It follows from \eqref{j1}, \eqref{j2} and \eqref{maxPrinciple2} that,
\begin{equation}
\begin{aligned}
\label{conEqMultiIndex2}
 \varphi_R (\|\vu^n \|_{2,\infty})&\int_{\mt} \big\vert \partial^\alpha_\mathbf{x} r^n   \big\vert^2 \divx   \bu^n   \, \mathrm{d}\mathbf{x}
+2
\int_{\mt} ( J_1^n  + J_2^n ) \,  \partial^\alpha_\mathbf{x} r^n \, \mathrm{d}\mathbf{x}\\
&= 
\int_{\mt} \partial^\alpha_\mathbf{x} r^n   \big(\varphi_R (\|\vu^n \|_{2,\infty}) \partial^\alpha_\mathbf{x} r^n  \divx   \bu^n   
+2
 ( J_1^n  + J_2^n )\big) \, \mathrm{d}\mathbf{x} 
\\
&\lesssim\varphi_R (\|\vu^n \|_{2,\infty})
 \big\Vert \partial^\alpha_\mathbf{x} r^n  \big\Vert_{L^2_\mathbf{x}} \Big( \Vert \bu^n   \Vert_{W^{1,\infty}_\mathbf{x}} \Vert r^n  \Vert_{W^{s,2}_\mathbf{x}}
+\Vert r^n  \Vert_{W^{1,\infty}_\mathbf{x}} \Vert \bu^n   \Vert_{W^{s,2}_\mathbf{x}} \Big)\\
&\lesssim
 R\big\Vert(r^n,\bu^n)  \big\Vert_{W^{s,2}_\mathbf{x}}^2
\end{aligned}
\end{equation}
with a constant depending only on $s,d$, $T$ and $K_R$. 
By substituting \eqref{conEqMultiIndex2} into \eqref{conEqMultiIndex1}, integrating in time and summing over $\alpha$ such that $\vert\alpha\vert\leq s$, we obtain for any $t>0$,
\begin{equation}
\begin{aligned}
\label{conEqMultiIndex3}
 \Vert  r^n (t) \Vert^2_{W^{s,2}_\mathbf{x}} 
&+ (\gamma -1)\varphi_R (\|\vu^n \|_{2,\infty}) \sum_{\vert \alpha\vert \leq s}
\inttO   r^n  \,\divx   \partial^\alpha_\mathbf{x} \bu^n   \,\partial^\alpha_\mathbf{x} r^n  \, \mathrm{d}\mathbf{x} \, \mathrm{d}\sigma\\
&\lesssim
\Vert  r_0 \Vert^2_{W^{s,2}_\mathbf{x}} 
+
R\int_0^t \Vert (r^n , \bu^n  ) \Vert^2_{W^{s,2}_\mathbf{x}}   \, \mathrm{d}\sigma.
\end{aligned}
\end{equation}
Now if we test \eqref{momEquMultiIndex} by $\partial^\alpha_\mathbf{x} \bu^n\in X_n$, integrate by parts where necessary and then integrate in time, we gain
\begin{equation}
\begin{aligned}
\label{momEqMultiIndex3}
   \big\Vert \partial^\alpha_\mathbf{x} \bu^n  (t) \big\Vert^2_{L^2_\mathbf{x}}
&-2\varphi_R (\|\vu^n \|_{2,\infty})
\inttO  r^n  \, \divx   \partial^\alpha_\mathbf{x} \bu^n   \, \partial^\alpha_\mathbf{x} r^n \, \mathrm{d}\mathbf{x}\, \mathrm{d}\sigma\\
&+2\varphi_R (\|\vu^n \|_{2,\infty})
\inttO  D(r^n ) \,
\mathbb{S}( \nabx   \partial^\alpha_\mathbf{x} \bu^n  )  : \nabx  \partial^\alpha_\mathbf{x} \bu^n    \, \mathrm{d}\mathbf{x} \, \mathrm{d}\sigma
\\&
=
 \big\Vert \partial^\alpha_\mathbf{x} \bu_0^n   \big\Vert^2_{L^2_\mathbf{x}}
 +
\varphi_R (\|\vu^n \|_{2,\infty})\inttO   \big\vert \partial^\alpha_\mathbf{x} \bu^n   \big\vert^2 \divx   \bu^n   \, \mathrm{d}\mathbf{x}\, \mathrm{d}\sigma\\
&-2\varphi_R (\|\vu^n \|_{2,\infty})\inttO  \nabx   D(r^n )  \partial^\alpha_\mathbf{x} \mathbb{T} \cdot \partial^\alpha_\mathbf{x} \bu^n    \, \mathrm{d}\mathbf{x} \, \mathrm{d}\sigma
\\&-2
\varphi_R (\|\vu^n \|_{2,\infty})\inttO  D(r^n ) \, \partial^\alpha_\mathbf{x} \mathbb{T}
 : \nabx  \partial^\alpha_\mathbf{x} \bu^n    \, \mathrm{d}\mathbf{x} \, \mathrm{d}\sigma\\
&-
2\varphi_R (\|\vu^n \|_{2,\infty})\inttO  \nabx   D(r^n ) \mathbb{S}( \nabx   \partial^\alpha_\mathbf{x} \bu^n  ) \cdot \partial^\alpha_\mathbf{x} \bu^n    \, \mathrm{d}\mathbf{x} \, \mathrm{d}\sigma
\\&+2\varphi_R (\|\vu^n \|_{2,\infty})
\inttO  \nabx   r^n  \cdot \partial^\alpha_\mathbf{x} r^n  \partial^\alpha_\mathbf{x} \bu^n   \, \mathrm{d}\mathbf{x} \, \mathrm{d}\sigma\\
&+
2\inttO (   \partial^\alpha_\mathbf{x}\mathbf{f} + I_1^n  + I_2^n  + I_3^n +I_4^n ) \cdot  \partial^\alpha_\mathbf{x} \bu^n   \, \mathrm{d}\mathbf{x} \, \mathrm{d}\sigma
\\&=: K_1^n  + \ldots + K_7^n.
\end{aligned}
\end{equation}
We can estimate the $K_i^n$s  as follows. Firstly, we have by \eqref{lipschitzDR}
\begin{equation}
\begin{aligned}
\label{k2est}
\vert K_2^n \vert 
 \lesssim
R\int_0^t  \big\Vert \partial^\alpha_\mathbf{x} \bu^n   \big\Vert^2_{L^2_\mathbf{x}}
 \, \mathrm{d}\sigma.
\end{aligned}
\end{equation}
Furthermore, we can estimate $K_3^n$ using \eqref{maxPrinciple} by
\begin{equation}
\begin{aligned}
\vert K_3^n \vert 
&\lesssim \int_0^t\Vert   \mathbb T\Vert_{W^{s,2}_\bx}\Vert  r^n\Vert_{W^{1,\infty}_\bx}\Vert   \vu^n \Vert_{W^{s,2}_\bx} \,\mathrm{d}\sigma \lesssim
 \int_0^t
\Vert   \mathbb T\Vert_{W^{s,2}_\bx}^2 \, \mathrm{d}\sigma 
+
\int_0^t \Vert  (r^n,\bu^n)   \Vert_{W^{s,2}_\mathbf{x}}^2\,\mathrm{d}\sigma.
\end{aligned}
\end{equation}
Also, by \eqref{maxPrinciple2} we have that
\begin{equation}
\begin{aligned}
\vert K_4^n \vert &\leq\,c K_R
\inttO |  \partial^\alpha_\mathbf{x} \mathbb{T}|  |\nabx  \partial^\alpha_\mathbf{x} \bu^n|   \, \mathrm{d}\mathbf{x} \, \mathrm{d}\sigma 
\leq
c_\delta K_R^2
 \int_0^t
\Vert   \mathbb T\Vert_{W^{s,2}_\mathbf{x}}^2 \, \mathrm{d}\sigma  
+ \delta  \int_0^t
\Vert   \bu^n\Vert_{W^{s+1,2}_\mathbf{x}}^2 \, \mathrm{d}\sigma  
\end{aligned}
\end{equation}
where $\delta>0$ is arbitrary.
By using \eqref{lipschitzDR}, 
\begin{equation}
\begin{aligned}
\label{k4est}
\vert K_5^n \vert &\leq\,cK_R
\inttO  | \mathbb{S}( \nabx  \partial^\alpha_\mathbf{x} \bu^n) |  \, | \partial^\alpha_\mathbf{x} \bu^n|   \, \mathrm{d}\mathbf{x} \, \mathrm{d}\sigma 
\leq \delta  \int_0^t
\Vert   \bu^n\Vert_{W^{s+1,2}_\mathbf{x}}^2 \, \mathrm{d}\sigma  
+ c_\delta K_R^2
\int_0^t   \Vert  \bu^n   \Vert_{W^{s,2}_\mathbf{x}}^2   \, \mathrm{d}\sigma
\end{aligned}
\end{equation}
whereas by \eqref{maxPrinciple2}
\begin{equation}
\begin{aligned}
\vert K_6^n \vert
&\lesssim
\int_0^t 
 \Big( \big\Vert \partial^\alpha_\mathbf{x} r^n  \big\Vert_{L^2_\mathbf{x}}^2
 +
\Vert  \partial^\alpha_\mathbf{x} \bu^n   \Vert^2_{L^{2}_\mathbf{x}} \Big)
\Vert \nabx   r^n   \Vert_{L^\infty_\mathbf{x}}
 \, \mathrm{d}\sigma 
 \lesssim \int_0^t \Vert (r^n , \bu^n  ) \Vert^2_{W^{s,2}_\mathbf{x}}   \, \mathrm{d}\sigma.
\end{aligned}
\end{equation}
Finally, we use \eqref{i123}--\eqref{i4} to obtain the following estimate
\begin{equation}
\begin{aligned}
\vert K_7^n \vert &\lesssim  
\int_0^t \varphi_R (\|\vu^n \|_{2,\infty})\Vert  \bu^n   \Vert_{W^{s,2}_\mathbf{x}} \Big(
\Vert  \mathbf{f} \Vert_{W^{s,2}_\mathbf{x}}
+
\Vert  I_1^n  \Vert_{L^{2}_\mathbf{x}} 
+
\Vert I_2^n  \Vert_{L^{2}_\mathbf{x}} 
+
\Vert  I_3^n  \Vert_{L^{2}_\mathbf{x}} 
+
\Vert  I_4^n  \Vert_{L^{2}_\mathbf{x}}  \Big)  \, \mathrm{d}\sigma
\\&
 \lesssim  
\int_0^t \varphi_R (\|\vu^n \|_{2,\infty})\Vert  \bu^n   \Vert_{W^{s,2}_\mathbf{x}} \Big(
\Vert  \mathbf{f} \Vert_{W^{s,2}_\mathbf{x}}
+
\Vert \bu^n   \Vert_{W^{1,\infty}_\mathbf{x}}
\Vert \bu^n   \Vert_{W^{s,2}_\mathbf{x}}
+
\Vert r^n  \Vert_{W^{1,\infty}_\mathbf{x}}
\Vert  r^n  \Vert_{W^{s,2}_\mathbf{x}}
\\&
+
\big\Vert \nabx   D(r^n ) \big\Vert_{L^\infty_\mathbf{x}}
\big\Vert   \mathbb{S}(\nabx   \bu^n  ) \big\Vert_{W^{s,2}_\mathbf{x}}
+
\big\Vert \divx   \mathbb{S}(\nabx   \bu^n  )  \big\Vert_{L^\infty_\mathbf{x}}
\big\Vert  D(r^n ) \big\Vert_{W^{s,2}_\mathbf{x}}   
\\&
+
\big\Vert \nabx   D(r^n ) \big\Vert_{L^\infty_\mathbf{x}}
\big\Vert   \mathbb{T} \big\Vert_{W^{s,2}_\mathbf{x}}
+
\big\Vert \divx   \mathbb{T}  \big\Vert_{L^\infty_\mathbf{x}}
\big\Vert  D(r^n ) \big\Vert_{W^{s,2}_\mathbf{x}}  \Big)
\, \mathrm{d}\sigma.
\end{aligned}
\end{equation}
It therefore follows from  \eqref{maxPrinciple}--\eqref{dOfr} that for any $\delta>0$, we can find $c_\delta=c_\delta(s,d,K_R)>0$ such that
\begin{equation}
\begin{aligned}
\label{k6est}
\vert K_7^n \vert 
 \leq  c_\delta
\int_0^t 
\Vert  \mathbf{f} \Vert_{W^{s,2}_\mathbf{x}}^2
\, \mathrm{d}\sigma+c_\delta R^2
\int_0^t  
\Vert (r^n, \bu^n ) \Vert^2_{W^{s,2}_\mathbf{x}}  
\, \mathrm{d}\sigma
+c_\delta
\int_0^t  
\Vert   \mathbb T\Vert_{W^{s,2}_\mathbf{x}}^2
\, \mathrm{d}\sigma
+
\delta \int_0^t  
\Vert   \bu^n\Vert_{W^{s+1,2}_\mathbf{x}}^2
\, \mathrm{d}\sigma.
\end{aligned}
\end{equation}
If we now substitute \eqref{k2est}--\eqref{k6est} into \eqref{momEqMultiIndex3} with a small enough choice of $\delta$, sum over $\alpha$ such that $\vert\alpha\vert\leq s$ and use again \eqref{maxPrinciple}, we obtain
\begin{equation}
\begin{aligned}
\label{momEqMultiIndex4}
  \Vert \bu^n (t) \Vert^2_{W^{s,2}_\mathbf{x}}
&-
2\varphi_R (\|\vu^n \|_{2,\infty})\sum_{\vert \alpha\vert \leq s}
\inttO  r^n \, \divx   \partial^\alpha_\mathbf{x} \bu^n  \, \partial^\alpha_\mathbf{x} r^n\, \mathrm{d}\mathbf{x}\, \mathrm{d}\sigma
+
\int_0^t \Vert \bu^n \Vert^2_{W^{s+1,2}_\mathbf{x}}\mathrm{d}\sigma
\\&
\lesssim  
\Vert \bu^n_0  \Vert^2_{W^{s,2}_\mathbf{x}}
+
\int_0^t  \Big(
\Vert  \mathbf{f} \Vert_{W^{s,2}_\mathbf{x}}^2
+
\Vert   \mathbb T\Vert_{W^{s,2}_\mathbf{x}}^2
\Big)
 \, \mathrm{d}\sigma +R^2\int_0^t
\Vert (r^n, \bu^n ) \Vert^2_{W^{s,2}_\mathbf{x}}   \, \mathrm{d}\sigma.
\end{aligned}
\end{equation}
Now if we  multiply \eqref{momEqMultiIndex4}  by $\frac{\gamma -1}{2}$ (which is always positive) and  sum the resulting inequality with \eqref{conEqMultiIndex3}, we obtain
\begin{equation}
\begin{aligned}
\label{sumEst}
   &\big\Vert \big(r^n(t), \bu^n (t)\big) \big\Vert^2_{W^{s,2}_\mathbf{x}}
+
\int_0^t \Vert \bu^n \Vert^2_{W^{s+1,2}_\mathbf{x}}\mathrm{d}\sigma
\\&\quad\lesssim
 \big\Vert \big(r^n_0, \bu^n_0  \big) \big\Vert^2_{W^{s,2}_\mathbf{x}}+\int_0^t  \Big(
\Vert  \mathbf{f} \Vert_{W^{s,2}_\mathbf{x}}^2
+
\Vert   \mathbb T\Vert_{W^{s,2}_\mathbf{x}}^2
\Big)
 \, \mathrm{d}\sigma +R^2\int_0^t
\Vert (r^n, \bu^n ) \Vert^2_{W^{s,2}_\mathbf{x}}   \, \mathrm{d}\sigma
 \end{aligned}
\end{equation}
with a constant $c$ depending only on $\gamma,\delta, s,d$ and $K_R$. Now since 
\begin{align*}
g_1(t):= c \bigg[
 \big\Vert \big(r_0, \mathbf{u}_0  \big) \big\Vert^2_{W^{s,2}_\mathbf{x}}
+ 
\int_0^t  \Big(
\Vert  \mathbf{f} \Vert_{W^{s,2}_\mathbf{x}}^2
+
\Vert   \mathbb T\Vert_{W^{s,2}_\mathbf{x}}^2\Big)
 \, \mathrm{d}\sigma
 \bigg]
\end{align*}
is non-decreasing in $t$,  it follows from Gronwall's lemma that
\begin{equation}
\begin{aligned}
\label{gronwellRandV}
&\sup_{t\in (0,T)}  \big\Vert (r^n, \bu^n  ) \big\Vert^2_{W^{s,2}_\mathbf{x}}
+ \int_0^T\Vert    \bu^n\Vert_{W^{s+1,2}_\mathbf{x}}^2\dt 
\leq g_1(T) \exp(cR^2T)
\end{aligned}
\end{equation}
for any given $T>0$. Recall that the constant $c$ in $g_1$ only depends on $R$ via the constant $K_R$
from \eqref{maxPrinciple}--\eqref{dOfr}.
It is now standard to pass to the limit in \eqref{E3'}, \eqref{est2} in order to obtain a global-in-time solution $(r_R,\bfu_R)$ to \eqref{E3'}, \eqref{est2} with
\begin{align*}
r_R \in  C([0,T]; W^{s,2}(\tor)), \ r>0,\quad
 \vu_R \in C([0,T]; W^{s,2}(\tor;\mathbb{R}^d))\cap L^2(0,T;W^{s+1,2}(\mt;\mathbb{R}^d)).
\end{align*}
For any fixed $R> \Vert\bu_0   \Vert_{W^{s,2}_\mathbf{x}} $ we can find $T_0\ll1$ such that
\begin{align}
\label{lipschitzRV0}
\Vert\bu_R(t)   \Vert_{W^{s,2}_\mathbf{x}} 
\leq R
\end{align}
for all $t\leq T_0$. Consequently, $(r_R,\bfu_R)$ is a solution to 
\eqref{E3}--\eqref{E4} as the cut-offs are not seen. Moreover, we can assume (by further decreasing $T_0$ if necessary) that the constant $K_R$ in the limit version of \eqref{maxPrinciple}--\eqref{dOfr} can be replaced by a uniform constant that is 
independent of $R$. This implies the required a priori estimate
 and finishes the proof of Theorem \ref{thm:main}. Note that the constant in the energy estimate \eqref{est_thm:main} does not depend on the initial datum. However, the maximal existence time $T_0$ does.

\subsection{Difference estimate for fluid system}
\label{sec:uniqNS}
The purpose of this subsection is to show uniqueness of solutions to \eqref{E3}--\eqref{E4}
and hence
prove Theorem \ref{thm:main'}. Based on the estimates from the previous subsection (that is Theorem \ref{thm:main}) the constructed solutions possess enough regularity provided the existence interval is chosen small enough.
Let $(r^i,\bu^i)$, $i=1,2$ be two solutions of \eqref{E3}--\eqref{E4} with data $(r_0,\vu_0,\bff,\mathbb T^1)$ and $(r_0,\vu_0,\bff,\mathbb T^2)$ respectively defined in an interval $[0,T_0]$.
We set
\begin{align}\label{RR}
\mathfrak R:=\max\bigg\{\sup_{0<t<T_0}\|\bu^1\|_{s,2},\sup_{0<t<T_0}\|\bu^2\|_{s,2}\bigg\}.
\end{align}
We obtain versions of \eqref{maxPrinciple}--\eqref{dOfr} for $r^1$ and $r^2$ with a constant $K_{\mathfrak R}$. In particular, we have for $r=r^1,r^2$
\begin{align}
\label{maxPrincipleR}
 &\frac{1}{K_{\mathfrak R}}\leq \inf_{\bx \in \tor} r_0(\bx) \exp(-c\mathfrak R\,t) \leq r(t,\bx)\leq \sup_{\bx \in \tor} r_0(\bx) \exp(c\mathfrak R\,t) \leq K_{\mathfrak R},\\
\label{maxPrinciple2R}
 &\qquad\qquad|\nabx r(t,\bx)|\leq \sup_{\bx \in \mt} r_0(\bx) \exp(c\mathfrak R\,t) \leq K_{\mathfrak R},\\
\label{lipschitzDRR}
&\Vert D(r) \Vert_{W^{1,\infty}_\bx} + \Vert D(r)^{-1} \Vert_{L^{\infty}_\bx} \leq \,cK_{\mathfrak R},\quad
\Vert D(r) \Vert_{W^{s,2}_\bx} \leq\,cK_{\mathfrak R}\,\Vert r\Vert_{W^{s,2}_\bx}.
\end{align}
In the following we derive estimates for the difference of $r^1$ and $r^2$. As in Section \ref{sec:exisNS}, the constants  only depend on $\mathfrak R$ via $K_{\mathfrak R}$ but are otherwise independent of $\mathfrak R$.\\
Set $r^{12}=r^1 -r^2$, $\bu^{12}=\bu^1 -\bu^2$,  $\mathbb{T}^{12}=\mathbb{T}^1 -\mathbb{T}^2$ so that  $(r^{12}, \bu^{12})$ satisfies
\begin{align}
&\partial_t r^{12} + \mathbf{u}^{12}\cdot \nabx   r^1 + \mathbf{u}^2\cdot \nabx   r^{12} +\frac{\gamma -1}{2} r^1\, \divx   \mathbf{u}^{12} +\frac{\gamma -1}{2} r^{12}\, \divx   \mathbf{u}^2 =0, \label{contEquRnewZ}
\\
&\partial_t \mathbf{u}^{12}  + \mathbf{u}^1 \cdot  \nabx  \mathbf{u}^{12} + \mathbf{u}^{12} \cdot  \nabx  \mathbf{u}^2 + r^1\nabx   r^{12} + r^{12}\nabx   r^2= D(r^1)\divx   \mathbb{S}(\nabx   \mathbf{u}^{12}) 
\nonumber \\
&\quad+ (D(r^1)-  D(r^2))\divx   \mathbb{S}(\nabx   \mathbf{u}^2) +D(r^1)\divx   \mathbb{T}^{12} +(D(r^1)-D(r^2))\divx   \mathbb{T}^2, \label{momEquRnewZ} 
\end{align}
subject to the following initial  condition
\begin{align}
&(r^{12}, \mathbf{u}^{12})\vert_{t=0}=(r^{12}_0, \mathbf{u}^{12}_0) =(0, 0)
&\quad \text{in }  \tor.
\end{align}
Let the multi-index $\alpha$ satisfy
\begin{align}
\label{alphaSprimeSminus}
\vert \alpha \vert \leq s' \leq s-1
\end{align}
with $s>\frac{d}{2}+2$.
By applying $\partial^\alpha_\mathbf{x}$ to  \eqref{contEquRnewZ}, we obtain
\begin{equation}
\begin{aligned}
\label{contEqMultiIndexnew}
\partial_t\partial^\alpha_\bx r^{12} &+ \mathbf{u}^{12}\cdot \nabx   \partial^\alpha_\bx r^1 + \mathbf{u}^2\cdot \nabx  \partial^\alpha_\bx r^{12} +\frac{\gamma -1}{2} r^1\, \divx   \partial^\alpha_\bx \mathbf{u}^{12} +\frac{\gamma -1}{2} r^{12}\, \divx  \partial^\alpha_\bx \mathbf{u}^2 
\\&
= J_1^a + J_1^b + J_2^a + J_2^b
\end{aligned}
\end{equation}
such that
\begin{align}
\Vert J_1^a \Vert_{L^2_\bx} 
&\lesssim
\big\Vert \nabx  \mathbf{u}^{12}   \big\Vert_{L^\infty_\bx}
\big\Vert \nabx^{s'}  r^1   \big\Vert_{L^2_\bx}
+
\big\Vert \nabx   r^1  \big\Vert_{L^\infty_\bx}
\big\Vert \nabx^{s'} \mathbf{u}^{12}  \big\Vert_{L^2_\bx},
\label{j1anew}
\\
\Vert J_1^b \Vert_{L^2_\bx} 
&\lesssim
\big\Vert \nabx  \mathbf{u}^2   \big\Vert_{L^\infty_\bx}
\big\Vert \nabx^{s'}  r^{12}   \big\Vert_{L^2_\bx}
+
\big\Vert \nabx   r^{12}  \big\Vert_{L^\infty_\bx}
\big\Vert \nabx^{s'} \mathbf{u}^2  \big\Vert_{L^2_\bx},
\label{j1bnew}
\\
\Vert J_2^a \Vert_{L^2_\bx} 
&\lesssim
\big\Vert \nabx   r^1  \big\Vert_{L^\infty_\bx}
\big\Vert \nabx^{s'}  \mathbf{u}^{12}   \big\Vert_{L^2_\bx}
+
\big\Vert \divx   \mathbf{u}^{12}   \big\Vert_{L^\infty_\bx}
\big\Vert \nabx^{s'}  r^1  \big\Vert_{L^2_\bx},\label{j2anew}
\\
\Vert J_2^b \Vert_{L^2_\bx} 
&\lesssim
\big\Vert \nabx   r^{12}  \big\Vert_{L^\infty_\bx}
\big\Vert \nabx^{s'}  \mathbf{u}^2   \big\Vert_{L^2_\bx}
+
\big\Vert \divx   \mathbf{u}^2   \big\Vert_{L^\infty_\bx}
\big\Vert \nabx^{s'}  r^{12}  \big\Vert_{L^2_\bx}. \label{j2bnew}
\end{align}
On the other hand, the application of $\partial^\alpha_\mathbf{x}$ to  \eqref{momEquRnewZ} yields
\begin{equation}
\begin{aligned}
\label{momEquMultiIndexnew}
 &\partial_t \partial^\alpha_\bx \mathbf{u}^{12}  + \mathbf{u}^1 \cdot  \nabx  \partial^\alpha_\bx \mathbf{u}^{12} + \mathbf{u}^{12} \cdot  \nabx  \partial^\alpha_\bx \mathbf{u}^2 + r^1\nabx  \partial^\alpha_\bx r^{12} + r^{12}\nabx \partial^\alpha_\bx  r^2= D(r^1)\divx   \mathbb{S}(\nabx \partial^\alpha_\bx  \mathbf{u}^{12}) 
\nonumber \\
&+ (D(r^1)-  D(r^2))\divx   \mathbb{S}(\nabx   \partial^\alpha_\bx\mathbf{u}^2) +D(r^1)\divx  \partial^\alpha_\bx \mathbb{T}^{12} +(D(r^1)-D(r^2))\divx   \partial^\alpha_\bx \mathbb{T}^2
\\&+ I_1^a + I_1^b + I_2^a + I_2^b + I_3^a + I_3^b + I_4^a + I_4^b
\end{aligned}
\end{equation}
where
\begin{align}
\Vert I_1^a \Vert_{L^2_\bx} 
&\lesssim
\big\Vert \nabx  \mathbf{u}^1   \big\Vert_{L^\infty_\bx}
\big\Vert \nabx^{s'}\mathbf{u}^{12}   \big\Vert_{L^2_\bx} 
+
  \big\Vert \nabx  \mathbf{u}^{12}   \big\Vert_{L^\infty_\bx}
\big\Vert \nabx^{s'}\mathbf{u}^1   \big\Vert_{L^2_\bx},
\label{i1a}
\\
\Vert I_1^b \Vert_{L^2_\bx} 
&\lesssim
\big\Vert \nabx  \mathbf{u}^{12}   \big\Vert_{L^\infty_\bx}
\big\Vert \nabx^{s'}\mathbf{u}^2   \big\Vert_{L^2_\bx} 
+
  \big\Vert \nabx  \mathbf{u}^2   \big\Vert_{L^\infty_\bx}
\big\Vert \nabx^{s'} \mathbf{u}^{12} \big\Vert_{L^2_\bx},
\label{i1b}
\\
\Vert I_2^a \Vert_{L^2_\bx} 
&\lesssim
\big\Vert \nabx  r^1   \big\Vert_{L^\infty_\bx}
\big\Vert \nabx^{s'}  r^{12}   \big\Vert_{L^2_\bx} 
+
  \big\Vert \nabx  r^{12}   \big\Vert_{L^\infty_\bx}
\big\Vert \nabx^{s'}  r^1   \big\Vert_{L^2_\bx},
\label{i2a}
\\
\Vert I_2^b \Vert_{L^2_\bx} 
&\lesssim
\big\Vert \nabx  r^{12}   \big\Vert_{L^\infty_\bx}
\big\Vert \nabx^{s'}  r^2   \big\Vert_{L^2_\bx} 
+
  \big\Vert \nabx  r^2   \big\Vert_{L^\infty_\bx}
\big\Vert \nabx^{s'}  r^{12} \big\Vert_{L^2_\bx},
\label{i2b}
\end{align}
and
\begin{align}
\Vert I_3^a \Vert_{L^2_\bx} 
&\lesssim
\big\Vert \nabx   D(r^1 ) \big\Vert_{L^\infty_\bx}
\big\Vert  \nabx^{s'} \mathbb{S}(\nabx   \mathbf{u}^{12}  ) \big\Vert_{L^2_\bx}
+
\big\Vert \divx   \mathbb{S}(\nabx   \mathbf{u}^{12}  )  \big\Vert_{L^\infty_\bx}
\big\Vert \nabx^{s'} D(r^1 ) \big\Vert_{L^2_\bx},  
\label{i3a}\\
\Vert I_3^b \Vert_{L^2_\bx} 
&\lesssim
\big\Vert \nabx  ( D(r^1 ) -D(r^2 ))\big\Vert_{L^\infty_\bx}
\big\Vert  \nabx^{s'} \mathbb{S}(\nabx   \mathbf{u}^2  ) \big\Vert_{L^2_\bx} \nonumber
\\&+
\big\Vert \divx   \mathbb{S}(\nabx   \mathbf{u}^2  )  \big\Vert_{L^\infty_\bx}
\big\Vert \nabx^{s'} ( D(r^1 ) -D(r^2))\big\Vert_{L^2_\bx},  
\label{i3b}\\
\Vert I_4^a \Vert_{L^2_\bx} 
&\lesssim
\big\Vert \nabx   D(r^1) \big\Vert_{L^\infty_\bx}
\big\Vert  \nabx^{s'} \mathbb{T}^{12} \big\Vert_{L^2_\bx}
+
\big\Vert \divx   \mathbb{T}^{12}  \big\Vert_{L^\infty_\bx}
\big\Vert \nabx^{s'} D(r^1) \big\Vert_{L^2_\bx}, \label{i4a}
\\
\Vert I_4^b \Vert_{L^2_\bx} 
&\lesssim
\big\Vert \nabx    ( D(r^1 ) -D(r^2)) \big\Vert_{L^\infty_\bx}
\big\Vert  \nabx^{s'} \mathbb{T}^2 \big\Vert_{L^2_\bx}
+
\big\Vert \divx   \mathbb{T}^2  \big\Vert_{L^\infty_\bx}
\big\Vert \nabx^{s'}  ( D(r^1 ) -D(r^2))\big\Vert_{L^2_\bx} \label{i4b}.
\end{align}
Testing \eqref{contEqMultiIndexnew} with $\partial^\alpha_\bx r^{12}$ yields
\begin{equation}
\begin{aligned}
\label{conEqMultiIndex1new}
\frac{\mathrm{d}}{\mathrm{d}t}&\int_{\tor} \big\vert \partial^\alpha_\mathbf{x} r^{12}  \big\vert^2 \, \mathrm{d}\mathbf{x}
+ (\gamma -1)
\int_{\tor}     r^1  \,\divx   \partial^\alpha_\mathbf{x} \mathbf{u}^{12} \,\partial^\alpha_\mathbf{x} r^{12}  \, \mathrm{d}\mathbf{x}
= \int_{\tor} \big\vert \partial^\alpha_\mathbf{x} r^{12}   \big\vert^2 \divx   \mathbf{u}^2   \, \mathrm{d}\mathbf{x}
\\&
-
(\gamma -1)
\int_{\tor}   r^{12}  \,\divx   \partial^\alpha_\mathbf{x} \mathbf{u}^2  \,\partial^\alpha_\mathbf{x} r^{12}  \, \mathrm{d}\mathbf{x}
-2
\int_{\tor} \partial^\alpha_\mathbf{x} r^{12} \, \nabx \partial^\alpha_\mathbf{x} r^1   \cdot   \mathbf{u}^{12}   \, \mathrm{d}\mathbf{x}
\\&+2
\int_{\tor} ( J_1^a +J_1^b + J_2^a+ J_2^b) \cdot  \partial^\alpha_\mathbf{x} r^{12}  \, \mathrm{d}\mathbf{x}.
\end{aligned}
\end{equation}
Now note that from H\"older's inequality, \eqref{j1anew}, Young's inequality and Sobolev embedding $W^{s',2}_{\bx}\hookrightarrow W^{1,\infty}_{\bx}$, we can obtain the following estimate
\begin{equation}
\begin{aligned}
\label{j1aQt}
\int_{\Q_t}&J^a_1\cdot \partial^\alpha_\mathbf{x} r^{12} \dx \, \mathrm{d}\sigma
\leq
\int_0^t \big\Vert J^a_1\big\Vert_{L^2_\bx} \big\Vert \partial^\alpha_\mathbf{x} r^{12} \big\Vert_{L^2_\bx} \, \mathrm{d}\sigma
\lesssim
\int_0^t\Big( 
\big\Vert \nabx  \mathbf{u}^{12}   \big\Vert_{L^\infty_\bx}
\big\Vert \nabx^{s'}  r^1   \big\Vert_{L^2_\bx}
 \big\Vert \partial^\alpha_\mathbf{x} r^{12} \big\Vert_{L^2_\bx}
\\&
+
\big\Vert \nabx^{s'} \mathbf{u}^{12}  \big\Vert_{L^2_\bx}
\big\Vert \nabx   r^1  \big\Vert_{L^\infty_\bx}
 \big\Vert \partial^\alpha_\mathbf{x} r^{12} \big\Vert_{L^2_\bx}
\Big)\, \mathrm{d}\sigma
\lesssim
\int_0^t
\Vert (r^{12},\bu^{12})  \Vert_{W^{s',2}_\mathbf{x}}^2 
\Vert  r^1   \Vert_{W^{s',2}_\mathbf{x}}\,\mathrm{d}\sigma.
\end{aligned}
\end{equation}
By integrating \eqref{conEqMultiIndex1new} in time, we can treat the corresponding terms on the right side of the equation as  in \eqref{j1aQt}. Indeed, some are easier  to tackle.
Subsequently, it follows from \eqref{j1anew}--\eqref{j2bnew} and the continuous embedding $W^{s'+1,2}_\bx \hookrightarrow W^{s',2}_\bx$ that
\begin{equation}
\begin{aligned}
\label{conEqMultiIndex3new}
& \Vert  r^{12} (t) \Vert^2_{W^{s',2}_\mathbf{x}} 
+ (\gamma -1)\sum_{\vert \alpha\vert \leq s'}
\inttO    r^1  \,\divx   \partial^\alpha_\mathbf{x} \mathbf{u}^{12}  \,\partial^\alpha_\mathbf{x} r^{12}  \, \mathrm{d}\mathbf{x} \, \mathrm{d}\sigma
\\&\lesssim
\int_0^t \Vert (r^{12},\bu^{12})  \Vert_{W^{s',2}_\mathbf{x}}^2 
\Big( \Vert r^1  \Vert_{W^{s'+1,2}_\mathbf{x}}
+ \Vert \mathbf u^2 \Vert_{W^{s'+1,2}_\mathbf{x}} \Big) \, \mathrm{d}\sigma.
\end{aligned}
\end{equation} 
Note that we need $W^{s'+1,2}_\bx$-regularity of $r^1$ and $\bu^2$ in the above because of the second and third terms on the right-hand side of \eqref{conEqMultiIndex1new}. Also note that by \eqref{alphaSprimeSminus}, $s'+1\leq s$ and as such, $W^{s,2}_\bx$ is contained in $W^{s'+1,2}_\bx$.
Now if we also test \eqref{momEquMultiIndexnew} by $ \partial^\alpha_\mathbf{x} \mathbf{u}^{12}$ and integrate by parts where necessary, we gain
\begin{equation}
\begin{aligned}
\label{momEqMultiIndex2}
&\frac{\mathrm{d}}{\mathrm{d}t}\int_{\tor}   \big\vert \partial^\alpha_\mathbf{x} \mathbf{u}^{12}   \big\vert^2 \, \mathrm{d}\mathbf{x}
-
2
\int_{\tor}  r^1  \divx   \partial^\alpha_\mathbf{x} \mathbf{u}^{12}   \, \partial^\alpha_\mathbf{x} r^{12} \, \mathrm{d}\mathbf{x}
+2
\int_{\tor} D(r^1 ) \,
\mathbb{S}( \nabx   \partial^\alpha_\mathbf{x} \mathbf{u}^{12}  )  : \nabx   \partial^\alpha_\mathbf{x} \mathbf{u}^{12}   \, \mathrm{d}\mathbf{x}
\\&=
\int_{\tor}   \big\vert \partial^\alpha_\mathbf{x} \mathbf{u}^{12}   \big\vert^2 \divx   \mathbf{u}^1   \, \mathrm{d}\mathbf{x}
-2
\int_{\tor}   \bu^{12} \nabx   \partial^\alpha_\mathbf{x}\mathbf{u}^2 \cdot  \partial^\alpha_\mathbf{x} \mathbf{u}^{12}      \, \mathrm{d}\mathbf{x}
\\
&-2
\int_{\tor}   r^{12}    \nabx  \partial^\alpha_\mathbf{x} r^2 \cdot \partial^\alpha_\mathbf{x} \mathbf{u}^{12}   \, \mathrm{d}\mathbf{x}
-
2
\int_{\tor} D(r^1 )  \partial^\alpha_\mathbf{x} \mathbb{T}^{12} 
 : \nabx   \partial^\alpha_\mathbf{x} \mathbf{u}^{12}   \, \mathrm{d}\mathbf{x}
\\&-
2\int_{\tor}  \nabx   D(r^1 )\big[ \mathbb{S}( \nabx   \partial^\alpha_\mathbf{x} \mathbf{u}^{12}  ) + \partial^\alpha_\mathbf{x} \mathbb{T}^{12} \big] \cdot  \partial^\alpha_\mathbf{x} \mathbf{u}^{12}   \, \mathrm{d}\mathbf{x}
+2
\int_{\tor}  \nabx   r^1  \cdot \partial^\alpha_\mathbf{x} r^{12}  \partial^\alpha_\mathbf{x} \mathbf{u}^{12}   \, \mathrm{d}\mathbf{x}
\\&
-2
\int_{\tor} \big( D(r^1 ) -D(r^2) \big) \divx  
\mathbb{S}( \nabx   \partial^\alpha_\mathbf{x} \mathbf{u}^2  ) \cdot \partial^\alpha_\mathbf{x} \mathbf{u}^{12}   \, \mathrm{d}\mathbf{x}
\\&-2
\int_{\tor} \big( D(r^1 ) -D(r^2) \big) \divx\partial^\alpha_\mathbf{x} \mathbb{T}^2 
 \cdot \partial^\alpha_\mathbf{x} \mathbf{u}^{12}   \, \mathrm{d}\mathbf{x}
\\
&
+
2 \int_{\tor} (I_1^a + \ldots + I_4^b) \cdot  \partial^\alpha_\mathbf{x} \mathbf{u}^{12}   \, \mathrm{d}\mathbf{x}.
\end{aligned}
\end{equation}
We can now integrate \eqref{momEqMultiIndex2} in time and estimate the right-hand terms. First of all, we can use Sobolev's inequality to obtain,
\begin{align*}
\bigg\vert \int_{\Q_t}   \big\vert \partial^\alpha_\mathbf{x} \mathbf{u}^{12}   \big\vert^2 \divx   \mathbf{u}^1 \, \dx  \, \mathrm{d}\sigma \bigg\vert  
\lesssim
\int_0^t \Vert  \mathbf{u}^{12}  \Vert_{W^{s',2}_\mathbf{x}}^2\Vert \bu^1 \Vert_{W^{s',2}_\mathbf{x}}  \, \mathrm{d}\sigma
\end{align*}
whereas in combination with H\"older's inequality, 
\begin{equation}
\begin{aligned}
\label{estDifferently}
\bigg\vert
2
\int_{\Q_t}   \bu^{12} \nabx   \partial^\alpha_\mathbf{x}\mathbf{u}^2 \cdot  \partial^\alpha_\mathbf{x} \mathbf{u}^{12}      \, \mathrm{d}\mathbf{x} \, \mathrm{d}\sigma
\bigg\vert
\lesssim
\int_0^t \Vert  \mathbf{u}^{12}  \Vert_{W^{s',2}_\mathbf{x}}^2\Vert \bu^2 \Vert_{W^{s'+1,2}_\mathbf{x}}  \, \mathrm{d}\sigma.
\end{aligned}
\end{equation}
Similarly, by Young's inequality
\begin{equation}
\begin{aligned}
\bigg\vert
2
\int_{\Q_t}   r^{12}    \nabx  \partial^\alpha_\mathbf{x} r^2 \cdot \partial^\alpha_\mathbf{x} \mathbf{u}^{12}   \, \mathrm{d}\mathbf{x}  \, \mathrm{d}\sigma
\bigg\vert
\lesssim
\int_0^t \Vert  r^{12}  \Vert_{W^{s',2}_\mathbf{x}}^2\Vert r^2 \Vert_{W^{s'+1,2}_\mathbf{x}}  \, \mathrm{d}\sigma
+
\int_0^t \Vert  \mathbf{u}^{12}  \Vert_{W^{s',2}_\mathbf{x}}^2\Vert r^2 \Vert_{W^{s'+1,2}_\mathbf{x}}  \, \mathrm{d}\sigma.
\end{aligned}
\end{equation}
Using \eqref{maxPrincipleR}, Sobolev's embedding and \eqref{lipschitzDRR} yields
\begin{equation}
\begin{aligned}
\bigg\vert
2&
\int_{\Q_t} D(r^1 )  \partial^\alpha_\mathbf{x} \mathbb{T}^{12} 
 : \nabx   \partial^\alpha_\mathbf{x} \mathbf{u}^{12}   \, \mathrm{d}\mathbf{x}  \, \mathrm{d}\sigma
\bigg\vert
\leq 
\delta
\int_{0}^t\Vert  \bu^{12}  \Vert_{W^{s'+1,2}_\mathbf{x}}^2   \, \mathrm{d}\sigma+
c_\delta\int_0^t \Vert  \mathbb T^{12}  \Vert_{W^{s',2}_\mathbf{x}}^2 \, \mathrm{d}\sigma
\end{aligned}
\end{equation}
for any $\delta>0$.
Similarly, we obtain
\begin{equation}
\begin{aligned}
\bigg\vert
2\int_{\Q_t}  \nabx   D(r^1 ) \mathbb{S}( \nabx   \partial^\alpha_\mathbf{x} \mathbf{u}^{12}  )  &\cdot  \partial^\alpha_\mathbf{x} \mathbf{u}^{12}   \, \mathrm{d}\mathbf{x}  \, \mathrm{d}\sigma
\bigg\vert
\leq 
\delta
\int_{0}^t\Vert  \bu^{12}  \Vert_{W^{s'+1,2}_\mathbf{x}}^2   \, \mathrm{d}\sigma+
c_\delta\int_0^t \Vert  \bu^{12}  \Vert_{W^{s',2}_\mathbf{x}}^2 \, \mathrm{d}\sigma
\end{aligned}
\end{equation}
for any $\delta>0$. Also, \eqref{maxPrinciple2R} yields
\begin{equation}
\begin{aligned}
\bigg\vert
2&\int_{\Q_t}  \nabx   D(r^1 ) \,   \partial^\alpha_\mathbf{x} \mathbb{T}^{12}   \cdot  \partial^\alpha_\mathbf{x} \mathbf{u}^{12}   \, \mathrm{d}\mathbf{x}  \, \mathrm{d}\sigma
\bigg\vert
\lesssim
 \int_0^t
 \Vert  \mathbf{u}^{12}\Vert_{W^{s',2}_\mathbf{x}}^2   \, \mathrm{d}\sigma+
\int_0^t \Vert  \mathbb T^{12}  \Vert_{W^{s',2}_\mathbf{x}}^2\, \mathrm{d}\sigma
\end{aligned}
\end{equation}
and 
\begin{align}
\bigg\vert  2
\int_{\Q_t}  \nabx   r^1  \cdot \partial^\alpha_\mathbf{x} r^{12}  \partial^\alpha_\mathbf{x} \mathbf{u}^{12}   \, \mathrm{d}\mathbf{x}\, \mathrm{d}\sigma
\bigg\vert
\lesssim
\int_0^t \Vert  r^{12}  \Vert_{W^{s',2}_\mathbf{x}}^2  \, \mathrm{d}\sigma
+
\int_0^t \Vert  \mathbf{u}^{12}  \Vert_{W^{s',2}_\mathbf{x}}^2 \, \mathrm{d}\sigma.
\end{align}
As consequence of \eqref{lipschitzDRR} we obtain
\begin{equation}
\begin{aligned}
\bigg\vert
2
&\int_{\Q_t} \big( D(r^1 ) -D(r^2) \big) \divx  
\mathbb{S}( \nabx   \partial^\alpha_\mathbf{x} \mathbf{u}^2  )  \cdot \partial^\alpha_\mathbf{x} \mathbf{u}^{12}   \, \mathrm{d}\mathbf{x}\, \mathrm{d} \sigma\bigg\vert\\
&\lesssim
\int_0^t \Vert  \mathbf{u}^{12}  \Vert_{W^{s',2}_\mathbf{x}}\Vert  r^{12}  \Vert_{W^{s',2}_\mathbf{x}}\Vert \bu^2 \Vert_{W^{s'+2,2}_\mathbf{x}}  \, \mathrm{d}\sigma.
\end{aligned}
\end{equation}
In order to estimate the term involving $\mathbb T^2$ we integrate by parts to obtain
\begin{align*}
-2
\int_{\Q_t} \big( D(r^1 ) -D(r^2) \big) \divx\partial^\alpha_\mathbf{x} \mathbb{T}^2 
 \cdot \partial^\alpha_\mathbf{x} \mathbf{u}^{12}   \, \mathrm{d}\mathbf{x}\, \mathrm{d}\sigma
 &=
 2
\int_{\Q_t} \nabx\big( D(r^1 ) -D(r^2) \big)\cdot \partial^\alpha_\mathbf{x} \mathbb{T}^2 
 \partial^\alpha_\mathbf{x} \mathbf{u}^{12}   \, \mathrm{d}\mathbf{x}\, \mathrm{d}\sigma\\
 &+2
\int_{\Q_t} \big( D(r^1 ) -D(r^2) \big)\partial^\alpha_\mathbf{x} \mathbb{T}^2 
 : \partial^\alpha_\mathbf{x} \nabx\mathbf{u}^{12}   \, \mathrm{d}\mathbf{x}\, \mathrm{d}\sigma\\
 &=2
\int_{\Q_t} D'(r^1 )\big(\nabx r^1-\nabx r^2 \big)\cdot \partial^\alpha_\mathbf{x} \mathbb{T}^2 
 \partial^\alpha_\mathbf{x} \mathbf{u}^{12}   \, \mathrm{d}\mathbf{x}\, \mathrm{d}\sigma\\
  &+2
\int_{\Q_t} \big( D'(r^1 ) -D'(r^2) \big)\nabx r^2\cdot  \partial^\alpha_\mathbf{x} \mathbb{T}^2 
 \partial^\alpha_\mathbf{x} \mathbf{u}^{12}   \, \mathrm{d}\mathbf{x}\, \mathrm{d}\sigma\\
 &+2
\int_{\Q_t} \big( D(r^1 ) -D(r^2) \big)\partial^\alpha_\mathbf{x} \mathbb{T}^2 
 : \partial^\alpha_\mathbf{x} \nabx\mathbf{u}^{12}   \, \mathrm{d}\mathbf{x}\, \mathrm{d}\sigma
 \end{align*}
 Using \eqref{maxPrincipleR}, \eqref{maxPrinciple2R} and Sobolev's embedding we can estimate these terms as follows
 \begin{align*}
 \bigg\vert 2
\int_{\Q_t} D'(r^1 )\big(\nabx r^1-\nabx r^2 \big) \partial^\alpha_\mathbf{x} \mathbb{T}^2 
 \cdot \partial^\alpha_\mathbf{x} \mathbf{u}^{12}   \, \mathrm{d}\mathbf{x}\, \mathrm{d}\sigma \bigg\vert
 &\lesssim
 \int_0^t \Vert  \mathbf{u}^{12}  \Vert_{W^{s',2}_\mathbf{x}}\Vert  r^{12}  \Vert_{W^{s',2}_\mathbf{x}}\Vert \mathbb T^2 \Vert_{W^{s',2}_\mathbf{x}}  \, \mathrm{d}\sigma,\\
 \bigg\vert 2
\int_{\Q_t} \big( D'(r^1 ) -D'(r^2) \big)\nabx r^2 \partial^\alpha_\mathbf{x} \mathbb{T}^2 
 \partial^\alpha_\mathbf{x} \mathbf{u}^{12}   \, \mathrm{d}\mathbf{x}\, \mathrm{d}\sigma \bigg\vert
 &\lesssim
 \int_0^t \Vert  \mathbf{u}^{12}  \Vert_{W^{s',2}_\mathbf{x}}\Vert  r^{12}  \Vert_{W^{s',2}_\mathbf{x}}\Vert \mathbb T^2 \Vert_{W^{s',2}_\mathbf{x}}  \, \mathrm{d}\sigma,\\
 \bigg\vert2
\int_{\Q_t} \big( D(r^1 ) -D(r^2) \big)\partial^\alpha_\mathbf{x} \mathbb{T}^2 
 : \partial^\alpha_\mathbf{x} \nabx\mathbf{u}^{12}   \, \mathrm{d}\mathbf{x} \, \mathrm{d}\sigma \big\vert
 &\leq\,\delta\int_{0}^t\Vert  \bu^{12}  \Vert_{W^{s'+1,2}_\mathbf{x}}^2   \, \mathrm{d}\sigma+ c_\delta\int_0^t\Vert  r^{12}  \Vert_{W^{s',2}_\mathbf{x}}^2\Vert \mathbb T^2 \Vert_{W^{s',2}_\mathbf{x}}^2  \, \mathrm{d}\sigma.
 \end{align*}
As a result of \eqref{i1a}--\eqref{i2b} and the continuous embedding $W^{s'+1,2}_\bx\hookrightarrow W^{1,\infty}_\bx$, we also have
\begin{align*}
\bigg\vert
2 \int_{\Q_t} &(I_1^a + \ldots + I_2^b) \cdot  \partial^\alpha_\mathbf{x} \mathbf{u}^{12}   \, \mathrm{d}\mathbf{x}\, \mathrm{d}\sigma
\bigg\vert
\lesssim
\int_0^t
\Vert (r^{12} , \mathbf{u}^{12}  ) \Vert^2_{W^{s',2}_\mathbf{x}}   \Big( 
\Vert (r^1,r^2)  \Vert_{W^{s'+1,2}_\mathbf{x}}
+ \Vert (\mathbf{u}^1,\mathbf{u}^2)   \Vert_{W^{s'+1,2}_\mathbf{x}}\Big)
\, \mathrm{d}\sigma.
\end{align*}
Furthermore, due to \eqref{lipschitzDRR}
\begin{equation}
\begin{aligned}
\bigg\vert
2 \int_{\Q_t} I_3^a &\cdot  \partial^\alpha_\mathbf{x} \mathbf{u}^{12}   \, \mathrm{d}\mathbf{x}\, \mathrm{d}\sigma
\bigg\vert
\leq
\delta\int_0^t \big\Vert\mathbf{u}^{12}  \big\Vert_{W^{s'+1,2}_\bx}^2 \, \mathrm{d}\sigma+ c
\int_0^t
\Vert \mathbf{u}^{12}  \Vert^2_{W^{s',2}_\mathbf{x}}  
\Vert r^1  \Vert^2_{W^{s',2}_\mathbf{x}}
\, \mathrm{d}\sigma
\end{aligned}
\end{equation}
as well as
\begin{align}
\bigg\vert
2 \int_{\Q_t} I_3^b \cdot  \partial^\alpha_\mathbf{x} \mathbf{u}^{12}   \, \mathrm{d}\mathbf{x}\, \mathrm{d}\sigma\bigg\vert
\lesssim
\int_0^t \Vert  \mathbf{u}^{12}  \Vert_{W^{s',2}_\mathbf{x}}\Vert  r^{12}  \Vert_{W^{s',2}_\mathbf{x}}\Vert \bu^2 \Vert_{W^{s'+1,2}_\mathbf{x}}  \, \mathrm{d}\sigma
\end{align}
because of $W^{s'+1,2}_\bx\hookrightarrow W^{2,\infty}_\bx$.
Finally,
\begin{equation}
\begin{aligned}
\bigg\vert
2 \int_{\Q_t}  I_4^a \cdot  \partial^\alpha_\mathbf{x} \mathbf{u}^{12}   \, \mathrm{d}\mathbf{x}\, \mathrm{d}\sigma
\bigg\vert
&\lesssim
\int_0^t\Vert   \mathbb T^{12}\Vert_{W^{s',2}_\mathbf{x}}^2 \Vert r^1 \Vert_{W^{s',2}_\mathbf{x}}
\, \mathrm{d}\sigma
+
\int_0^t \Vert  \mathbf{u}^{12}  \Vert_{W^{s',2}_\mathbf{x}}^2 
\Vert r^1 \Vert_{W^{s',2}_\mathbf{x}}\, \mathrm{d}\sigma,
\\
\bigg\vert
2 \int_{\Q_t}  I_4^b \cdot  \partial^\alpha_\mathbf{x} \mathbf{u}^{12}   \, \mathrm{d}\mathbf{x}\, \mathrm{d}\sigma
\bigg\vert
&
\lesssim
\int_0^t
\Vert r^{12}  \Vert_{W^{s',2}_\mathbf{x}}\Vert  \mathbf{u}^{12} \Vert_{W^{s',2}_\mathbf{x}}
\Vert   \mathbb T^2\Vert_{W^{s',2}_\mathbf{x}}
\, \mathrm{d}\sigma.
\end{aligned}
\end{equation}
In conclusion
\begin{equation}
\begin{aligned}
\label{momEqMultiIndex4new}
&  \Vert \mathbf{u}^{12}  (t) \Vert^2_{W^{s',2}_\mathbf{x}}
-
2\sum_{\vert \alpha\vert \leq s'}
\inttO  r^1  \, \divx   \partial^\alpha_\mathbf{x} \mathbf{u}^{12}   \, \partial^\alpha_\mathbf{x} r^{12} \, \mathrm{d}\mathbf{x}\, \mathrm{d}\sigma
+
\int_0^t \Vert \mathbf{u}^{12}  (\sigma) \Vert^2_{W^{s'+1,2}_\mathbf{x}} \mathrm{d}\sigma
\\&
\lesssim
\int_0^t
\Vert (r^{12} , \mathbf{u}^{12}  ) \Vert^2_{W^{s',2}_\mathbf{x}}   \Big( 
\Vert (r^1,r^2)  \Vert^2_{W^{s'+1,2}_\mathbf{x}}
+ \Vert (\mathbf{u}^1,\mathbf{u}^2)   \Vert_{W^{s'+2,2}_\mathbf{x}}+
\Vert   \mathbb T^2\Vert_{W^{s',2}_\mathbf{x}}^2+1\Big)
\, \mathrm{d}\sigma
\\&+
\int_0^t\Vert   \mathbb T^{12}\Vert_{W^{s',2}_\mathbf{x}}^2 \Vert r^1 \Vert_{W^{s'+1,2}_\mathbf{x}}
\, \mathrm{d}\sigma.
\end{aligned}
\end{equation}
Summing \eqref{conEqMultiIndex3new} with the product of $\frac{\gamma-1}{2}$ and \eqref{momEqMultiIndex4new} then yield
\begin{equation}
\begin{aligned}
\label{momEqMultiIndex4anew}
&  \Vert ( r^{12}(t), \mathbf{u}^{12}(t))   \Vert^2_{W^{s',2}_\mathbf{x}}
+
\int_0^t \Vert \mathbf{u}^{12}  (\sigma) \Vert^2_{W^{s'+1,2}_\mathbf{x}}\, \mathrm{d}\sigma
\\&
\lesssim
\int_0^t
\Vert (r^{12} , \mathbf{u}^{12}  ) \Vert^2_{W^{s',2}_\mathbf{x}}   \Big( 
\Vert (r^1,r^2)  \Vert^2_{W^{s'+1,2}_\mathbf{x}}
+ \Vert (\mathbf{u}^1,\mathbf{u}^2)   \Vert_{W^{s'+2,2}_\mathbf{x}}+
\Vert   \mathbb T^2\Vert_{W^{s',2}_\mathbf{x}}^2+1\Big)
\, \mathrm{d}\sigma
\\&+
\int_0^t\Vert   \mathbb T^{12}\Vert_{W^{s',2}_\mathbf{x}}^2 \Vert r^1 \Vert_{W^{s'+1,2}_\mathbf{x}}
\, \mathrm{d}\sigma.
\end{aligned}
\end{equation}
Applying Gronwell's lemma finishes the proof of Theorem \ref{thm:main'} with a constant depending on $K_{\mathfrak{R}}$. This implies uniqueness. As in Section \ref{sec:exisNS} we can now decrease the time $T$ (depending on $\sup_{0<t<T_0}\|\bu^1\|_{s,2}$ and $\sup_{0<t<T_0}\|\bu^2\|_{s,2}$, recall the definition of $\mathfrak R$ in \eqref{RR}) such that the constant is independent of $\mathfrak R$ which completes the proof of Theorem \ref{thm:main'}.
\begin{remark}
We remind the reader that the condition \eqref{alphaSprimeSminus} means that in particular, $s'+2\leq s+1$ which is the threshold differentiability exponent of the solutions $\bu^i$, $i=1,2$ to the momentum equation \eqref{momEquR}. 
\end{remark} 

\section{Solving the Fokker--Planck equation}
\label{sec:FP}
\noindent
The aim of this section is to solve the Fokker--Planck equation
\begin{align}\label{eq:FP}
\partial_t \psi + \divx  (\mathbf{u} \psi) 
=
\varepsilon \Delx \psi
-
 \divq  \big( (\nabx   \mathbf{u}) \mathbf{q}\psi \big) 
 +
\frac{A_{11}}{4\lambda}  \divq  \bigg( M \nabq  \bigg(\frac{\psi}{M} \bigg)
\bigg)
\end{align} 
with $\varepsilon\geq0$ and $A_{11}, \lambda>0$. Here, $\bu$ is a given smooth function and we recall that the Maxwellian is given by
\begin{align*}
M(\mathbf{q}) = \frac{e^{-U \left(\frac{1}{2}\vert \mathbf{q} \vert^2 \right) }}{\int_Be^{-U \left(\frac{1}{2}\vert \mathbf{q} \vert^2 \right) }\,\mathrm{d}\mathbf{q}},\quad  U (s) = -\frac{b}{2} \log \bigg(1-  \frac{2s}{b} \bigg), \quad s\in [0,b/2)
\end{align*}
with $b>2$.
Let us start with a precise definition of the solution.
\begin{definition}
\label{def:strsolmartFP}
Let $s\in\mathbb N$ and $T>0$. Assume that
$\psi_0\in W^{s,2}\big( \tor; L^2_M(B) \big)$
and $\bfu\in C([0,T]; W^{s,2}(\tor ;\mathbb{R}^d )) \cap L^2(0,T; W^{s+1,2}(\tor ;\mathbb{R}^d ))$.
We call
$\psi$
a solution to the system \eqref{eq:FP} with initial condition $\psi_0$ in the interval $[0,T]$ provided the following holds.
\begin{itemize}
\item[(a)] $\psi$ satisfies
\begin{align*}
C \big( [0, T]; W^{s,2} \big( \tor; L^2_M(B) \big) \big)  
\cap L^2 \big( 0,T; W^{s,2}\big(\tor; H^1_M(B) \big)\big);
\end{align*}
\item[(b)] there holds for all $t\in[0,T]$  and for any $\phi(\bq) \in C^1(B)$,
\[
\begin{split}
\int_B\psi (t)\phi\dq  &= \int_B\psi_0\phi \dq - \int_0^t \int_B\Big[ \divx  (\mathbf{u} \psi) 
-
\varepsilon \,\Delx \psi
 \Big] \phi\dq\dif\sigma
+
\int_0^t \int_B
 (\nabx   \mathbf{u}) \mathbf{q}\psi \cdot \nabq \phi \, \dq\dif\sigma
\\&
-
\frac{A_{11}}{4\lambda}  \int_0^t\int_B
M \nabq  \bigg(\frac{\psi}{M} \bigg)\cdot \nabq \phi \,\dq\dif\sigma.
\end{split}
\]
\end{itemize}
\end{definition}
As in the case of Definition \ref{def:strsolmartRho}, differentiability in time as well as the correct initial datum follows from the equation in (b). 
We now formulate our results concerning well-posedness and uniqueness for \eqref{eq:FP}. 
\begin{theorem}\label{thm:mainFP}
Let $s\in\mathbb N$ satisfy $s>\frac{d}{2} + 2$, $T>0$ and suppose that $\varepsilon\geq0$. Assume that
$\psi_0\in W^{s,2}\big( \tor; L^2_M(B) \big)$
and $\bfu \in C([0,T_0]; W^{s,2}(\tor ;\mathbb{R}^d))\cap L^2(0,T_0; W^{s+1,2}(\tor ;\mathbb{R}^d))$. Then there is a solution $\psi$ to problem \eqref{eq:FP}
in the sense of Definition \ref{def:strsolmartFP} defined  on the interval $[0,T]$ and with the initial condition $\psi_0$. Moreover, we have the estimate
\begin{align}\label{eq:thm:mainFP}
\begin{aligned}
\sup_{0<t<T}\Vert\psi\Vert^2_{W^{s,2}_\mathbf{x}L^2_M}&+2\varepsilon\int_0^T\Vert\psi\Vert^2_{W^{s+1,2}_\mathbf{x}L^2_M}\,\mathrm{d}t
+
\bigg(\frac{A_{11}}{2\lambda} -4\delta \bigg) \int_0^T\Vert\psi\Vert^2_{W^{s,2}_\mathbf{x}H^1_M}\,\mathrm{d}t
\\
&\leq  c\exp\bigg(c\int_0^T\Vert   \bu  \Vert^2_{W^{s+1,2}_\bx}\,\mathrm{d}t \bigg) \Vert \psi_0\Vert^2_{W^{s,2}_\mathbf{x}L^2_M}.
\end{aligned}
\end{align}
with a constant depending only on $s,b,d,\delta$ and $T$.
\end{theorem}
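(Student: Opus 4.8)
The plan is to construct $\psi$ as a limit of Galerkin approximations in the conformation variable $\bq$, carrying the a priori bound \eqref{eq:thm:mainFP} along at the approximate level; that uniform bound is what forces compactness and simultaneously yields the stated estimate. Concretely, I take $\{\eta_i\}_{i\in\mathbb N}\subset H^1_M(B)$ to be the $L^2_M(B)$-orthonormal basis of eigenfunctions of $\eta\mapsto-\tfrac1M\divq(M\nabq\eta)$ subject to the no-flux condition $M\nabq\eta\cdot\tfrac{\bq}{|\bq|}=0$ on $\partial B$ (available since $H^1_M(B)\hookrightarrow\hookrightarrow L^2_M(B)$), with eigenvalues $\mu_i\ge0$, so that $\int_B M\eta_i\eta_j\dq=\delta_{ij}$ and $\int_B M\nabq\eta_i\cdot\nabq\eta_j\dq=\mu_i\delta_{ij}$. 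I look for $\psi^N=M\sum_{i\le N}c^N_i(t,\bx)\,\eta_i(\bq)$ solving the Galerkin projection of \eqref{eq:FP} obtained by testing against $\eta_j$, $j\le N$. By the orthogonality relations this reduces to a \emph{linear} system of $N$ equations in $(t,\bx)$ for $(c^N_1,\dots,c^N_N)$ of the schematic form
\begin{align*}
\partial_t c^N_j+\divx(\bu\,c^N_j)-\varepsilon\Delx c^N_j+\tfrac{A_{11}}{4\lambda}\mu_j\,c^N_j=\sum_{i\le N}\big(\nabx\bu(\bx)\big):\mathbb B_{ij}\,c^N_i
\end{align*}
with constant tensors $\mathbb B_{ij}$; since $\bu$ is smooth this is uniquely solvable in $C([0,T];W^{s,2}(\tor))^N$ by standard linear theory --- parabolic if $\varepsilon>0$, a linear first-order system if $\varepsilon=0$, in which case a vanishing artificial viscosity $\kappa\Delx$ may be added and removed at the end by the $\kappa$-uniform bound below. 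By construction $\psi^N\in C([0,T];W^{s,2}_\bx L^2_M)\cap L^2(0,T;W^{s,2}_\bx H^1_M)$ and $\psi^N$ satisfies the weak formulation (b) for $\phi\in\mathrm{span}\{\eta_j:j\le N\}$.

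\textbf{The a priori estimate.} For $|\alpha|\le s$, applying $\partial^\alpha_\bx$ to the Galerkin system, testing the $j$-th equation with $\partial^\alpha_\bx c^N_j$ and summing over $j\le N$ is equivalent to applying $\partial^\alpha_\bx$ to \eqref{eq:FP} for $\psi^N$ and testing with $\partial^\alpha_\bx(\psi^N/M)$ over $\tor\times B$ (note that $\partial^\alpha_\bx$ commutes with $\divq,\nabq,\Delx$ and with multiplication by $M=M(\bq)$). The time term yields $\tfrac12\tfrac{\dd}{\dd t}\int_\tor\int_B M|\partial^\alpha_\bx(\psi^N/M)|^2\dq\dx$. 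The Fokker--Planck term and the boundary contribution of the drag term combine, after integrating by parts in $\bq$, into $\int_\tor\int_{\partial B}\big[\tfrac{A_{11}}{4\lambda}M\nabq\partial^\alpha_\bx(\psi^N/M)-\partial^\alpha_\bx\big((\nabx\bu)\bq\psi^N\big)\big]\cdot\tfrac{\bq}{|\bq|}\,\partial^\alpha_\bx(\psi^N/M)\,\mathrm{d}S\dx$, which vanishes because the bracket is the $\partial^\alpha_\bx$-derivative of the boundary condition \eqref{fokkerPlankBoundary}; what remains from the Fokker--Planck term is the dissipation $-\tfrac{A_{11}}{4\lambda}\int_\tor\|\partial^\alpha_\bx\psi^N\|^2_{H^1_M(B)}\dx$. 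The centre-of-mass term gives the non-negative dissipation $\varepsilon\int_\tor\|\nabx\partial^\alpha_\bx\psi^N\|^2_{L^2_M(B)}\dx$ (using that $M$ is $\bx$-independent; summed over $|\alpha|\le s$ this controls $2\varepsilon\|\psi^N\|^2_{W^{s+1,2}_\bx L^2_M}$ up to a harmless $\varepsilon\|\psi^N\|^2_{L^2_\bx L^2_M}$). For the convection term I split $\partial^\alpha_\bx\divx(\bu\psi^N)=\divx(\bu\,\partial^\alpha_\bx\psi^N)+(\text{commutator})$; using $\nabx\partial^\alpha_\bx\psi^N=M\nabx\partial^\alpha_\bx(\psi^N/M)$ and integrating by parts in $\bx$, the main part is bounded by $\|\divx\bu\|_{L^\infty_\bx}\|\partial^\alpha_\bx\psi^N\|^2_{L^2_\bx L^2_M}$, while the commutator is treated fibrewise in $\bq$ via the Moser estimate \eqref{commutatorEstimate}, Minkowski's integral inequality and $W^{s,2}_\bx\hookrightarrow W^{1,\infty}_\bx$, altogether $\lesssim\|\bu\|_{W^{s,2}_\bx}\|\psi^N\|^2_{W^{s,2}_\bx L^2_M}$. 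For the drag term I move $\divq$ onto $\partial^\alpha_\bx(\psi^N/M)$ and expand $\partial^\alpha_\bx\big((\nabx\bu)\bq\psi^N\big)$ by Leibniz: the top-order piece $\int(\nabx\partial^\alpha_\bx\bu)\bq\,M(\psi^N/M)\cdot\nabq\partial^\alpha_\bx(\psi^N/M)$, using $|\bq|\le\sqrt b$, Cauchy--Schwarz in $\bq$, H\"older in $\bx$ and $\|\psi^N\|_{L^\infty_\bx L^2_M}\lesssim\|\psi^N\|_{W^{s,2}_\bx L^2_M}$, is bounded by $\delta\|\partial^\alpha_\bx\psi^N\|^2_{L^2_\bx H^1_M}+c_\delta\|\bu\|^2_{W^{s+1,2}_\bx}\|\psi^N\|^2_{W^{s,2}_\bx L^2_M}$ --- this is the place where the $W^{s+1,2}_\bx$-norm of $\bu$ is needed --- and the strictly lower-order pieces are controlled as in the convection commutator (Lemma \ref{lem:A1} is at hand if one chooses to bound a term of type $\int_B|\partial^\alpha_\bx\psi^N|\,(1-|\bq|)^{-1}\dq$).

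\textbf{Conclusion and passage to the limit.} Summing over $|\alpha|\le s$, choosing $\delta$ with $\tfrac{A_{11}}{2\lambda}-4\delta>0$ and absorbing the $\delta$-terms into the $H^1_M$-dissipation produces a differential inequality of the form
\begin{align*}
\tfrac{\dd}{\dd t}\|\psi^N\|^2_{W^{s,2}_\bx L^2_M}+2\varepsilon\|\psi^N\|^2_{W^{s+1,2}_\bx L^2_M}+\Big(\tfrac{A_{11}}{2\lambda}-4\delta\Big)\|\psi^N\|^2_{W^{s,2}_\bx H^1_M}\lesssim\big(1+\|\bu\|^2_{W^{s+1,2}_\bx}\big)\|\psi^N\|^2_{W^{s,2}_\bx L^2_M},
\end{align*}
and Gronwall's lemma (absorbing $\int_0^T 1\,\dd t=T$ into the constant) gives \eqref{eq:thm:mainFP} for $\psi^N$, uniformly in $N$ (and in $\kappa$, if used). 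These bounds, together with the equation, control $\partial_t\psi^N$ in a space dual to an $\bx$-Sobolev space of $L^2_M$-valued maps, so an Aubin--Lions argument (using the compactness $H^1_M(B)\hookrightarrow\hookrightarrow L^2_M(B)$) extracts a subsequence converging to some $\psi$ in the spaces of Definition \ref{def:strsolmartFP}; linearity of \eqref{eq:FP} allows passage to the limit in the weak formulation, first for $\phi\in\mathrm{span}\{\eta_j\}$ and then, by density, for all $\phi\in C^1(B)$, and weak lower semicontinuity of the norms transfers \eqref{eq:thm:mainFP} to $\psi$; continuity $\psi\in C([0,T];W^{s,2}_\bx L^2_M)$ follows in the usual way from the uniform bounds and the equation.

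\textbf{Expected main obstacle.} The delicate point is the behaviour at $\partial B$: since $M$ degenerates there and the spring force is singular, the boundary fluxes of the Fokker--Planck and drag terms need not be separately integrable, so it is essential that they occur only in the combination dictated by \eqref{fokkerPlankBoundary} and its $\bx$-derivatives, where they cancel. Making this rigorous forces the approximation to respect the no-flux condition exactly, which is why the $\bq$-Galerkin basis must be taken to consist of eigenfunctions of the weighted operator rather than, say, polynomials. A secondary technical nuisance is that the Moser-type inequalities of Section \ref{sec:prelim} are stated for scalar functions on $\tor$, so each application must be carried out for a.e.\ fixed $\bq$ and then integrated against $M\,\dq$, which is legitimate by Minkowski's integral inequality but must be done with care about the order of the $\bx$- and $\bq$-norms.
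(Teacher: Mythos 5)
Your proof is correct, and its analytic core coincides with the paper's: the same $\bq$-eigenbasis of the Maxwellian-weighted operator, the same strategy of applying $\partial^\alpha_\bx$, testing with $\partial^\alpha_\bx(\psi/M)$, Moser-type commutator bounds as in \eqref{commutatorEstimate}, the crucial Young absorption of the top-order drag term $\int M(\nabx\partial^\alpha_\bx\bu)\bq\,(\psi/M)\cdot\nabq\partial^\alpha_\bx(\psi/M)$ into the $H^1_M$-dissipation (which is exactly where $\Vert\bu\Vert_{W^{s+1,2}_\bx}$ enters), and Gronwall. The only genuine difference is the construction of approximations: the paper uses a double Galerkin scheme, projecting in $\bx$ (trigonometric basis, $\Pi^n_\bx$) as well as in $\bq$ ($\Pi^n_\bq$) and inserting a cut-off $\chi_n(\bq)$ near $\partial B$, which reduces \eqref{eq:FPn} to a finite ODE system and makes every manipulation in the estimates elementary at the approximate level; you project only in $\bq$, so the coefficients $c^N_j$ solve a linear coupled PDE system in $(t,\bx)$, and you must invoke linear parabolic theory (or, for $\varepsilon=0$, transport/symmetric-hyperbolic theory or a vanishing artificial viscosity $\kappa\Delta_\bx$ removed via the $\kappa$-uniform bound) to get $c^N_j\in C([0,T];W^{s,2}_\bx)$. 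What your route buys is that the cut-off $\chi_n$ and the $\bx$-projection become unnecessary, since at the approximate level the drag term only enters through the finite matrices $\mathbb B_{ij}=\int_B M\eta_i\,\bq\otimes\nabq\eta_j\dq$ and the weak-in-$\bq$ formulation builds in the no-flux condition, so no singular boundary fluxes ever appear; the price is that the solvability of the coefficient system (with coefficients only in $C_tW^{s,2}_\bx\cap L^2_tW^{s+1,2}_\bx$, and the $\varepsilon=0$ degenerate case) has to be justified by an external well-posedness result rather than by the Picard argument for ODEs the paper relies on. Both routes deliver the same uniform bound \eqref{eq:thm:mainFP} and the same limit passage via weak compactness and lower semicontinuity.
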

The proof of Theorem \ref{thm:mainFP} can be found in Section \ref{sec:exisFokkerPlanck}.
\begin{theorem}\label{thm:main'FP}
Let the assumptions of Theorem \ref{thm:mainFP} be satisfied. Then the solution from Theorem \ref{thm:mainFP} is unique. Moreover, for $s'\in\mathbb N$ with $s' \leq s-1$ we have the estimate
\begin{equation}
\begin{aligned}
\label{est:thm:main'}
\sup_{0<t<T}&\Vert\psi^1-\psi^2\Vert^2_{W^{s',2}_\mathbf{x}L^2_M}
+
2\varepsilon\int_0^T\Vert\psi^1-\psi^2\Vert^2_{W^{s'+1,2}_\mathbf{x}L^2_M}
\mathrm{d}t
+
\bigg(\frac{A_{11}}{2\lambda} -8\delta \bigg)
\int_0^T \Vert\psi^1-\psi^2\Vert^2_{W^{s',2}_\mathbf{x}H^1_M} \mathrm{d}t
\\&
\leq\,c
\exp\bigg(c\int_0^T \Vert (\mathbf{u}^1,\mathbf{u}^2)   \Vert_{W^{s'+1,2}_\mathbf{x}}^2
\, \mathrm{d}t \bigg)\int_0^t\Vert \bu^{1}-\bu^2  \Vert^2_{W^{s'+1,2}_\bx}
\Vert(\psi^1,\psi^2)\Vert^2_{W^{s'+1,2}_\mathbf{x}L^2_M}\mathrm{d}t
\end{aligned}
\end{equation}
where $\psi^1$ and $\psi^2$ are two  solutions to \eqref{eq:FP}
with data $(\psi_0,\bu^1)$ and $(\psi_0,\bu^2)$ respectively.
\end{theorem}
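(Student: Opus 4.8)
The plan is to re-run, now for the difference of the two solutions and at the lower order $s'\le s-1$, the Maxwellian-weighted energy estimate that underlies Theorem~\ref{thm:mainFP}. Set $\psi^{12}:=\psi^1-\psi^2$ and $\bu^{12}:=\bu^1-\bu^2$; subtracting the two weak formulations of Definition~\ref{def:strsolmartFP} shows that $\psi^{12}$ is a weak (in $\bq$) solution of
\begin{align*}
\partial_t\psi^{12}+\divx(\bu^1\psi^{12})+\divx(\bu^{12}\psi^2)
&=\varepsilon\,\Delx\psi^{12}-\divq\big((\nabx\bu^1)\bq\,\psi^{12}\big)-\divq\big((\nabx\bu^{12})\bq\,\psi^2\big)\\
&\quad+\frac{A_{11}}{4\lambda}\,\divq\Big(M\nabq\big(\tfrac{\psi^{12}}{M}\big)\Big),
\end{align*}
with $\psi^{12}|_{t=0}=0$ and the no-flux condition encoded in the weak form. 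For every multi-index $\alpha$ with $|\alpha|\le s'$ I would apply $\partial^\alpha_\bx$ and test the resulting identity against $\partial^\alpha_\bx\psi^{12}/M$ (legitimate by density in $H^1_M(B)$), integrate over $\tor\times B$, and sum over $|\alpha|\le s'$; no $\bq$-boundary terms appear. The left-hand side produces $\tfrac{\dd}{\dd t}\|\psi^{12}\|^2_{W^{s',2}_\bx L^2_M}$, the non-negative term $2\varepsilon\|\psi^{12}\|^2_{W^{s'+1,2}_\bx L^2_M}$, and the dissipation $\tfrac{A_{11}}{2\lambda}\|\psi^{12}\|^2_{W^{s',2}_\bx H^1_M}$, up to lower-order terms that are absorbed into the Gronwall argument.

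On the right-hand side, the terms carrying only $\psi^{12}$ — namely $\divx(\bu^1\psi^{12})$ and $\divq((\nabx\bu^1)\bq\,\psi^{12})$ — are handled exactly as in the proof of Theorem~\ref{thm:mainFP}. The transport term produces, after integration by parts, $\divx\bu^1$ times a square plus $\bx$-commutators that are controlled by the Moser estimate \eqref{commutatorEstimate} and $W^{s'+1,2}_\bx\hookrightarrow W^{1,\infty}_\bx$. For the drift term one integrates by parts in $\bq$ and uses $\divq\big(M(\nabx\bu^1)\bq\big)=M\big(\divx\bu^1-U'(\tfrac12|\bq|^2)\,\bq\cdot(\nabx\bu^1)\bq\big)$; the only dangerous contribution is the one carrying the FENE factor $U'(\tfrac12|\bq|^2)|\bq|^2\sim(1-|\bq|)^{-1}$ near $\partial B$, which is absorbed into the $H^1_M$-dissipation through Lemma~\ref{lem:A1} at the cost of $\delta$-terms (this is what turns the dissipation coefficient into $\tfrac{A_{11}}{2\lambda}-8\delta$). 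All of these terms leave a remainder $\lesssim\int_0^t\|(\bu^1,\bu^2)\|^2_{W^{s'+1,2}_\bx}\,\|\psi^{12}\|^2_{W^{s',2}_\bx L^2_M}\dt$, of Gronwall type. The genuinely new contributions are the source terms $\divx(\bu^{12}\psi^2)$ and $\divq((\nabx\bu^{12})\bq\,\psi^2)$: distributing $\partial^\alpha_\bx$ by Leibniz, every summand carries at least one $\bx$-derivative of $\bu^{12}$, and using H\"older's inequality, \eqref{commutatorEstimate}, Sobolev embedding, and — for the drift source, since $\bq$ is bounded — a plain Cauchy--Schwarz in $\bq$, one bounds them by $\delta\|\psi^{12}\|^2_{W^{s',2}_\bx H^1_M}+c_\delta\|\psi^{12}\|^2_{W^{s',2}_\bx L^2_M}+c_\delta\|\bu^{12}\|^2_{W^{s'+1,2}_\bx}\,\|(\psi^1,\psi^2)\|^2_{W^{s'+1,2}_\bx L^2_M}$. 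It is here that $\psi^2$ (and, after Young's inequality, $\psi^1$ as well) enters one spatial order above $\psi^{12}$, which forces the restriction $s'\le s-1$.

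Collecting both groups, choosing $\delta$ small, and integrating in time leaves a differential inequality $\tfrac{\dd}{\dd t}\|\psi^{12}\|^2_{W^{s',2}_\bx L^2_M}\le g(t)\,\|\psi^{12}\|^2_{W^{s',2}_\bx L^2_M}+h(t)$ — with the $\varepsilon$- and $H^1_M$-dissipations as extra non-negative quantities on the left — where $g(t)\lesssim\|(\bu^1,\bu^2)\|^2_{W^{s'+1,2}_\bx}$ and $h(t)\lesssim\|\bu^{12}\|^2_{W^{s'+1,2}_\bx}\,\|(\psi^1,\psi^2)\|^2_{W^{s'+1,2}_\bx L^2_M}$. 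Since $\psi^{12}(0)=0$, Gronwall's lemma yields the asserted estimate, and taking $\bu^1=\bu^2$ makes $h\equiv0$, whence $\psi^{12}\equiv0$, i.e. the uniqueness claimed in Theorem~\ref{thm:mainFP}. The main obstacle I anticipate is exactly the derivative count in the source terms just discussed: because $\divx(\bu^{12}\psi^2)$ and $\divq((\nabx\bu^{12})\bq\,\psi^2)$ place $\psi^2$ one $\bx$-derivative above the level at which $\psi^{12}$ is measured, the difference estimate — and hence the contraction step of the final fixed-point scheme — cannot be closed in the top space $W^{s,2}_\bx L^2_M$ but only in $W^{s',2}_\bx L^2_M$ with $s'\le s-1$. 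A secondary difficulty is the careful bookkeeping required to isolate precisely the $(1-|\bq|)^{-1}$-singular part of the FENE drift term and route only that part through Lemma~\ref{lem:A1}, ensuring that the total $\delta$-loss against the $H^1_M$-dissipation stays within the budget $8\delta$.
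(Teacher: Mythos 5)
Your overall strategy coincides with the paper's proof: the same difference equation for $\psi^{12}=\psi^1-\psi^2$ (up to the cosmetic choice of which velocity multiplies $\psi^{12}$ in the splitting of the transport term), application of $\partial^\alpha_\bx$ for $|\alpha|\le s'\le s-1$, testing with $\partial^\alpha_\bx\psi^{12}/M$, commutator bounds via \eqref{commutatorEstimate} and $W^{s',2}_\bx\hookrightarrow W^{1,\infty}_\bx$, absorption of $\delta$-terms into the $H^1_M$-dissipation, source terms bounded by $\Vert\bu^{12}\Vert^2_{W^{s'+1,2}_\bx}\Vert(\psi^1,\psi^2)\Vert^2_{W^{s'+1,2}_\bx L^2_M}$ (exactly the loss that forces $s'\le s-1$), Gronwall from $\psi^{12}(0)=0$, and uniqueness by taking $\bu^1=\bu^2$.

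The one place where you deviate is the treatment of the FENE drift term, and there your argument as written does not go through. The paper never performs the second integration by parts in $\bq$ producing $\divq\big(M(\nabx\bu^1)\bq\big)=M\big(\divx\bu^1-U'(\tfrac12|\bq|^2)\,\bq\cdot(\nabx\bu^1)\bq\big)$: the drift already sits against $\nabq$ of the test function, and after applying $\partial^\alpha_\bx$ (with the $\bx$-derivatives kept on $\bu$, the remainder going into the commutators $K_3^a$, $K_3^b$) the relevant integrals have the form $\int M(\nabx\partial^\alpha_\bx\bu)\bq\,(\psi/M)\cdot\nabq\partial^\alpha_\bx(\psi^{12}/M)$, which are estimated by Young's inequality using only $|\bq|^2<b$; no singular factor ever appears, and the budget $\tfrac{A_{11}}{2\lambda}-8\delta$ is simply the count of such absorptions, not FENE bookkeeping. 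If you insist on your route, the quantity you must absorb, $\int_B M\,U'(\tfrac12|\bq|^2)|\bq|^2\,\big|\partial^\alpha_\bx\psi^{12}/M\big|^2\dq$, is a weighted $L^2$ quadratic form and is \emph{not} controlled by Lemma \ref{lem:A1}, which bounds the square of the weighted $L^1$ integral $\int_B|\psi|/(1-|\bq|)\dq$ and is used in the paper only to control $\mathbb T(\psi)$ in the fixed-point step; you would instead need a weighted Hardy/Poincar\'e-type inequality for the FENE Maxwellian (true for $b>2$, but nowhere stated in the paper). Moreover, the perfect-square structure your double integration by parts relies on only arises at order $\alpha$ if you leave $\nabx\bu^1$ underived in the principal drift term, which changes the commutator bookkeeping you describe. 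Replacing that step by the plain Young-inequality treatment closes the estimate with less machinery; the rest of your argument matches the paper's.
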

The proof of Theorem \ref{thm:main'FP} can be found in Section \ref{sec:uniqFokkerPlanck}.

\subsection{A priori estimates}
\label{sec:exisFokkerPlanck}
In order to justify the following calculations, we need to work with an approximate system.
Following \cite{masmoudi2008well}, we consider an orthonormal basis
$\{\phi_n,\dots,\phi_n\}$ of eigenfunction of the operator $L \psi =- \divq  \big( M \nabq  \big(\frac{\psi}{M} \big)\big)$ on $L^2_M$ with the domain
\begin{align*}
D(L)=\bigg\{\psi \in L^2_M \, :\,  \psi\in H^1_M, \quad  \divq\bigg( M \nabq  \bigg(\frac{\psi}{M} \bigg) \bigg) \in L^2_M, \quad  M \nabq \bigg( \frac{\psi}{M} \bigg)\bigg\vert_{\partial B} =0
  \bigg\},
\end{align*} 
where the boundary condition is interpreted in the weak sense. We denote $Y_n = {\rm span} \left\{ \phi_1, \dots, \phi_n \right\}$.
Furthermore,
we introduce a cut-off in the last term in order to avoid the blow-up of $M$ for $\bq$ close to $\partial B$.
So let $\chi_n=\chi_n(\bq)\in C^1(B)$, $B:=B(\bm{0},\sqrt{b})$ where  $\chi_n=1$ in $B(\bm{0}, \sqrt{b}-\frac{2}{n})$ and  $\chi_n=0$ when $\vert \bq \vert \geq \sqrt{b}-\frac{1}{n}$.
It will be needed to ensure that certain terms belong to $L^2_M$.
Similar to Section \ref{subsec:galerkin}, we consider a smooth orthonormal basis $\left\{ \omega_m \right\}_{m=1}^\infty$ of the space $L^{2}(\mt)$ with $X_n = {\rm span} \left\{ \omega_1, \dots, \omega_n \right\}$. We denote by $\Pi^n_{\bx}:L^2(\mt)\rightarrow X_n$ and $\Pi^n_{\bq}:L^2_M(B)\rightarrow Y_n$
the corresponding orthogonal projections.
We aim to solve for $n\in\mathbb N$,
\begin{equation} \label{eq:FPn}
\begin{split}
\partial_t\int_{\mt \times B } \psi^n\, \phi_i \omega_j \dq &\dx
+  \int_{\mt \times B } \divx  \Big(M\mathbf{u} \chi_n\Pi^n_{\bq}\frac{\psi^n}{M}\Big)\, \phi_i \omega_j \dq \dx
\\
&=\int_{\mt \times B } \varepsilon \Delx \psi^n\, \phi_i\omega_j \dq\dx
-  \int_{\mt \times B }  \divq  \big( (\nabx   \mathbf{u}) \mathbf{q}\chi_n\psi^n \big)  \, \phi_i\omega_j \dq\dx
\\& +
\frac{A_{11}}{4\lambda} \int_{\mt \times B } \divq\bigg( M \nabq  \bigg(\frac{\psi^n}{M} \bigg) \bigg)\, \phi_i \omega_j \dq\dx,\quad i = 1, \dots, n,\ j=1,\dots,n.\\
\psi^n(0)&=\psi_0^n:=\Pi^n_{\bx}\Pi^n_{\bq} \psi_0.
\end{split}
\end{equation}
 Equation \eqref{eq:FPn} is an ODE which can be solved locally in time.
We start by showing
an estimate for $\psi^n$ in $L^2_\bx L^2_M$  which will imply global solvability of \eqref{eq:FPn}.
We can take $ \Pi^n_{\bq}\frac{\psi^n}{M}$ as test function in \eqref{eq:FPn} and integrate over $\Q_t \times B$ for which we obtain
\begin{equation}
\begin{aligned}
\label{distriFormPsiApprZ0}
&\frac{1}{2}\int_{\tor \times B} M  \bigg\vert\frac{ \psi^n(\sigma) }{M} \bigg\vert^2\dq \dx \bigg]_{\sigma=0}^{\sigma=t}     
+
\varepsilon
\int_{\Q_t \times B} M \bigg\vert \nabx \frac{\psi^n}{M} \bigg\vert^2 \dq  \dx \, \mathrm{d}\sigma
\\
&  
+
\frac{A_{11}}{4\lambda}
\int_{\Q_t \times B} M \bigg\vert \nabq \frac{\psi^n}{M} \bigg\vert^2 \dq  \dx \, \mathrm{d}\sigma
=
\int_{\Q_t \times B}\chi_nM (\nabx   \bu )\mathbf{q} \frac{\psi^n}{M} \cdot \nabq   \Pi^n_{\bq}\frac{\psi^n}{M} \dq \dx \, \mathrm{d}\sigma
\\&
+
\int_{\Q_t \times B}\frac{1}{2}\chi_n M (\divx \bu ) \bigg\vert  \Pi^n_{\bq}\frac{ \psi^n}{M} \bigg\vert^2 \dq \dx \, \mathrm{d}\sigma  
-
\int_{\Q_t \times B} M\chi_n \,\divx  \bu   \,  \Big|\Pi^n_{\bq}\frac{\psi^n}{M}\Big|^2 \dq \dx \, \mathrm{d}\sigma\\&= :(I)^n+(II)^n+(III)^n. 
\end{aligned}
\end{equation}
As a consequence of Young's inequality, H\"older's inequality and $\vert \bq\vert^2<b$, we obtain the estimates
\begin{align*}
(I)^n
&\leq
\delta \int_{\Q_t \times B} M
\bigg\vert
\nabq \frac{\psi^n}{M}  \bigg\vert^2 \dq  \dx \, \mathrm{d}\sigma
+
c_\delta(b)
\int_{\Q_t} \vert \nabx  \bu  \vert^2\int_{ B} M \bigg\vert  \frac{\psi^n}{M} \bigg\vert^2 \dq \dx\, \mathrm{d}\sigma,\\
(II)^n
&\leq 
\frac{1}{2}
\int_0^t \Vert \divx  \bu \Vert_{L^\infty_\bx}\int_{\tor \times B} M \bigg\vert \frac{\psi^n}{M} \bigg\vert^2 \dq \dx\, \mathrm{d}\sigma,\\
(III)^n
&\leq 
\frac{1}{2}
\int_{\Q_t \times B} M
\bigg\vert \frac{\psi^n}{M}  \bigg\vert^2\dq \dx\, \mathrm{d}\sigma
+
\frac{1}{2}
\int_{\Q_t}\vert \divx   \bu  \vert^2 \int_{ B} M \bigg\vert  \frac{\psi^n}{M} \bigg\vert^2 \dq \dx\, \mathrm{d}\sigma
\end{align*}
for any $\delta>0$. Note that we also took into account continuity of $\Pi_{\bq}^n$ on $L^2_M$ and $H_M^1$.
All three terms together can be bounded by
\begin{align*}
\delta \int_0^t\Vert\psi^n\Vert^2_{L^{2}_{\bx}H^1_M}\ds + c \int_0^t (1+\Vert \bu \Vert_{W^{s,2}_\bx}^2) \Vert \psi^n\Vert^2_{L^2_{\bx}L^2_M}\ds
\end{align*}
using Sobolev's embedding.
Plugging this into \eqref{distriFormPsiApprZ0} and using Gronwall's lemma yields
\begin{align}\label{suptx3Z}
\begin{aligned}
\sup_{0<t<T}\Vert \psi^n \Vert^2_{L^{2}_\mathbf{x}L^2_M}
&\leq  c\exp\bigg(c\int_0^T\Vert   \bu  \Vert^2_{W^{s,2}_\bx}\,\mathrm{d}\sigma\bigg)\Vert\psi_0\Vert^2_{L^{2}_\mathbf{x}L^2_M}
\end{aligned}
\end{align}
since $\varepsilon\geq0$ and $\frac{A_{11}}{4\lambda}-\delta>0$ for $\delta>0$ small. Note that \eqref{suptx3Z} implies that there is a global solution to \eqref{eq:FPn}.\\
Next apply $\partial^\alpha_\bx$ to \eqref{eq:FPn} to obtain
\begin{equation}
\begin{aligned}
\label{diffQuoZ}
\partial_t &\partial^\alpha_\bx\psi^n 
+ M\chi_n\Pi_{\bq}^n\frac{\psi^n}{M}\, \divx  \partial^\alpha_\bx \bu 
+ M\chi_n\bu \cdot \nabx \partial^\alpha_\bx \Pi_{\bq}^n\frac{\psi^n}{M} \\
&=
\varepsilon \Delx \partial^\alpha_\bx\psi^n
-
 \divq  \big( \chi_n(\nabx \partial^\alpha_\bx   \mathbf{u}) \mathbf{q} \, \psi^n \big) 
 + \frac{A_{11}}{4\lambda}
  \divq  \bigg(M \nabq \partial^\alpha_\bx \bigg(\frac{\psi^n}{M} \bigg)
\bigg)
+ K_1^n + K_2^n + \divq K_3^n
\end{aligned}
\end{equation}
in $(X_n\otimes Y_n)'$, where
\begin{align*}
&K_1^n = M\chi_n\Pi_{\bq}^n\frac{\psi^n}{M}\, \partial^\alpha_\bx \divx \bu - \partial^\alpha_\bx \Big[M\chi_n\Pi_{\bq}^n\frac{\psi^n}{M} \, \divx \bu\Big],
\\ 
&K_2^n =\bu \cdot \partial^\alpha_\bx  \nabx M\chi_n\Pi_{\bq}^n\frac{\psi^n}{M} - \partial^\alpha_\bx \Big[\bu \cdot \nabx M\chi_n \Pi_{\bq}^n\frac{\psi^n}{M}\Big],
\\
&K_3 = (\partial^\alpha_\bx  \nabx \bu) \bq\, \chi_n\psi^n - \partial^\alpha_\bx  [(\nabx \bu) \bq \, \chi_n\psi^n].
\end{align*}
As a consequence of \eqref{commutatorEstimate} and $\vert \bq\vert<\sqrt{b}$, it follows that
\begin{align}
&\Vert K_1^n \Vert_{L^2_\bx} \lesssim_{s,b}
M\Big\Vert \nabx \Pi_{\bq}^n\frac{\psi^n}{M} \Big\Vert_{L^\infty_\bx}
\Vert \nabx^s \bu \Vert_{L^2_\bx}  
+
M\Vert \divx \bu \Vert_{L^\infty_\bx}
\Big\Vert \nabx^s\Pi_{\bq}^n\frac{ \psi^n}{M} \Big\Vert_{L^2_\bx},
\label{commutatorK1Z}
\\
&\Vert K_2^n \Vert_{L^2_\bx} \lesssim_{s,b}
M\Vert \nabx \bu \Vert_{L^\infty_\bx}
\Big\Vert \nabx^s \Pi_{\bq}^n\frac{\psi^n}{M} \Big\Vert_{L^2_\bx}  
+
M\Big\Vert \nabx \Pi_{\bq}^n\frac{\psi^n}{M} \Big\Vert_{L^\infty_\bx}
\Vert \nabx^s \bu \Vert_{L^2_\bx},
\label{commutatorK2Z}
\\
&\Vert K_3^n \Vert_{L^2_\bx} \lesssim_{s,b}
\Vert \nabx \psi^n \Vert_{L^\infty_\bx}
\Vert \nabx^s \bu \Vert_{L^2_\bx}  
+
\Vert \nabx \bu \Vert_{L^\infty_\bx}
\Vert  \nabx^s \psi^n \Vert_{L^2_\bx}. \label{commutatorK3Z}
\end{align}
We multiply \eqref{diffQuoZ} by $\partial^\alpha_\bx \Pi^n_{\bq}\frac{\psi^n}{M}$ and integrate over $ \Q_t \times B$ to gain
\begin{equation}
\begin{aligned}
\label{distriFormPsiApprZ}
&\frac{1}{2}\int_{\tor \times B} M  \bigg\vert \partial^\alpha_\bx\frac{ \psi^n(\sigma) }{M} \bigg\vert^2\dq \dx \bigg]_{\sigma=0}^{\sigma=t}     
+
\varepsilon
\int_{\Q_t \times B} M \bigg\vert \nabx\partial^\alpha_\bx \frac{\psi^n}{M} \bigg\vert^2 \dq  \dx \, \mathrm{d}\sigma
\\
&  
+
\frac{A_{11}}{4\lambda}
\int_{\Q_t \times B} M \bigg\vert \nabq \partial^\alpha_\bx \frac{\psi^n}{M} \bigg\vert^2 \dq  \dx \, \mathrm{d}\sigma
=
\int_{\Q_t \times B}M (\nabx \partial^\alpha_\bx   \bu )\mathbf{q} \frac{\chi_n\psi^n}{M} \cdot \nabq  \partial^\alpha_\bx \Pi_{\bq}^n\frac{\psi^n}{M} \dq \dx \, \mathrm{d}\sigma
\\&
+
\int_{\Q_t \times B}\frac{1}{2} \chi_nM (\divx \bu ) \bigg\vert  \partial^\alpha_\bx\Pi_{\bq}^n\frac{ \psi^n}{M} \bigg\vert^2 \dq \dx \, \mathrm{d}\sigma  
-
\int_{\Q_t \times B}  M\chi_n \Pi^n_{\bq}\frac{\psi^n}{M}\,\divx  \partial^\alpha_\bx \bu   \cdot\partial^\alpha_\bx \Pi_{\bq}^n\frac{\psi^n}{M} \dq \dx \, \mathrm{d}\sigma 
\\
&+
\int_{\Q_t \times B}
(K_1^n+K_2^n) \cdot \partial^\alpha_\bx \Pi_{\bq}^n\frac{\psi^n}{M} 
\dq \dx \, \mathrm{d}\sigma
-
\int_{\Q_t \times B}
K_3^n\cdot \nabq  \partial^\alpha_\bx\Pi_{\bq}^n \frac{\psi^n}{M} 
\dq \dx \, \mathrm{d}\sigma.
\end{aligned}
\end{equation}
In order to estimate the terms on the right-hand side, we repeatedly  use the embedding $W^{s,2}_\bx \hookrightarrow W^{1,\infty}_\bx$ which follows from $s \geq 1+\frac{d}{2}$. Furthermore,
we use the continuity of $\Pi_{\bq}^n$ on $L^2_M$ and $H_M^1$.
By Young's and H\"older's inequalities (note that $\vert \bq\vert^2<b$), we obtain the estimate
\begin{equation}
\begin{aligned}
\label{Residual1aZ}
\bigg\vert &\int_{\Q_t \times B}M (\nabx \partial^\alpha_\bx   \bu )\mathbf{q} \frac{\chi_n \psi^n}{M} \cdot \nabq  \partial^\alpha_\bx \Pi_{\bq}^n \frac{\psi^n}{M} \dq  \dx \, \mathrm{d}\sigma \bigg\vert\\
&\leq
\delta \int_{\Q_t \times B} M
\bigg\vert
\nabq \partial^\alpha_\bx \frac{\psi^n}{M}  \bigg\vert^2 \dq  \dx \, \mathrm{d}\sigma
+
c_\delta
\int_{\Q_t} \vert \nabx \partial^\alpha_\bx  \bu  \vert^2\int_{ B} M \bigg\vert  \frac{\psi^n}{M} \bigg\vert^2 \dq \dx\, \mathrm{d}\sigma\\
&\leq \delta \int_0^t\Vert\psi^n\Vert^2_{W^{s,2}_{\bx}H^1_M}\ds + c_\delta\int_0^t \Vert\bu\Vert_{W^{s+1,2}_\bx}\Vert\psi^n\Vert^2_{W^{s,2}_{\bx}L^2_M}\ds
\end{aligned}
\end{equation}
for any $\delta>0$ as well as
\begin{equation}
\begin{aligned}
\label{higherStep3aZ}
\bigg\vert \int_{\Q_t \times B}  \frac{1}{2}\chi_n M (\divx \bu ) \bigg\vert  \partial^\alpha_\bx \Pi_{\bq}^n\frac{ \psi^n}{M} \bigg\vert^2  \dq \dx\, \mathrm{d}\sigma \bigg\vert
&\leq 
\frac{1}{2}
\int_0^t \Vert \divx  \bu \Vert_{L^\infty_\bx}\int_{\tor \times B} M \bigg\vert \partial^\alpha_\bx \frac{\psi^n}{M} \bigg\vert^2 \dq \dx\, \mathrm{d}\sigma\\&\lesssim \int_0^t \Vert\bu\Vert_{W^{s,2}_\bx}\Vert\psi^n\Vert^2_{W^{s,2}_{\bx}L^2_M}\ds.
\end{aligned}
\end{equation}
Furthermore, we have
\begin{equation}\label{eq:1010}
\begin{aligned}
\bigg\vert &
\int_{\Q_t \times B} M \chi_n \Pi_{\bq}^n \frac{\psi^n}{M} \,\divx  \partial^\alpha_\bx \bu   \cdot \partial^\alpha_\bx \Pi_{\bq}^n\frac{\psi^n}{M} \dq \dx\, \mathrm{d}\sigma  
\bigg\vert\\
&\leq 
\frac{1}{2}
\int_{\Q_t \times B} M
\bigg\vert
 \partial^\alpha_\bx \frac{\psi^n}{M}  \bigg\vert^2\dq \dx\, \mathrm{d}\sigma
+
\frac{1}{2}
\int_{\Q_t}\vert \divx \partial^\alpha_\bx  \bu  \vert^2 \int_{ B} M \bigg\vert  \frac{\psi^n}{M} \bigg\vert^2 \dq \dx\, \mathrm{d}\sigma\\
&\lesssim\int_0^t\Vert\psi^n\Vert^2_{W^{s,2}_{\bx}L^2_M}\ds+\int_0^t \Vert\bu\Vert_{W^{s+1,2}_\bx}\Vert\psi^n\Vert^2_{W^{s,2}_{\bx}L^2_M}\ds
\end{aligned}
\end{equation}
and  for any $\delta>0$,
\begin{equation}
\begin{aligned}
\label{Residual12aZ}
\bigg\vert 
\int_{\Q_t \times B}&
K^n_3\cdot \nabq  \partial^\alpha_\bx\Pi_{\bq}^n \frac{\psi^n}{M} 
\dq \dx\, \mathrm{d}\sigma
 \bigg\vert \\
&\leq 
c_\delta \int_{\Q_t \times B}
\frac{1}{M}\vert K^n_3\vert^2 \
\dq \dx\, \mathrm{d}\sigma
+\delta
\int_{\Q_t \times B}
M \bigg\vert \nabq  \partial^\alpha_\bx \frac{\psi^n}{M} \bigg\vert^2
\dq \dx\, \mathrm{d}\sigma\\
&= 
c_\delta \int_{\Q_t \times B}
\frac{1}{M}\vert K^n_3\vert^2 \
\dq \dx\, \mathrm{d}\sigma+ \delta \int_0^t\Vert\psi^n\Vert^2_{W^{s,2}_{\bx} H^1_M}\ds.
\end{aligned}
\end{equation}
Analogously, we use Young's inequality to estimate $K^n_1$ and $K^n_2$.
Substituting \eqref{Residual1aZ}--\eqref{Residual12aZ} into \eqref{distriFormPsiApprZ} and choosing $\delta$ small enough yields
\begin{equation}
\begin{aligned}
\label{suptxZ} 
\Vert\psi^n (\sigma)\Vert^2_{W^{s,2}_{\bx}L^2_M}\Big]_{\sigma=0}^{\sigma=t}
&+2\varepsilon 
\int_0^t\Vert\psi^n\Vert^2_{W^{s+1,2}_{\bx}L^2_M}\, \mathrm{d}\sigma
+ \bigg(\frac{A_{11}}{2\lambda} - 4\delta \bigg) \int_0^t\Vert\psi^n\Vert^2_{W^{s,2}_{\bx}H^1_M}\ds
\\&
\lesssim \int_0^t\Vert\psi^n\Vert^2_{W^{s,2}_{\bx}L^2_M}\ds+\int_0^t \Vert\bu\Vert^2_{W^{s+1,2}_\bx}\Vert\psi^n\Vert^2_{W^{s,2}_{\bx}L^2_M}\ds\\
&+
\int_{\Q_t \times B}
\frac{1}{M}\big( \vert K^n_1\vert^2 +\vert K^n_2\vert^2+\vert K^n_3\vert^2 \big) 
\dq \dx\, \mathrm{d}\sigma.
\end{aligned}
\end{equation}
On the other hand, by \eqref{commutatorK1Z}--\eqref{commutatorK3Z} we have
\begin{equation}
\begin{aligned}
\label{suptxKZ}
\sum_{i=1}^3
\int_{\Q_t \times B}
\frac{1}{M} \vert K^n_i\vert^2 
\dq \dx \,\mathrm{d}\sigma
&\lesssim
\int_0^t \Vert  \nabx^s  \bu  \Vert^2_{L^2_\bx} \bigg(\sup_{\bx \in \tor}\int_{ B}M\bigg\vert \nabx \frac{  \psi^n  }{M} \bigg\vert^2 \dq\bigg)\,\mathrm{d}\sigma 
\\&
+ 
\int_0^t  \Vert \nabx  \bu \Vert_{L^\infty_\bx}^2
\int_{\tor\times B}M\bigg\vert \nabx^s \frac{  \psi^n  }{M} \bigg\vert^2 \dq \dx\,\mathrm{d}\sigma
\\ &\lesssim
\int_0^t \Vert  \bu  \Vert^2_{W^{s,2}_\bx}
\Vert\psi^n\Vert^2_{W^{s,2}_{\bx}L^2_M}\ds.
\end{aligned}
\end{equation}
Substituting \eqref{suptxKZ} into \eqref{suptxZ} yields
\begin{equation}
\begin{aligned}
\label{suptx4Z}
\Vert\psi^n(\sigma)\Vert^2_{W^{s,2}_{\bx}L^2_M}\Big]_{\sigma=0}^{\sigma=t}
&+2\varepsilon 
\int_0^t\Vert\psi^n\Vert^2_{W^{s+1,2}_{\bx}L^2_M}\ds
+ \bigg(\frac{A_{11}}{2\lambda} - 4\delta \bigg)
\int_0^t\Vert\psi^n\Vert^2_{W^{s,2}_{\bx}H^1_M}\ds
\\&
\lesssim\int_0^t (1+\Vert \bu \Vert^2_{W^{s+1,2}_\bx})\Vert\psi^n\Vert^2_{W^{s,2}_{\bx}L^2_M}\ds .
\end{aligned}
\end{equation}
By Gronwall's lemma we obtain
\begin{align}\label{eq:1010b}
\begin{aligned}
\sup_{0<t<T}\Vert\psi^n (t)\Vert^2_{W^{s,2}_\mathbf{x}L^2_M}&+2\varepsilon\int_0^T   \Vert \psi^n \Vert^2_{W^{s+1,2}_\mathbf{x}L^2_M}\,\mathrm{d}t
+ \bigg(\frac{A_{11}}{2\lambda} - 4\delta \bigg)
\int_0^T\Vert\psi^n\Vert^2_{W^{s,2}_\mathbf{x}H^1_M}\,\mathrm{d}t\\
&\leq  c \exp\bigg(c \int_0^T\Vert   \bu  \Vert^2_{W^{s+1,2}_\bx}\,\mathrm{d}t\bigg)\Vert\psi_0\Vert^2_{W^{s,2}_\mathbf{x}L^2_M}
\end{aligned}
\end{align}
uniformly in $n$. It is now standard to pass to the limit $n\rightarrow\infty$ in \eqref{eq:FPn} and to show that
\eqref{eq:1010b} also holds in the limit. The proof of Theorem \ref{thm:mainFP} is hereby complete.

\subsection{Difference estimate for the Fokker-Planck equation}
\label{sec:uniqFokkerPlanck}
Given the estimates from the previous section (see, in particular, Theorem \ref{thm:mainFP}) the solution possesses enough regularity to justify the following calculations.
Let $\psi^1$ and $\psi^2$ be two solutions of \eqref{eq:FP} with data $(\bu^1,\psi_0)$ and $(\bu^2,\psi_0)$ respectively.
We set $\psi^{12}=\psi^1 -\psi^2$ and $\bu^{12}=\bu^1 -\bu^2$ so that  $\psi^{12}$ solves
\begin{align}
\label{fokkerPlanknew0}
\partial_t \psi^{12} + \divx  (\mathbf{u}^2 \psi^{12}) + \divx  (\mathbf{u}^{12} \psi^1) 
=
\varepsilon \Delx \psi^{12}
-
 \divq  \big( (\nabx   \mathbf{u}^1) \mathbf{q}\psi^{12} \big) 
\nonumber\\
-
 \divq  \big( (\nabx   \mathbf{u}^{12}) \mathbf{q}\psi^2 \big) 
 +
\frac{A_{11}}{4\lambda}  \divq  \bigg( M \nabq  \bigg(\frac{\psi^{12}}{M} \bigg)
\bigg)
\end{align}
subject to the following initial and boundary conditions
\begin{align}
& \mathbf{u}^{12}\vert_{t=0}= \mathbf{u}^{12}_0 =0
&\quad \text{in }  \tor,
\label{initialDensityVelonew0}
\\
&\psi^{12} \vert_{t=0}=\psi^{12}_0 = 0
& \quad \text{in } \tor \times B,
\label{fokkerPlankIintialnew0}
\\
&\psi^{12} =0
&\quad \text{on }  [0,T] \times \tor \times \partial B. \label{fokkerPlankBoundarynew0}
\end{align}
We now wish to establish a priori estimates in the spirit of Section \ref{sec:exisFokkerPlanck} for the difference $\psi^{12}$. We will repeatedly use the embedding
$W^{s',2}_{\bx}\hookrightarrow W^{1,\infty}_{\bx}$. We start by proving a counterpart of \eqref{suptx3Z}. Testing \eqref{fokkerPlanknew0} with $ \frac{\psi^{12}}{M}$ and integrating over $\Q_t \times B$ yield
\begin{equation}
\begin{aligned}
\label{distriFormPsiApprZnew0}
&\frac{1}{2}\int_{\tor \times B} M  \bigg\vert \frac{ \psi^{12}(t)}{M} \bigg\vert^2 \dq \dx  
+
\varepsilon
\int_{\Q_t \times B} M \bigg\vert \nabx \frac{\psi^{12}}{M} \bigg\vert^2 \dq  \dx \, \mathrm{d}\sigma
\\&  
+
\frac{A_{11}}{4\lambda}
\int_{\Q_t \times B} M \bigg\vert \nabq  \frac{\psi^{12}}{M} \bigg\vert^2 \dq  \dx\, \mathrm{d}\sigma  
=
\int_{\Q_t \times B}M (\nabx \bu^1 )\mathbf{q} \frac{\psi^{12}}{M} \cdot \nabq \frac{\psi^{12}}{M} \dq \dx\, \mathrm{d}\sigma
\\&  
+
\int_{\Q_t \times B}M (\nabx \bu^{12} )\mathbf{q} \frac{\psi^2}{M} \cdot \nabq \frac{\psi^{12}}{M} \dq \dx\, \mathrm{d}\sigma
+
\int_{\Q_t \times B}\frac{1}{2} M (\divx \bu^2 ) \bigg\vert \frac{ \psi^{12}}{M} \bigg\vert^2 \dq \dx\, \mathrm{d}\sigma 
\\&
-
\int_{\Q_t \times B} M  \bu^{12}\cdot \nabx \frac{ \psi^1}{M}\cdot \frac{ \psi^{12}}{M}  \dq \dx \, \mathrm{d}\sigma  
-
\int_{\Q_t \times B}  \psi^{12} \,\divx \bu^2   \cdot \frac{\psi^{12}}{M} \dq \dx \, \mathrm{d}\sigma  
\\&
-
\int_{\Q_t \times B}  \psi^1 \,\divx  \bu^{12}   \cdot  \frac{\psi^{12}}{M} \dq \dx \, \mathrm{d}\sigma=:J_1+\dots J_6.
\end{aligned}
\end{equation}
The terms on the right-hand side can be estimates as follows:
\begin{align*}
J_1&\lesssim \int_0^t\|\nabla_x\bu^1\|_{L^\infty_{\bx}}\Vert\psi^{12}\Vert_{L^{2}_\mathbf{x}L^2_M}\Vert\psi^{12}\Vert_{L^{2}_\mathbf{x}H^1_M}\ds\\
&\leq\,\delta\,\int_0^t\Vert\psi^{12}\Vert^2_{L^{2}_\mathbf{x}H^1_M}\ds +\,c_\delta\,\int_0^t\|\bu^1\|_{W^{s',2}_\mathbf{x}}^2\Vert\psi^{12}\Vert^2_{L^{2}_\mathbf{x}L^2_M}\ds,\\
J_2&\lesssim \int_0^t\|\nabla_x\bu^{12}\|_{L^\infty_{\bx}}\Vert\psi^{2}\Vert_{L^{2}_\mathbf{x}L^2_M}\Vert\psi^{12}\Vert_{L^{2}_\mathbf{x}H^1_M}\ds\\
&\leq\,\delta\,\int_0^t\Vert\psi^{12}\Vert^2_{L^{2}_\mathbf{x}H^1_M}\ds
+\,c_\delta\,\int_0^t\|\bu^{12}\|_{W^{s',2}_\mathbf{x}}^2\Vert\psi^{2}\Vert^2_{L^{2}_\mathbf{x}L^2_M}\ds,\\
J_3,J_5&\lesssim \int_0^t\|\bu^2\|_{W^{s',2}_\mathbf{x}}\Vert\psi^{12}\Vert^2_{L^{2}_\mathbf{x}L^2_M}\ds,\\
J_4,J_6&\lesssim \int_0^t\|\nabx\bu^{12}\|_{L^\infty_{\bx}}\Vert\psi^{1}\Vert_{W^{1,2}_\mathbf{x}L^2_M}\Vert\psi^{12}\Vert_{L^{2}_\mathbf{x}L^2_M}\ds\\
&\lesssim\,\int_0^t\Vert\psi^{12}\Vert^2_{L^{2}_\mathbf{x}L^2_M}\ds+\int_0^t\|\bu^{12}\|_{W^{s',2}_\mathbf{x}}^2\Vert\psi^{1}\Vert^2_{W^{1,2}_\mathbf{x}L^2_M}\ds,
\end{align*}
where $\delta>0$ is arbitrary. Inserting the above into \eqref{distriFormPsiApprZnew0} yields
\begin{align*}
\Vert\psi^{12}(t)&\Vert^2_{L^{2}_\mathbf{x}L^2_M}+2\varepsilon\int_0^t\Vert\psi^{12}\Vert^2_{W^{1,2}_\mathbf{x}L^2_M}\ds
+  \bigg(\frac{A_{11}}{2\lambda} -4\delta \bigg) \int_0^t\Vert\psi^{12} \Vert^2_{L^{2}_\mathbf{x}H^1_M}\ds \\
&\lesssim \int_0^t\|\bu^{12}\|_{W^{s',2}_\mathbf{x}}^2\Vert(\psi^{1},\psi^2)\Vert^2_{W^{1,2}_\mathbf{x}L^2_M}\ds
+
\int_0^t\big(\|(\bu^{1},\bu^2)\|_{W^{s',2}_\mathbf{x}}^2+1\big)\Vert\psi^{12}\Vert^2_{L^{2}_\mathbf{x}L^2_M}\ds.
\end{align*}
Finally, we obtain from Gronwall's lemma
\begin{align}\label{suptx3Znew}
\begin{aligned}
\Vert\psi^{12}(t)&\Vert^2_{L^{2}_\mathbf{x}L^2_M}
+
2\varepsilon\int_0^t\Vert\psi^{12}\Vert^2_{W^{1,2}_\mathbf{x}L^2_M}\ds
+  \bigg(\frac{A_{11}}{2\lambda} -4\delta \bigg) \int_0^t\Vert\psi^{12}\Vert^2_{L^{2}_\mathbf{x}H^1_M}\ds\\
&\leq\,c\exp\bigg(c \int_0^t\|(\bu^{1},\bu^2)\|_{W^{s',2}_\mathbf{x}}^2\ds\bigg) \int_0^t\|\bu^{12}\|_{W^{s',2}_\mathbf{x}}^2\Vert(\psi^{1},\psi^2)\Vert^2_{W^{1,2}_\mathbf{x}L^2_M}\ds.
\end{aligned}
\end{align}
Now we turn to higher order estimates.
Let the multi-index $\alpha$ satisfy
\begin{align}
\label{sPrimesMinus}
\vert \alpha \vert \leq s' \leq s-1
\end{align}
and apply $\partial^\alpha_\bx$ to \eqref{fokkerPlanknew0}  to obtain
\begin{equation}
\begin{aligned}
\label{diffQuoZnew}
&\partial_t \partial^\alpha_\bx \psi^{12}
+ \psi^{12}\, \divx  \partial^\alpha_\bx \bu^2 
+ \psi^1\, \divx  \partial^\alpha_\bx \bu^{12} 
+ \bu^2 \cdot \nabx \partial^\alpha_\bx \psi^{12}
+ \bu^{12} \cdot \nabx \partial^\alpha_\bx \psi^1
\\&=
\varepsilon \Delx \partial^\alpha_\bx\psi^{12}
-
 \divq  \big( (\nabx \partial^\alpha_\bx   \mathbf{u}^1) \mathbf{q} \, \psi^{12} \big) 
-
 \divq  \big( (\nabx \partial^\alpha_\bx   \mathbf{u}^{12}) \mathbf{q} \, \psi^2 \big) 
\\&
+ \frac{A_{11}}{4\lambda}
  \divq  \bigg( M \nabq \partial^\alpha_\bx \bigg(\frac{\psi^{12}}{M} \bigg)
\bigg)
+ K_1^a + K_1^b + K_2^a + K_2^b + \divq K_3^a+ \divq K_3^b
\end{aligned}
\end{equation}
where
\begin{align*}
&K_1^a = \psi^{12}\, \partial^\alpha_\bx \divx \bu^2 - \partial^\alpha_\bx [\psi^{12} \, \divx \bu^2],
\quad
&K_1^b = \psi^1\, \partial^\alpha_\bx \divx \bu^{12} - \partial^\alpha_\bx [\psi^1 \, \divx \bu^{12}],
\\ 
&K_2^a =\bu^2 \cdot \partial^\alpha_\bx  \nabx \psi^{12} - \partial^\alpha_\bx [\bu^2 \cdot \nabx \psi^{12}],
\quad
&K_2^b =\bu^{12} \cdot \partial^\alpha_\bx  \nabx \psi^1 - \partial^\alpha_\bx [\bu^{12} \cdot \nabx \psi^1],
\\
&K_3^a = (\partial^\alpha_\bx  \nabx \bu^1) \bq\, \psi^{12} - \partial^\alpha_\bx  [(\nabx \bu^1) \bq \, \psi^{12}],
\quad
&K_3^b = (\partial^\alpha_\bx  \nabx \bu^{12}) \bq\, \psi^2 - \partial^\alpha_\bx  [(\nabx \bu^{12}) \bq \, \psi^2].
\end{align*}
Testing \eqref{diffQuoZnew} with $\partial^\alpha_\bx \frac{\psi^{12}}{M}$ and integrating over $\Q_t \times B$ yields
\begin{equation}
\begin{aligned}
\label{distriFormPsiApprZnew}
&\frac{1}{2}\int_{\tor \times B} M  \bigg\vert \partial^\alpha_\bx\frac{ \psi^{12}(t)}{M} \bigg\vert^2 \dq \dx  
+
\varepsilon
\int_{\Q_t \times B} M \bigg\vert \nabx\partial^\alpha_\bx \frac{\psi^{12}}{M} \bigg\vert^2 \dq  \dx \, \mathrm{d}\sigma
\\&  
+
\frac{A_{11}}{4\lambda}
\int_{\Q_t \times B} M \bigg\vert \nabq \partial^\alpha_\bx \frac{\psi^{12}}{M} \bigg\vert^2 \dq  \dx\, \mathrm{d}\sigma  
=
\int_{\Q_t \times B}M (\nabx \partial^\alpha_\bx   \bu^1 )\mathbf{q} \frac{\psi^{12}}{M} \cdot \nabq  \partial^\alpha_\bx \frac{\psi^{12}}{M} \dq \dx\, \mathrm{d}\sigma
\\&  
+
\int_{\Q_t \times B}M (\nabx \partial^\alpha_\bx   \bu^{12} )\mathbf{q} \frac{\psi^2}{M} \cdot \nabq  \partial^\alpha_\bx \frac{\psi^{12}}{M} \dq \dx\, \mathrm{d}\sigma
+
\int_{\Q_t \times B}\frac{1}{2} M (\divx \bu^2 ) \bigg\vert  \partial^\alpha_\bx\frac{ \psi^{12}}{M} \bigg\vert^2 \dq \dx\, \mathrm{d}\sigma 
\\&
-
\int_{\Q_t \times B} M  \bu^{12}\cdot \nabx    \partial^\alpha_\bx\frac{ \psi^1}{M}\cdot    \partial^\alpha_\bx\frac{ \psi^{12}}{M}  \dq \dx \, \mathrm{d}\sigma  
-
\int_{\Q_t \times B}  \psi^{12} \,\divx  \partial^\alpha_\bx \bu^2   \cdot \partial^\alpha_\bx \frac{\psi^{12}}{M} \dq \dx \, \mathrm{d}\sigma  
\\&
-
\int_{\Q_t \times B}  \psi^1 \,\divx  \partial^\alpha_\bx \bu^{12}   \cdot \partial^\alpha_\bx \frac{\psi^{12}}{M} \dq \dx \, \mathrm{d}\sigma
+
\int_{\Q_t \times B}
(K_1^a+K_1^b+K_2^a+K_2^b) \cdot \partial^\alpha_\bx \frac{\psi^{12}}{M} 
\dq \dx\, \mathrm{d}\sigma  
\\
&
-
\int_{\Q_t \times B}
(K_3^a+K_3^b)\cdot \nabq  \partial^\alpha_\bx \frac{\psi^{12}}{M} 
\dq \dx \, \mathrm{d}\sigma,
\end{aligned}
\end{equation}
recall \eqref{fokkerPlankIintialnew0}. Now note that
\begin{equation}
\begin{aligned}
\bigg\vert&
\int_{\Q_t \times B} M  \bu^{12}\cdot \nabx    \partial^\alpha_\bx\frac{ \psi^1}{M}\cdot    \partial^\alpha_\bx\frac{ \psi^{12}}{M}  \dq \dx\, \mathrm{d}\sigma  
\bigg\vert\\
&\lesssim
\int_{\Q_t \times B} M \bigg\vert \partial^\alpha_\bx \frac{\psi^{12}}{M} \bigg\vert^2 \dq \dx\, \mathrm{d}\sigma
+\int_0^t \big\Vert \bu^{12} \big\Vert_{L^\infty_\bx}^2
\int_{\tor \times B} M \bigg\vert \nabx \partial^\alpha_\bx \frac{\psi^1}{M} \bigg\vert^2 \dq \dx\, \mathrm{d}\sigma\\
&\lesssim \int_0^t
\Vert\psi^{12}\Vert^2_{W^{s',2}_\mathbf{x}L^2_M}\ds+\int_0^t\Vert \bu^{12}  \Vert^2_{W^{s',2}_\bx}
\Vert\psi^1\Vert^2_{W^{s'+1,2}_\mathbf{x}L^2_M}\ds
\end{aligned}
\end{equation}
whereas the rest of the right-hand terms of \eqref{distriFormPsiApprZnew} can be estimated exactly as in \eqref{Residual1aZ}--\eqref{Residual12aZ}. In analogy with \eqref{suptxZ}, we obtain
\begin{align}\label{suptxZnew} 
\begin{aligned}
\Vert\psi^{12} &(t)\Vert^2_{W^{s',2}_{\bx}L^2_M}
+2\varepsilon 
\int_0^t\Vert\psi^{12}\Vert^2_{W^{s'+1,2}_{\bx}L^2_M}\, \mathrm{d}\sigma
+ \bigg(\frac{A_{11}}{2\lambda} -8\delta \bigg)
\int_0^t\Vert\psi^{12}\Vert^2_{W^{s',2}_{\bx}H^1_M}\ds 
\\&
\lesssim \int_0^t\big( \Vert(\bu^{1},\bu^2)\Vert^2_{W^{s'+1,2}_\bx}+1\big)\Vert\psi^{12}\Vert^2_{W^{s',2}_{\bx}L^2_M}\ds+\int_0^t \Vert\bu^{12}\Vert^2_{W^{s'+1,2}_\bx}\Vert(\psi^1,\psi^2)\Vert^2_{W^{s'+1,2}_{\bx}L^2_M}\ds\\
&+
\int_{\Q_t \times B}
\frac{1}{M}\big( \vert K_1^a\vert^2 +\vert K_1^b\vert^2 +\vert K_2^a\vert^2+\vert K_2^b\vert^2+\vert K_3^a\vert^2+\vert K_3^b\vert^2 \big) 
\dq \dx\, \mathrm{d}\sigma
\end{aligned}
\end{align}
where $c=c(b,d)$. 

The commutator-terms can be estimated analogously to \eqref{suptxKZ} such that
\begin{equation}
\begin{aligned}
\label{suptxKZnew}
&\int_{\Q_t \times B}
\frac{1}{M} \big( \vert K_1^a\vert^2 +\vert K_1^b\vert^2 +\vert K_2^a\vert^2+\vert K_2^b\vert^2+\vert K_3^a\vert^2+\vert K_3^b\vert^2 \big)  
\dq \dx\, \mathrm{d}\sigma
\\ &\lesssim 
\int_0^t \Big(\Vert \nabx  \bu^1 \Vert_{L^\infty_\bx}^2  +\Vert \nabx  \bu^2 \Vert_{L^\infty_\bx}^2 + \Vert  \nabx^{s'}  \bu^1  \Vert^2_{L^2_\bx} + \Vert \nabx^{s'}  \bu^2  \Vert^2_{L^2_\bx}\Big) 
\int_{\tor\times B}M\bigg\vert \nabx^{s'} \frac{  \psi^{12}  }{M} \bigg\vert^2 
\dq \dx\, \mathrm{d}\sigma
\\&
+ 
\int_0^t \Big( \Vert \nabx  \bu^{12} \Vert_{L^\infty_\bx}^2 +\Vert  \nabx^{s'}  \bu^{12}  \Vert^2_{L^2_\bx}\Big) 
\int_{\tor\times B}
\bigg( M\bigg\vert \nabx^{s'} \frac{  \psi^1  }{M} \bigg\vert^2
+ 
M\bigg\vert \nabx^{s'} \frac{  \psi^2  }{M} \bigg\vert^2 \bigg)
\dq \dx\, \mathrm{d}\sigma\\
&\lesssim \int_0^t\Vert (\bu^{1},\bu^2)  \Vert^2_{W^{s',2}_\bx}
\Vert\psi^{12}\Vert^2_{W^{s',2}_\mathbf{x}L^2_M}\ds+\int_0^t\Vert \bu^{12}  \Vert^2_{W^{s',2}_\bx}
\Vert(\psi^1,\psi^2)\Vert^2_{W^{s',2}_\mathbf{x}L^2_M}\ds
\end{aligned}
\end{equation}
where the hidden constant is $c=c(s,b,d)$. Substituting \eqref{suptxKZnew} into \eqref{suptxZnew} yields
\begin{align}\label{suptxZaanew}
\begin{aligned}
\Vert\psi^{12} &(t)\Vert^2_{W^{s',2}_{\bx}L^2_M}
+2\varepsilon 
\int_0^t\Vert\psi^{12}\Vert^2_{W^{s'+1,2}_{\bx}L^2_M}\, \mathrm{d}\sigma
+  \bigg(\frac{A_{11}}{2\lambda} -8\delta \bigg)
\int_0^t\Vert\psi^{12}\Vert^2_{W^{s',2}_{\bx}H^1_M}\ds
\\&
\lesssim \int_0^t\big( \Vert(\bu^{1},\bu^2)\Vert^2_{W^{s'+1,2}_\bx}+1\big)\Vert\psi^{12}\Vert^2_{W^{s',2}_{\bx}L^2_M}\ds+\int_0^t \Vert\bu^{12}\Vert^2_{W^{s'+1,2}_\bx}\Vert(\psi^1,\psi^2)\Vert^2_{W^{s'+1,2}_{\bx}L^2_M}\ds.
\end{aligned}
\end{align}
As a consequence of Gronwall's lemma, we obtain
\begin{align}\label{f4t}
\begin{aligned}
\sup_{0<t<T}&\Vert\psi^1-\psi^2\Vert^2_{W^{s',2}_\mathbf{x}L^2_M}+2\varepsilon\int_0^T\Vert\psi^1-\psi^2\Vert^2_{W^{s'+1,2}_\mathbf{x}L^2_M}\,\mathrm{d}t
+ \bigg(\frac{A_{11}}{2\lambda} -8\delta \bigg)
\int_0^T\Vert\psi^1-\psi^2\Vert^2_{W^{s',2}_\mathbf{x}H^1_M}\,\mathrm{d}t
\\&
\leq\,c
\exp\bigg( c\int_0^T \Vert (\mathbf{u}^1,\mathbf{u}^2)   \Vert_{W^{s'+1,2}_\mathbf{x}}^2
\, \mathrm{d}t\bigg)\int_0^T\Vert \bu^{1}-\bu^2  \Vert^2_{W^{s'+1,2}_\bx}
\Vert(\psi^1,\psi^2)\Vert^2_{W^{s'+1,2}_\mathbf{x}L^2_M} \mathrm{d}t
\end{aligned}
\end{align}
which completes the proof of Theorem \ref{thm:main'FP}.

\section{The coupled system}
\label{sec:coupled}
After solving the fluid system and the Fokker-Planck equation both independently from each other in the two previous sections, we are now in the position to solve the coupled system.
This shall be done by a fixed point argument which finally leads to the proof of the main result from Theorem \ref{thm:aux}. Set 
$$X^s= L^\infty\big(0,T;  W^{s,2} (\tor; L^2_M(B)) \big)
 \cap L^2\big(0,T; W^{s,2} (\tor; H^1_M(B)) \big)$$
 which is a Banach space equipped with the norm $$\|\cdot\|_{X^s}^2=\sup_{0<t<T}\|\cdot\|_{W^{s,2}_\bx L^2_M}^2+\int_0^T\|\cdot\|_{W^{s,2}_\bx H^1_M}^2\dt.$$
For $\widetilde{\psi} \in X^s$, let $(r,\bu)$ be the unique solution to \eqref{E3}--\eqref{E4} with data $(r_0,\bu_0,\bff,\mathbb T(\widetilde{\psi}))$. The existence of such a solution of class
\begin{align*}
r &\in  C([0,T]; W^{s,2}(\tor)), \ r>0,\quad
\vu\in C([0,T]; W^{s,2}(\tor;\mathbb{R}^d))\cap L^2(0,T;W^{s+1,2}(\mt;\mathbb{R}^d)),
\end{align*}
is guaranteed by Theorems \ref{thm:main} and \ref{thm:main'} noticing that
$\widetilde{\psi} \in X^s$ implies $\mathbb T(\widetilde{\psi})\in C([0,T]; W^{s,2}(\tor; \mathbb{R}^{d \times d}))$ by Lemma \ref{lem:A1}.
Now, given $(r,\bu)$ with the regularity above, we can solve equation \eqref{eq:FP}
using Theorems \ref{thm:mainFP} and \ref{thm:main'FP}. Hence we obtain
a unique $\psi\in X^s$. We denote the mapping $\widetilde{\psi} \mapsto \psi$ by $\mathfrak T$.
We start with the following lemma.
\begin{lemma}\label{lems:1}
Let $s>\frac{d}{2}+2$. There is $T>0$ and $\mathfrak R>0$ such that $\mathfrak T:\mathfrak B^s_{\mathfrak R}\rightarrow \mathfrak B^s_{\mathfrak R}$,
where $\mathfrak B_{\mathfrak R}^s$ is the closed ball in $X^s$ with radius $\mathfrak R$.
\end{lemma}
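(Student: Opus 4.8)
The approach is to track how the two a priori estimates already proved propagate through the map defining $\mathfrak T$. Write $\mathfrak T=\mathfrak T_2\circ\mathfrak T_1$, where $\mathfrak T_1$ sends $\widetilde\psi$ to the unique fluid solution $(r,\bu)$ of \eqref{E3}--\eqref{E4} with stress $\mathbb T(\widetilde\psi)$ (existence and uniqueness by Theorems \ref{thm:main}--\ref{thm:main'}), and $\mathfrak T_2$ sends that $\bu$ to the unique $\psi\in X^s$ solving \eqref{eq:FP} (Theorems \ref{thm:mainFP}--\ref{thm:main'FP}); by definition $\mathfrak T\widetilde\psi:=\psi$. The decisive structural feature is that the Fokker--Planck bound \eqref{eq:thm:mainFP} carries an exponential of $\int_0^T\|\bu\|_{W^{s+1,2}_\bx}^2$, so one cannot make $\|\mathfrak T\widetilde\psi\|_{X^s}$ small; the plan is therefore to fix the radius $\mathfrak R$ in terms of the data alone and to use the smallness of $T$ only to absorb the stress contribution into the fluid estimate.

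First I would bound the elastic stress by the solution norm. Since $\mathbb T(\psi)=\int_B\tfrac{\psi}{M}\,M\,U'(\tfrac12|\bq|^2)\,\bq\otimes\bq\dq$ with $U'(\tfrac12|\bq|^2)=\tfrac{b}{b-|\bq|^2}$, and the Maxwellian vanishes like $(1-|\bq|^2/b)^{b/2}$ at $\partial B$, the Cauchy--Schwarz inequality in $\bq$ gives $|\mathbb T(\psi)(\bx)|\lesssim_b\|\psi(\bx,\cdot)\|_{L^2_M}$, the weight $\int_B M\,|\bq|^4(b-|\bq|^2)^{-2}\dq$ being finite precisely because $b>2$ (one may equivalently invoke Lemma \ref{lem:A1}, which adds an $H^1_M$ correction; the crude version suffices here). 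Since $\partial^\alpha_\bx$ commutes with the $\bq$-integral, this upgrades to $\|\mathbb T(\widetilde\psi)(t)\|_{W^{s,2}_\bx}\lesssim_b\|\widetilde\psi(t)\|_{W^{s,2}_\bx L^2_M}$, hence $\int_0^T\|\mathbb T(\widetilde\psi)\|_{W^{s,2}_\bx}^2\dt\leq cT\mathfrak R^2$ for $\widetilde\psi\in\mathfrak B^s_{\mathfrak R}$, and in particular $\mathbb T(\widetilde\psi)\in C([0,T];W^{s,2})$, as needed to invoke Theorem \ref{thm:main}.

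Next I would feed this into \eqref{est_thm:main}. Because the stress is bounded uniformly over $\mathfrak B^s_{\mathfrak R}$, the existence interval supplied by Theorem \ref{thm:main} can be taken uniform in $\widetilde\psi$, and with $c_\ast=c_\ast(\gamma,s,d)$ one obtains
\begin{align*}
\sup_{0<t<T}\|(r,\bu)\|_{W^{s,2}_\bx}^2+\int_0^T\|\bu\|_{W^{s+1,2}_\bx}^2\dt\leq c_\ast\Big(\|(r_0,\bu_0)\|_{W^{s,2}_\bx}^2+\int_0^T\|\bff\|_{W^{s,2}_\bx}^2\dt+cT\mathfrak R^2\Big)=:E.
\end{align*}
This $\bu$ is an admissible drift for Theorem \ref{thm:mainFP}; choosing $\delta\leq A_{11}/(16\lambda)$ so that the coefficients $1,2\varepsilon,\tfrac{A_{11}}{2\lambda}-4\delta$ on the left of \eqref{eq:thm:mainFP} all dominate $\min\{1,A_{11}/(4\lambda)\}$, that theorem yields $\mathfrak T\widetilde\psi=\psi\in X^s$ with $\|\mathfrak T\widetilde\psi\|_{X^s}^2\leq\tilde c\exp(\hat cE)\|\psi_0\|_{W^{s,2}_\bx L^2_M}^2$, where $\tilde c,\hat c$ may be taken as data-dependent constants valid for all $T\leq T_0$.

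Finally I would close the bootstrap by fixing the constants in the right order. Set $C_{\mathrm{dat}}:=c_\ast\big(\|(r_0,\bu_0)\|_{W^{s,2}_\bx}^2+T_0\sup_{[0,T_0]}\|\bff\|_{W^{s,2}_\bx}^2+1\big)$ and define $\mathfrak R^2:=2\tilde c\exp(\hat cC_{\mathrm{dat}})\|\psi_0\|_{W^{s,2}_\bx L^2_M}^2$, a quantity depending only on the data. With $\mathfrak R$ frozen, choose $T\in(0,T_0]$ small enough that $cT\mathfrak R^2\leq1$ (so that $E\leq C_{\mathrm{dat}}$) and that $T$ stays below the uniform fluid existence time from Theorem \ref{thm:main}. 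Then $\|\mathfrak T\widetilde\psi\|_{X^s}^2\leq\tilde c\exp(\hat cC_{\mathrm{dat}})\|\psi_0\|_{W^{s,2}_\bx L^2_M}^2=\tfrac12\mathfrak R^2\leq\mathfrak R^2$, i.e. $\mathfrak T(\mathfrak B^s_{\mathfrak R})\subset\mathfrak B^s_{\mathfrak R}$. I expect the only genuine difficulty to be this ordering: the exponential in the Fokker--Planck estimate rules out a smallness-based self-mapping argument, so $\mathfrak R$ must be pinned down from the data first and $T$ shrunk afterwards — both to kill the term $cT\mathfrak R^2$ and to remain within the fluid solvability window, the latter resting on the uniform stress bound of the second paragraph.
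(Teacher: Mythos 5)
Your proposal is correct, and its overall bootstrap structure (fix the radius from the data via the Fokker--Planck estimate, then shrink $T$ to absorb the stress contribution in the fluid estimate, in that order) is exactly the paper's; the genuine difference lies in how the stress $\mathbb T(\widetilde\psi)$ is controlled. You use the crude Cauchy--Schwarz bound $\|\mathbb T(\widetilde\psi)(t)\|_{W^{s,2}_\bx}\lesssim_b\|\widetilde\psi(t)\|_{W^{s,2}_\bx L^2_M}$, whose weight $\int_B M\,|\bq|^4(b-|\bq|^2)^{-2}\dq$ is finite precisely because $b>2$ (and because $M$ is $\bx$-independent, so $\partial^\alpha_\bx$ passes under the $\bq$-integral); this reduces the stress contribution to $cT\mathfrak R^2$, killed purely by $T$-smallness after $\mathfrak R$ is frozen. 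The paper instead invokes Lemma \ref{lem:A1} and splits the stress into $\delta\|\widetilde\psi\|^2_{W^{s,2}_\bx H^1_M}+c_\delta\|\widetilde\psi\|^2_{W^{s,2}_\bx L^2_M}$, so that the $\delta$-part is paid for by the time-integrated $H^1_M$ component of the ball norm (not multiplied by $T$) and only the $c_\delta$-part needs $T$ small, with $\delta=\mathfrak R^{-2}$; this is slightly more robust (it does not rely on integrability of the singular weight against $M$, i.e.\ on $b>2$, and it exploits the dissipative part of $X^s$), whereas your version is more elementary and perfectly adequate under the paper's standing assumption $b>2$. Your explicit ordering of constants ($C_{\mathrm{dat}}\to\mathfrak R\to T$, with the uniform-in-$\widetilde\psi$ fluid existence time noted) is exactly the intended logic of the paper's proof and is, if anything, spelled out more carefully than the paper's ``choose $\delta=\mathfrak R^{-2}$ and then $Tc_\delta\leq1$'' step.
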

\begin{proof}
Choosing $T$ sufficiently small we obtain from Theorem \ref{thm:main} that
\begin{align*}
\sup_{0<t<T}  &\big\Vert (r , \mathbf{u}   ) \big\Vert^2_{W^{s,2}_\mathbf{x}}
+ \int_{0}^T\big\Vert \mathbf{u}  \big\Vert^2_{W^{s+1,2}_\mathbf{x}}
\dt \leq c(r_0,\bu_0,\bff)+
c\,\int_0^T
\Vert   \mathbb T(\widetilde{\psi})\Vert_{W^{s,2}_\mathbf{x}}^2
 \, \mathrm{d}t.
\end{align*}
We infer further
for arbitrary $\delta>0$
\begin{align*}
\sup_{0<t<T}  \big\Vert (r , \mathbf{u}   ) \big\Vert^2_{W^{s,2}_\mathbf{x}}
+ \int_{0}^T\big\Vert \mathbf{u}  \big\Vert^2_{W^{s+1,2}_\mathbf{x}}
\dt &\leq c+
Tc_\delta\sup_{0<t<T}\Vert   \widetilde{\psi} \Vert_{W^{s,2}_\mathbf{x}L^2_M}^2+\delta\int_0^T\Vert   \widetilde{\psi} \Vert_{W^{s,2}_\mathbf{x} H^1_M}^2
 \, \mathrm{d}t\\
 &\leq \,c+(Tc_\delta+\delta)\mathfrak R^2
\end{align*}
using Lemma \ref{lem:A1}. Now, we first choose $\delta=\mathfrak R^{-2}$ and then $T$ so small such 
$Tc_\delta\leq 1$. Consequently, the right-hand side is bounded by a constant only depending on the given data. On the other hand, we obtain from Theorem \ref{thm:mainFP}
\begin{align*}
\sup_{0<t<T}\Vert\psi\Vert^2_{W^{s,2}_\mathbf{x}L^2_M}
+ \int_0^T\Vert\psi\Vert^2_{W^{s,2}_\mathbf{x}H^1_M}\,\mathrm{d}t
&\leq  c\exp\bigg(c\int_0^T\Vert   \bu  \Vert^2_{W^{s+1,2}_\bx}\,\mathrm{d}t \bigg) \Vert \psi_0\Vert^2_{W^{s,2}_\mathbf{x}L^2_M}\leq \,c
\end{align*}
where the constant only depends on the data (but we have to choose $T$ small enough). It is bounded by 
$\mathfrak R^2$ provided the latter one was chosen  large enough compared to the data $(r_0,\bu_0,\bff,\psi_0)$.
\end{proof}
As a by-product  of the proof of Lemma \ref{lems:1}, we obtain the following corollary.
\begin{corollary}\label{cor:main}
Let $(r,\bu)$ be the unique solution to \eqref{E3}--\eqref{E4} with data $(r_0,\bu_0,\bff,\mathbb T(\tilde\psi))$. Under the assumptions of Lemma \ref{lems:1} we have
\begin{align*}
\sup_{0<t<T}  \big\Vert (r , \mathbf{u}   ) \big\Vert^2_{W^{s,2}_\mathbf{x}}
+ \int_{0}^T\big\Vert \mathbf{u}  \big\Vert^2_{W^{s+1,2}_\mathbf{x}}
\dt\leq \,c
\end{align*}
where $c$ only depends on the data $(r_0,\bu_0,\bff)$.
\end{corollary}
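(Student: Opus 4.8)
The plan is to simply read off the asserted bound from the chain of inequalities already carried out in the proof of Lemma \ref{lems:1}; no genuinely new argument is required. First I would recall that, since $\widetilde{\psi}\in\mathfrak B^s_{\mathfrak R}$, Lemma \ref{lem:A1} applied inside the definition \eqref{extraStreeTensor} controls $\mathbb T(\widetilde{\psi})$ in $W^{s,2}_{\bx}$ by the $X^s$-norm of $\widetilde{\psi}$, so that after splitting the elastic-stress term with Young's inequality,
\begin{align*}
\int_0^T\Vert \mathbb T(\widetilde{\psi})\Vert_{W^{s,2}_{\bx}}^2\dt
\lesssim T\,c_\delta\sup_{0<t<T}\Vert\widetilde{\psi}\Vert_{W^{s,2}_{\bx}L^2_M}^2
+\delta\int_0^T\Vert\widetilde{\psi}\Vert_{W^{s,2}_{\bx}H^1_M}^2\dt
\leq (Tc_\delta+\delta)\mathfrak R^2
\end{align*}
for every $\delta>0$.

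Next I would feed this into the a priori estimate \eqref{est_thm:main} of Theorem \ref{thm:main} for the unique solution $(r,\bu)$ of \eqref{E3}--\eqref{E4} with data $(r_0,\bu_0,\bff,\mathbb T(\widetilde{\psi}))$, obtaining exactly as in Lemma \ref{lems:1}
\begin{align*}
\sup_{0<t<T}\big\Vert (r,\bu)\big\Vert^2_{W^{s,2}_{\bx}}
+\int_0^T\big\Vert \bu\big\Vert^2_{W^{s+1,2}_{\bx}}\dt
\leq c(r_0,\bu_0,\bff)+(Tc_\delta+\delta)\mathfrak R^2.
\end{align*}
Finally, with the same choices already fixed in the proof of Lemma \ref{lems:1} — first $\delta=\mathfrak R^{-2}$, then $T$ small enough that $Tc_\delta\leq 1$ — the last two terms are together bounded by $2$, so the whole right-hand side is bounded by a constant depending only on the data $(r_0,\bu_0,\bff)$. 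This is precisely the claim.

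There is essentially no obstacle here; the corollary is a bookkeeping consequence of the estimate already derived. The only point deserving a word of care is the accounting of dependencies: the admissible existence time $T$ is selected in terms of $\mathfrak R$ (hence, through the choice of $\mathfrak R$ in Lemma \ref{lems:1}, ultimately in terms of the size of the data), but the constant $c$ in the bound itself is \emph{independent} of $\mathfrak R$ because the $\mathfrak R^2$-contributions were absorbed by the choices of $\delta$ and $T$. In particular $c$ does not depend on $\widetilde{\psi}$, which is immediate since $\widetilde{\psi}$ entered only through its membership in $\mathfrak B^s_{\mathfrak R}$.
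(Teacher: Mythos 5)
Your proposal is correct and follows exactly the paper's route: the corollary is read off as a by-product of the chain of estimates in the proof of Lemma \ref{lems:1}, namely Theorem \ref{thm:main} combined with Lemma \ref{lem:A1} to split $\mathbb T(\widetilde{\psi})$, followed by the choices $\delta=\mathfrak R^{-2}$ and $T$ small. One tiny bookkeeping remark (shared with the paper's own wording): for the two $\mathfrak R^2$-terms to be absorbed into a data-only constant, the smallness condition on $T$ should be read as $Tc_\delta\mathfrak R^{2}\leq 1$ (i.e.\ $T$ small depending on $\mathfrak R$, hence on the data), rather than literally $Tc_\delta\leq 1$.
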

In the next step, we have to show that $\mathfrak T$ is a contraction. Unfortunately, we are unable to show this on $X^s$.  However, 
we can consider a ball in the smaller space $X^s$ equipped with the topology of the larger space $X^{s'}$, where $s'\leq s-1$.
We have the following result.
\begin{lemma}\label{lems:2}
Let $s>\frac{d}{2}+2$. There is $T>0$
 and $\mathfrak R>0$ 
such that $\mathfrak T:\mathfrak B_{\mathfrak R}^s\rightarrow \mathfrak B_{\mathfrak R}^s$ is a contraction on $X^{s'}$.
\end{lemma}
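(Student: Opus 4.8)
The plan is to take $T$ and $\mathfrak R$ from Lemma \ref{lems:1}, so that $\mathfrak T$ maps $\mathfrak B^s_{\mathfrak R}$ into itself, and then show that, after possibly shrinking $T$, the map $\mathfrak T$ is a contraction with respect to the coarser $X^{s'}$-norm, for a fixed choice $s'=s-1$ (note $s'>\frac d2+1$, so the Sobolev embeddings $W^{s',2}_\bx\hookrightarrow W^{1,\infty}_\bx$ used in Sections \ref{sec:uniqNS} and \ref{sec:uniqFokkerPlanck} remain available). Let $\widetilde\psi^1,\widetilde\psi^2\in\mathfrak B^s_{\mathfrak R}$, let $(r^i,\bu^i)$ be the corresponding fluid solutions with elastic stress $\mathbb T^i=\mathbb T(\widetilde\psi^i)$, and let $\psi^i=\mathfrak T\widetilde\psi^i$ be the associated Fokker--Planck solutions. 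The strategy is to chain the two difference estimates already proved: first Theorem \ref{thm:main'} controls $(r^1-r^2,\bu^1-\bu^2)$ in terms of $\mathbb T^1-\mathbb T^2$, and then Theorem \ref{thm:main'FP} controls $\psi^1-\psi^2$ in terms of $\bu^1-\bu^2$.

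First I would bound $\mathbb T^1-\mathbb T^2$ by $\widetilde\psi^1-\widetilde\psi^2$. Since $\mathbb T(\psi)=\int_B\psi\,\mathbf F(\bq)\otimes\bq\dq$ with $\mathbf F(\bq)=\tfrac{b\bq}{b-|\bq|^2}$, the integrand carries the singular weight $\tfrac1{b-|\bq|^2}\sim\tfrac1{1-|\bq|}$ near $\partial B$; applying $\partial^\alpha_\bx$ for $|\alpha|\le s'$ and using Lemma \ref{lem:A1} (as was already done in the construction of the coupled map, where it is invoked to get $\mathbb T(\widetilde\psi)\in C([0,T];W^{s,2}_\bx)$) gives, for any $\eta>0$,
\begin{align*}
\Vert\mathbb T^1-\mathbb T^2\Vert_{W^{s',2}_\bx}^2\leq \eta\,\Vert\widetilde\psi^1-\widetilde\psi^2\Vert_{W^{s',2}_\bx H^1_M}^2+c_\eta\,\Vert\widetilde\psi^1-\widetilde\psi^2\Vert_{W^{s',2}_\bx L^2_M}^2.
\end{align*}
Feeding this into \eqref{est:thm:main'} from Theorem \ref{thm:main'} (with $s'\le s-1$, which is exactly the range allowed there), and using Corollary \ref{cor:main} together with the membership $(r^i,\bu^i)$ in the bounded sets from Lemma \ref{lems:1} to absorb the exponential prefactor and the factor $\Vert r^1\Vert_{W^{s'+1,2}_\bx}$ into a constant $C=C(\text{data})$, yields
\begin{align*}
\sup_{0<t<T}\Vert(r^1-r^2,\bu^1-\bu^2)\Vert_{W^{s',2}_\bx}^2+\int_0^T\Vert\bu^1-\bu^2\Vert_{W^{s'+1,2}_\bx}^2\dt\leq C\,T\,\Big(\eta+c_\eta\Big)\,\Vert\widetilde\psi^1-\widetilde\psi^2\Vert_{X^{s'}}^2,
\end{align*}
where I used $\int_0^T(\cdot)\dt\leq T\sup_{(0,T)}(\cdot)$ plus $\int_0^T\Vert\widetilde\psi^1-\widetilde\psi^2\Vert_{W^{s',2}_\bx H^1_M}^2\dt\leq\Vert\widetilde\psi^1-\widetilde\psi^2\Vert_{X^{s'}}^2$; more carefully, the $\eta$-term does not pick up a factor $T$, so one keeps it as a standalone $C\eta\Vert\widetilde\psi^1-\widetilde\psi^2\Vert_{X^{s'}}^2$ and the $c_\eta$-term carries the factor $T$. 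Then I would insert the resulting bound on $\int_0^T\Vert\bu^1-\bu^2\Vert_{W^{s'+1,2}_\bx}^2\dt$ into the right-hand side of \eqref{f4t} from Theorem \ref{thm:main'FP}, again absorbing the exponential and the factor $\Vert(\psi^1,\psi^2)\Vert_{W^{s'+1,2}_\bx L^2_M}^2\leq\mathfrak R^2$-type quantities into constants (legitimate because $\psi^i\in\mathfrak B^s_{\mathfrak R}$ and $s'+1\le s$), to conclude
\begin{align*}
\Vert\mathfrak T\widetilde\psi^1-\mathfrak T\widetilde\psi^2\Vert_{X^{s'}}^2=\Vert\psi^1-\psi^2\Vert_{X^{s'}}^2\leq C'\big(\eta+c_\eta T\big)\Vert\widetilde\psi^1-\widetilde\psi^2\Vert_{X^{s'}}^2.
\end{align*}
Finally one first picks $\eta$ so small that $C'\eta\le\tfrac14$, then shrinks $T$ (keeping the $\mathfrak R$ from Lemma \ref{lems:1}, which only forces $T$ smaller, not larger) so that $C'c_\eta T\le\tfrac14$; the contraction constant is then $\le\tfrac12$.

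The main obstacle I anticipate is the bookkeeping of which norms of the ``frozen'' solutions appear on the right-hand sides of \eqref{est:thm:main'} and \eqref{f4t} and verifying that every such norm is genuinely controlled by the data via Lemma \ref{lems:1} and Corollary \ref{cor:main} uniformly on $\mathfrak B^s_{\mathfrak R}$ — in particular that $\Vert r^1\Vert_{W^{s'+1,2}_\bx}$ with $s'+1\le s$, $\Vert(\bu^1,\bu^2)\Vert_{W^{s'+2,2}_\bx}$ with $s'+2\le s+1$, and the $\int_0^T$ of these, are finite and bounded in terms of the data (the remark after Theorem \ref{thm:main'} confirms $s'+2\le s+1$ is exactly the threshold regularity available). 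The only genuinely delicate analytic point is the singular-weight estimate for $\mathbb T^1-\mathbb T^2$; everything else is a matter of composing the two Gronwall-type inequalities already established and exploiting that the $X^{s'}$-norm is weaker than the $X^s$-norm in which boundedness was shown, so the loss of one derivative in the difference estimates is harmless.
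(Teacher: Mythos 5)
Your proposal is correct and follows essentially the same route as the paper: chain the difference estimate of Theorem \ref{thm:main'} (for the fluid part, with $\mathbb T^i=\mathbb T(\widetilde\psi^i)$) with that of Theorem \ref{thm:main'FP}, absorb the exponential prefactors and the norms of the frozen solutions via Lemma \ref{lems:1} and Corollary \ref{cor:main}, convert $\Vert\mathbb T(\widetilde\psi^1)-\mathbb T(\widetilde\psi^2)\Vert_{W^{s',2}_\bx}$ into the $\widetilde\psi$-difference through Lemma \ref{lem:A1} with a small parameter in front of the $H^1_M$-part, and finally shrink $T$ so that the $L^2_M$-part (which carries the factor $T$) is small. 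The only cosmetic difference is that you apply Lemma \ref{lem:A1} at the start and fix $s'=s-1$, whereas the paper applies it at the end for general $s'\le s-1$; this does not change the argument.
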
 
\begin{proof}
Let $(r^1,\bu^1)$ and $(r^2,\bu^2)$ be two strong solutions to \eqref{E3}--\eqref{E4}
with data $(r_0,\vu_0,\bff,\mathbb T(\widetilde{\psi}^1))$ and $(r_0,\vu_0,\bff,\mathbb T(\widetilde{\psi}^2))$ respectively.
We obtain from Theorem \ref{thm:main'}
\begin{align*}
\int_0^T \Vert \mathbf{u}^{1}-\bu^2   \Vert^2_{W^{s'+1,2}_\mathbf{x}}\, \mathrm{d}t
&
\leq\,c
\exp\bigg(c\int_0^T \Big( 
\Vert (r^1,r^2)  \Vert^2_{W^{s'+1,2}_\mathbf{x}}
+ \Vert (\mathbf{u}^1,\mathbf{u}^2)   \Vert_{W^{s'+2,2}_\mathbf{x}}
+
\Vert   \mathbb T(\widetilde{\psi}^2)\Vert_{W^{s',2}_\mathbf{x}}^2+1\Big)
\, \mathrm{d}t \bigg)\\
&\qquad\qquad\times
\int_0^T\Vert   \mathbb T(\widetilde{\psi}^1)-\mathbb T(\widetilde{\psi}^2)\Vert_{W^{s',2}_\mathbf{x}}^2
 \Vert r^1 \Vert_{W^{s'+1,2}_\mathbf{x}}
\, \mathrm{d}t\\
&\leq\,c(\mathfrak R)\int_0^T\Vert   \mathbb T(\widetilde{\psi}^1)-\mathbb T(\widetilde{\psi}^2)\Vert_{W^{s',2}_\mathbf{x}}^2
\, \mathrm{d}t 
\end{align*}
using also Lemma \ref{lems:1} (together with Lemma \ref{lem:A1}) and Corollary \ref{cor:main}. 
On the other hand, let $\psi^1$ and $\psi^2$ be two  solutions to \eqref{eq:FP}
with data $(\psi_0,\bu^1)$ and $(\psi_0,\bu^2)$ respectively. Then Theorem \ref{thm:main'FP} tells us that
\begin{align*}
\sup_{0<t<T}&\Vert\psi^1-\psi^2\Vert^2_{W^{s',2}_\mathbf{x}L^2_M}
+
\int_0^T \Vert\psi^1-\psi^2\Vert^2_{W^{s',2}_\mathbf{x}H^1_M} \mathrm{d}t
\\&
\leq\,c
\exp\bigg(c\int_0^T \Vert (\mathbf{u}^1,\mathbf{u}^2)   \Vert_{W^{s'+1,2}_\mathbf{x}}^2
\, \mathrm{d}t \bigg)\int_0^T\Vert \bu^{1}-\bu^2  \Vert^2_{W^{s'+1,2}_\bx}
\Vert(\psi^1,\psi^2)\Vert^2_{W^{s'+1,2}_\mathbf{x}L^2_M} \mathrm{d}t \\
&\leq\,c(\mathfrak R)\int_0^T\Vert \bu^{1}-\bu^2  \Vert^2_{W^{s'+1,2}_\bx}\mathrm{d}t
\end{align*}
using again Lemma \ref{lems:1} and Corollary \ref{cor:main}. Combining both shows that
\begin{align*}
\sup_{0<t<T}&\Vert\psi^1-\psi^2\Vert^2_{W^{s',2}_\mathbf{x}L^2_M}
+
\int_0^T \Vert\psi^1-\psi^2\Vert^2_{W^{s',2}_\mathbf{x}H^1_M} \mathrm{d}t
\leq\,c(\mathfrak R)\int_0^T\Vert   \mathbb T(\widetilde{\psi}^1)-\mathbb T(\widetilde{\psi}^2)\Vert_{W^{s',2}_\mathbf{x}}^2
\, \mathrm{d}t.
\end{align*}
Finally, we infer from Lemma \ref{lem:A1} (with a suitable choice of $\delta$)
\begin{align*}
\sup_{0<t<T}&\Vert\psi^1-\psi^2\Vert^2_{W^{s',2}_\mathbf{x}L^2_M}
+
\int_0^T \Vert\psi^1-\psi^2\Vert^2_{W^{s',2}_\mathbf{x}H^1_M} \mathrm{d}t\\
&\leq\,c(\mathfrak R)T\sup_{0<t<T}\Vert    \widetilde{\psi} ^1 - \widetilde{\psi}^2\Vert_{W^{s',2}_\mathbf{x}L^2_M}^2
+\frac{1}{2}\int_0^T\Vert   \widetilde{\psi}^1 - \widetilde{\psi}^2\Vert_{W^{s',2}_\mathbf{x}H^1_M}^2
\, \mathrm{d}t.
\end{align*}
The claim follows provided we choose $T$ small enough to guarantee $c(\mathfrak R)T\leq\frac{1}{2}$.
\end{proof}

\begin{proof}[Proof of Theorem \ref{thm:aux}]
The claim of Theorem \ref{thm:aux} follows from Lemma \ref{lems:2} due to Banach's fixed point theorem.
\end{proof}

\begin{proof}[Proof of Theorem \ref{thm:main0}]
We have proved Theorem \ref{thm:aux}.
Assume that the data $(\varrho_0, \mathbf{u}_0, \psi_0, \mathbf{f})$ satisfy the hypothesis of Theorem \ref{thm:main0}. Setting $r_0:=\sqrt{\frac{2a\gamma}{\gamma -1}} \varrho_0^{\frac{\gamma-1}{2}}$ we see that the initial data $(r_0, \mathbf{u}_0, \psi_0, \mathbf{f})$ satisfy the assumptions
of Theorem \ref{thm:aux} (in particular, $r_0$ is strictly positive). We obtain a unique solution $(r,\bfu,\psi)$ to \eqref{contEquR}--\eqref{fokkerPlankR} with positive $r$.
Using the transformation 
\begin{align*}
\varrho :=\bigg( \frac{\gamma-1}{2a\gamma} \bigg)^\frac{1}{\gamma -1 } r^\frac{2}{\gamma-1}
\end{align*}
it is now straightforward to see that $(\varrho,\bfu,\psi)$ is the unique  solution  to \eqref{contEq}--\eqref{fokkerPlank} defined in the same existence interval.
\end{proof}

A consequence of the proof of Theorem \ref{thm:main} is the result below. It gives a blowup criterion for the Cauchy problem of the Navier--Stokes--Fokker--Planck system.
\begin{corollary}
\label{cor:blowup}
Let $(\varrho, \mathbf{u}, \psi)$ be a solution to problem \eqref{contEq}--\eqref{fokkerPlank}
in the sense of Definition \ref{def:strsolmartRho} on the interval $[0,T_{\max})$ with the data $(\varrho_0, \mathbf{u}_0, \psi_0, \mathbf{f})$.
If $T_{\max}<\infty$ is the maximal existence time, then
\begin{align*}
\limsup_{T \rightarrow T_{\max}}\bigg[
\Vert \bu \Vert_{W^{2,\infty}_\bx}
+
\Vert \mathrm{div}_\bx \mathbb{T}(\psi) \Vert_{L^\infty_\bx} \bigg]=\infty.
\end{align*} 
\end{corollary}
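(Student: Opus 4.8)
The plan is to argue by contradiction via a continuation criterion. Suppose $T_{\max}<\infty$ but nonetheless
\[
M:=\sup_{0\le t<T_{\max}}\Big[\,\|\bu(t)\|_{W^{2,\infty}_\bx}+\|\divx\mathbb T(\psi(t))\|_{L^\infty_\bx}\,\Big]<\infty .
\]
I will show that under this hypothesis all the quantities needed to restart the local existence theorem at a time close to $T_{\max}$ — namely $\|(\varrho(t),\bu(t))\|_{W^{s,2}_\bx}$, $\|\psi(t)\|_{W^{s,2}_\bx L^2_M}$, and a positive lower bound for $\varrho(t)$ — stay bounded as $t\uparrow T_{\max}$. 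Since the existence time produced by Theorem \ref{thm:main0} (equivalently Theorem \ref{thm:aux}) depends only on (upper bounds for) these norms and on $\inf\varrho$, restarting from $t^\ast$ with $T_{\max}-t^\ast$ smaller than that existence time extends the solution past $T_{\max}$, contradicting maximality; hence the bracketed expression must have infinite $\limsup$ as $T\to T_{\max}$.

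First I would pass to the $(r,\bu)$-formulation \eqref{contEquR}--\eqref{fokkerPlankR} (recall $\varrho>0\iff r>0$) and use the transport structure. Because $r$ solves \eqref{contEquR}, the maximum/minimum principles \eqref{maxPrinciple}--\eqref{maxPrinciple2} hold with $\|\nabx\bu\|_{L^\infty_\bx}$ and $\|\bu\|_{W^{2,\infty}_\bx}$ replaced by $M$, so there is $K=K(M,T_{\max},r_0)$ with
\[
K^{-1}\le r(t,\bx)\le K,\qquad |\nabx r(t,\bx)|\le K\qquad\text{for all }0\le t<T_{\max},
\]
whence $\varrho$ is bounded away from $0$ and $\infty$ and \eqref{lipschitzDR}--\eqref{dOfr} hold with this $K$. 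This already disposes of the positivity requirement; it remains to bound the Sobolev norms.

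Next I would rerun the higher-order a priori estimates of Sections \ref{sec:exisNS} and \ref{sec:exisFokkerPlanck} for the actual solution on $[0,t]$, $t<T_{\max}$. On the fluid side, every coefficient multiplying $\|(r,\bu)\|^2_{W^{s,2}_\bx}$ in \eqref{conEqMultiIndex2} and \eqref{k2est}--\eqref{k6est} is a function of $\|\bu\|_{W^{1,\infty}_\bx}$, $\|r\|_{W^{1,\infty}_\bx}$, $\|\divx\mathbb S(\nabx\bu)\|_{L^\infty_\bx}$ and $\|\nabx D(r)\|_{L^\infty_\bx}$; since $\divx\mathbb S(\nabx\bu)$ is second order in $\bu$ and $\|\mathbb S(\nabx\bu)\|_{L^\infty}\lesssim\|\bu\|_{W^{2,\infty}_\bx}$, all of these are $\le c(M,K)$. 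For the elastic stress one has, exploiting $b>2$ (Cauchy--Schwarz in $\bq$ against the weight $M/(b-|\bq|^2)^2$, which is integrable precisely when $b>2$; alternatively via Lemma \ref{lem:A1}), $\|\mathbb T(\psi)\|_{W^{s,2}_\bx}\le c_b\|\psi\|_{W^{s,2}_\bx L^2_M}$, and the commutator $[\partial^\alpha_\bx,D(r)]\divx\mathbb T(\psi)$ is bounded by $\|\nabx D(r)\|_{L^\infty_\bx}\|\mathbb T(\psi)\|_{W^{s,2}_\bx}+\|\divx\mathbb T(\psi)\|_{L^\infty_\bx}\|D(r)\|_{W^{s,2}_\bx}$, which is where the second blow-up quantity enters. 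This yields
\[
\tfrac{\dd}{\dd t}\|(r,\bu)\|^2_{W^{s,2}_\bx}+c\|\bu\|^2_{W^{s+1,2}_\bx}\le C\big(1+\|\bff\|^2_{W^{s,2}_\bx}+\|(r,\bu)\|^2_{W^{s,2}_\bx}+\|\psi\|^2_{W^{s,2}_\bx L^2_M}\big),
\]
with $C=C(M,K,s,d,\gamma,b)$; and the computation of Section \ref{sec:exisFokkerPlanck}, now with the given $\bu$ and using $\|\nabx\bu\|_{L^\infty_\bx}\le M$ in all commutators, gives the Fokker--Planck counterpart
\[
\tfrac{\dd}{\dd t}\|\psi\|^2_{W^{s,2}_\bx L^2_M}+c\|\psi\|^2_{W^{s,2}_\bx H^1_M}\le C\big(1+\|\bu\|^2_{W^{s,2}_\bx}+\|\bu\|^2_{W^{s+1,2}_\bx}\big)\|\psi\|^2_{W^{s,2}_\bx L^2_M}.
\]
Combining the two as a single differential inequality for $\mathcal E:=\|(r,\bu)\|^2_{W^{s,2}_\bx}+\|\psi\|^2_{W^{s,2}_\bx L^2_M}$ together with the dissipation integral $\int_0^t(\|\bu\|^2_{W^{s+1,2}_\bx}+\|\psi\|^2_{W^{s,2}_\bx H^1_M})\,\dd\sigma$, Gronwall's lemma and the finiteness of $C$, $K$ and $\int_0^{T_{\max}}\|\bff\|^2_{W^{s,2}_\bx}$ give $\sup_{0\le t<T_{\max}}\mathcal E(t)<\infty$, which — with the density bounds — furnishes the data for a restart and so the desired contradiction.

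The step I expect to be the main obstacle is the \emph{closing} of the last inequality: the factor $\|\bu\|^2_{W^{s+1,2}_\bx}\|\psi\|^2_{W^{s,2}_\bx L^2_M}$ produced by the drift term $\divq((\nabx\bu)\bq\psi)$ is quadratic in the two energies, and since $\int_0^t\|\bu\|^2_{W^{s+1,2}_\bx}$ is itself only controlled (through the viscous regularity of the fluid equation) by $\int_0^t\|\psi\|^2_{W^{s,2}_\bx L^2_M}$, iterating the two estimates separately does not suffice. The right device is to keep the estimates \emph{coupled} and to exploit the free-energy structure: at the level of $\partial^\alpha_\bx$-derivatives the top-order coupling terms $\int D(r)\,\partial^\alpha_\bx\divx\mathbb T(\psi)\cdot\partial^\alpha_\bx\bu$ and $\int(\nabx\partial^\alpha_\bx\bu)\bq\,\partial^\alpha_\bx\psi\cdot(\cdots)$ cancel up to remainders governed solely by $\|\bu\|_{W^{2,\infty}_\bx}$, $\|r\|_{W^{1,\infty}_\bx}$ and $\|\divx\mathbb T(\psi)\|_{L^\infty_\bx}$ — which is precisely why these two $L^\infty$-type quantities, rather than higher Sobolev norms, appear in the criterion. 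Getting the bookkeeping of that cancellation and the absorption of the $\|\bu\|^2_{W^{s+1,2}_\bx}$-factor into the viscous dissipation right is the delicate part of the argument.
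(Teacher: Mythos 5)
Your overall skeleton is the same as the paper's: argue by contradiction, use the transport/maximum principle to get two-sided bounds on $r$ (hence on $\varrho$, $D(r)$ and $\nabla_\bx r$ as in \eqref{maxPrinciple}--\eqref{dOfr}) from the assumed bound on $\Vert\bu\Vert_{W^{2,\infty}_\bx}$, conclude that the a priori estimates of Sections \ref{sec:exisNS} and \ref{sec:exisFokkerPlanck} remain valid up to $T_{\max}$, and then restart via Theorem \ref{thm:main0} to contradict maximality. The paper's proof is exactly this and nothing more: it invokes the fluid estimate of Theorem \ref{thm:main} with $\mathbb T(\psi)$ treated as given data (converting $\Vert\mathbb T(\psi)\Vert_{W^{s,2}_\bx}$ into the kinetic norms through Lemma \ref{lem:A1}) and the Fokker--Planck estimate of Theorem \ref{thm:mainFP} with $\bu$ treated as given data, i.e.\ the two \emph{decoupled} estimates, and then restarts from $T_{\max}$.

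The genuine gap in your write-up is the step you yourself flag as ``the delicate part''. Your proposed mechanism for closing the coupled inequality --- a cancellation between $\int D(r)\,\partial^\alpha_\bx\divx\mathbb T(\psi)\cdot\partial^\alpha_\bx\bu\dx$ and the drag term $\int M(\nabla_\bx\partial^\alpha_\bx\bu)\bq\,\tfrac{\psi}{M}\cdot\nabla_\bq\partial^\alpha_\bx\tfrac{\psi}{M}\dq\dx$, up to remainders controlled only by $\Vert\bu\Vert_{W^{2,\infty}_\bx}$, $\Vert r\Vert_{W^{1,\infty}_\bx}$ and $\Vert\divx\mathbb T(\psi)\Vert_{L^\infty_\bx}$ --- is asserted but never carried out, and it is doubtful as stated. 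The exact stress/drag cancellation in FENE models occurs at the level of the free energy, i.e.\ when the Fokker--Planck equation is tested with the logarithmic (relative-entropy) quantity, in which case the drag term reproduces $\nabla_\bx\bu:\mathbb T(\psi)$; it does not occur for the $W^{s,2}_\bx L^2_M$-type energies used here (the test function is $\partial^\alpha_\bx\tfrac{\psi}{M}$, so the drag term involves $\nabla_\bq\partial^\alpha_\bx\tfrac{\psi}{M}$ rather than the moment $\int_B\psi\,\mathbf F\otimes\bq\dq$), and the variable coefficient $D(r)$ in the momentum equation destroys any exactness in the compressible setting. No such cancellation appears anywhere in the paper: the estimates \eqref{est_thm:main} and \eqref{eq:thm:mainFP} are derived with the other unknown frozen, and the quadratic interaction you worry about (the factor $\Vert\bu\Vert^2_{W^{s+1,2}_\bx}\Vert\psi\Vert^2_{W^{s,2}_\bx L^2_M}$ coming from \eqref{Residual1aZ}--\eqref{suptx4Z}) is simply handled through the Gronwall factor $\exp\big(c\int_0^T\Vert\bu\Vert^2_{W^{s+1,2}_\bx}\,\mathrm dt\big)$ in Theorem \ref{thm:mainFP}, with the fluid dissipation bound supplied by Theorem \ref{thm:main}. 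So either you substantiate the claimed cancellation (which I do not believe holds in this form), or you follow the paper and close the argument by quoting the two decoupled a priori estimates; as written, the pivotal inequality of your proof is missing.
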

\begin{proof}
Assume that the preamble and assumption of Corollary \ref{cor:blowup} holds true but that the conclusion is false. Then 
\begin{align*}
\limsup_{T \rightarrow T_{\max}}\bigg[
\Vert \bu \Vert_{W^{2,\infty}_\bx}
+
\Vert \mathrm{div}_\bx \mathbb{T}(\psi) \Vert_{L^\infty_\bx} \bigg]\lesssim R.
\end{align*} 
for some $R<\infty$ which in turn yield
\begin{align*}
\varrho(t,\bx) \leq  c(R) \quad \text{for all}\quad (t,\bx)\in [0,T_{\max}] \times \tor
\end{align*}
by virtue of the maximum principle for \eqref{contEq}. By the transformation \eqref{transfRhoToR}, it follows that \eqref{lipschitzDR} still hold and thus, validating the a priori estimates established in Sections \ref{sec:exisNS} and \ref{sec:exisFokkerPlanck}   on $[0,T_{\max}] $.
\\
The equivalence of \eqref{contEq}--\eqref{momEq} and \eqref{contEquR}--\eqref{momEquR} and the fact that $(\varrho, \mathbf{u}, \psi)$ satisfies the a priori estimates means that 
\begin{equation}
\begin{aligned}
&\varrho \in C \big([0,T_{\max}] ;  W^{1, s}(\tor) \big), 
\\ 
 &\mathbf{u} \in  C\big([0,T_{\max}] ;  W^{s,2} (\tor) \big)
 \cap L^2\big(0,T_{\max} ;  W^{s+1,2} (\tor)\big),
 \\ 
 &\psi \in  C\big([0,T_{\max}]; W^{s,2} (\tor; L^2_M(B)) \big)
\cap L^2\big(0,T_{\max} ; W^{s,2} (\tor; H^1_M(B)) \big).
\end{aligned}
\end{equation}
Now note that the density remains positive for a positive initial density. It means that we can take $(\varrho, \bu, \psi, \mathbf{f})$ as a new data so that   Theorem \ref{thm:main0} establishes the existence of a time $T^{\max}_{\max}>T_{\max}$ on which a solution exist. This contradicts the fact that $T_{\max}$ is maximal.\\\
\end{proof}

\centerline{\bf Acknowledgement}
\noindent{
The authors would like to thank the referee for the careful reading of the manuscript and the valuable suggestions.}\\\

%

\end{document}